\newtheorem{theorem}{Theorem}[section]
\newtheorem{lemma}[theorem]{Lemma}
\newtheorem{corollary}[theorem]{Corollary}
\newtheorem{proposition}[theorem]{Proposition}
\theoremstyle{definition}
\newtheorem{definition}[theorem]{Definition}
\theoremstyle{remark}
\newtheorem{remark}[theorem]{Remark}
\numberwithin{equation}{section}
\def\imod#1{\allowbreak\mkern5mu({\operator@font mod}\,\,#1)}
\newcommand{\DS}{\displaystyle}
\begin{document}

\title[Gr\"obner bases and immersion theorems for Grassmannians $G_{3,n}$]{Gr\"obner bases and some immersion theorems for Grassmann manifolds $G_{3,n}$}

%    Information for first author
\author{Zoran Z. Petrovi\'c}
%    Address of record for the research reported here
\address{University of Belgrade,
  Faculty of mathematics,
  Studentski trg 16,
  Belgrade,
  Serbia}
%    Current address
%\curraddr{University of Belgrade,
 % Faculty of mathematics,
 % Studentski trg 16,
 % Belgrade,
 % Serbia}
\email{zoranp@matf.bg.ac.rs}
%    \thanks will become a 1st page footnote.
\thanks{The first author was partially supported by Ministry of Science and Environmental Protection of Republic of Serbia Project \#174032.}

%    Information for second author
\author{Branislav I. Prvulovi\'c}
\address{University of Belgrade,
  Faculty of mathematics,
  Studentski trg 16,
  Belgrade,
  Serbia}
\email{bane@matf.bg.ac.rs}
\thanks{The second author was partially supported by Ministry of Science and Environmental Protection of Republic of Serbia Project \#174034.}

%    General info
\subjclass[2000]{Primary 57R42, 13P10, 55S45, 55R40}

%\date{January 1, 2001 and, in revised form, June 22, 2001.}

%\dedicatory{This paper is dedicated to our advisors.}

\keywords{Grassmannians, immersions, Gr\"obner bases, modified Postnikov towers}

\begin{abstract}
A Gr\"obner basis for the ideal determining mod $2$ cohomology
of Grassmannian $G_{3,n}$ is obtained. This is used, along with the method of obstruction theory, to establish some new immersion results for these manifolds.
\end{abstract}

\maketitle

%\section*{This is an unnumbered first-level section head}
%This is an example of an unnumbered first-level heading.

%% The correct journal style for \specialsection is all uppercase; a known bug
%% in amsart.cls prevents this, so input must be uppercase until it is fixed.
%\specialsection*{This is a Special Section Head}
%\specialsection*{THIS IS A SPECIAL SECTION HEAD}
%This is an example of a special section head%
%%%%%%%%%%%%%%%%%%%%%%%%%%%%%%%%%%%%%%%%%%%%%%%%%%%%%%%%%%%%%%%%%%%%%%%%
%\footnote{Here is an example of a footnote. Notice that this footnote
%text is running on so that it can stand as an example of how a footnote
%with separate paragraphs should be written.
%\par
%And here is the beginning of the second paragraph.}%
%%%%%%%%%%%%%%%%%%%%%%%%%%%%%%%%%%%%%%%%%%%%%%%%%%%%%%%%%%%%%%%%%%%%%%%%
%.

\section{Introduction}
\label{intro}

The theory of Gr\"obner bases is one of the most powerful tools for deciding whether a certain polynomial in two or more variables belongs to a given ideal. An example where this problem is of particular interest is the mod $2$ cohomology algebra of Grassmann manifold $G_{k,n}=O(n+k)/O(n)\times O(k)$. By Borel's description, this algebra is just the polynomial algebra on the Stiefel-Whitney classes $w_{1},w_{2},\dots,w_{k}$ of the canonical vector bundle $\gamma_{k}$ over $G_{k,n}$ modulo the ideal $I_{k,n}$ generated by the dual classes $\overline{w}_{n+1},\overline{w}_{n+2},\dots,\overline{w}_{n+k}$.

A reduced Gr\"obner basis for the ideal $I_{2,n}$ has been obtained in \cite{Petrovic}. Based on that result for odd $n$, some new immersions of
Grassmannians $G_{2,2l+1}$ were established.

In this paper, we construct a reduced Gr\"obner basis for the ideal
$I_{3,n}$ for all $n$. This result is stated in Theorem \ref{t3}. In Corollary \ref{c2} we present a convenient vector space basis for $H^{*}(G_{3,n};\mathbb{Z}_{2})$.

In Section \ref{immer} we consider the immersion dimension of Grassmanians $G_{3,n}$ (which is defined by $\mathrm{imm}(G_{3,n}):=\min \{d\mid G_{3,n}\mbox{ immerses into } \mathbb{R}^{d}\}$). Some lower bounds for $\mathrm{imm}(G_{3,n})$ were established by Oproiu in \cite{Oproiu} where he used the method of the Stiefel-Whitney classes. From the general result of Cohen (\cite{Cohen}), one has an upper bound for $\mathrm{imm}(G_{3,n})$ and it seems that there has been no improvement of this result up till now.

Using the Gr\"obner basis and modified Postnikov towers, we get the following new immersion results.

\begin{theorem}\label{theorem2} If $n\equiv 0\imod 4$, then $G_{3,n}$ immerses into $\mathbb{R}^{6n-3}$.
\end{theorem}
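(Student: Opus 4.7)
The plan is to invoke Hirsch's immersion theorem: since $\dim G_{3,n}=3n$, producing an immersion into $\mathbb{R}^{6n-3}$ is equivalent to exhibiting a $(3n-3)$-plane bundle $\xi$ with $\xi\oplus\tau(G_{3,n})\cong\epsilon^{6n-3}$, which in turn is equivalent to lifting the classifying map of the stable normal bundle $\nu$ through $BO(3n-3)\to BO$. From the standard Whitney-sum identity $\tau(G_{3,n})\oplus(\gamma_{3}\otimes\gamma_{3})=(n+3)\gamma_{3}$ one obtains $w(\nu)=w(\gamma_{3}\otimes\gamma_{3})\cdot w(\gamma_{3})^{-(n+3)}$, and for $n\equiv 0\imod 4$ the exponent $n+3\equiv 3\imod 4$ produces a particularly clean expression for the high-degree dual Stiefel--Whitney classes $\overline{w}_{i}$.

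First I would verify the primary obstructions: compute $\overline{w}_{i}$ for every $i>3n-3$, reduce modulo $I_{3,n}$ by the division algorithm against the Gr\"obner basis of Theorem \ref{t3}, and check that each remainder is zero. This step is purely mechanical once the basis is in hand; it is exactly the ``polynomial ideal membership'' problem that the Gr\"obner-basis machinery is designed to solve. The outcome is that $\nu$ lifts into the first nontrivial stage of a modified Postnikov tower for $BO(3n-3)\to BO$, reducing the problem to showing that the higher $k$-invariant obstructions vanish.

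Next I would work through the modified Postnikov tower (in the style of Mahowald--Gitler and the methods used in related Grassmannian immersion papers) for $BO(3n-3)\to BO$, truncated in the range up to dimension $3n$. The remaining obstructions live in $H^{3n-1}(G_{3,n};\mathbb{Z}_{2})$ and $H^{3n}(G_{3,n};\mathbb{Z}_{2})$ and are the values, on $\overline{w}$, of stable secondary (and, if required, tertiary) cohomology operations built from Steenrod-square relations $\sum\alpha_{j}\,Sq^{I_{j}}=0$. By Corollary \ref{c2} these top-degree groups have small, explicit monomial bases, so each obstruction becomes a specific polynomial in $w_{1},w_{2},w_{3}$ that can be reduced to normal form against the Gr\"obner basis.

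The main obstacle will be bookkeeping the indeterminacies of the secondary operations: one must either show the obstruction class is independent of the choice of coset representative, or use the already-constructed lifts to absorb the ambiguity. Here is precisely where the hypothesis $n\equiv 0\imod 4$ does substantial work, killing parity-sensitive terms (notably those carrying odd powers of $w_{1}^{4}$-multiples) and forcing the secondary obstructions to reduce to $\mathbb{Z}_{2}$-combinations of elements already present in the Gr\"obner basis of Theorem \ref{t3}. Once the reductions are carried out and the obstructions are shown to vanish, the desired lift of $\nu$ through $BO(3n-3)$ exists and Hirsch's theorem delivers the immersion $G_{3,n}\hookrightarrow\mathbb{R}^{6n-3}$.
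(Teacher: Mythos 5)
Your outline follows the same broad strategy as the paper---Hirsch's theorem, a modified Postnikov tower for $BO(3n-3)\to BO$ in the style of Gitler--Mahowald (with Nussbaum for the odd fibre dimension), and Gr\"obner-basis reduction to decide vanishing in $H^{*}(G_{3,n};\mathbb{Z}_{2})$. However, at the two places where the argument actually has to close, the proposal stays at the level of a plan and, where it does commit to a concrete claim, gets the mechanism wrong.

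First, the paper does not show the $k$-invariant obstructions are zero for a fixed lift; it shows the \emph{indeterminacy} of each $k$-invariant is the whole of the relevant top-degree group, so any lift can be modified to kill the obstruction. Concretely, the crucial computations are $(Sq^{2}+w_{2}(\nu))(w_{3}^{n-1})=w_{1}^{2}w_{3}^{n-1}=w_{2}w_{3}^{n-1}\neq 0$ (via $g_{0,n-1}$ in the Gr\"obner basis) in degree $3n-1$, and $Sq^{1}(w_{2}w_{3}^{n-1})=w_{3}^{n}\neq 0$ in degree $3n$. Since $H^{3n-1}$ and $H^{3n}$ are each one-dimensional, this makes the indeterminacy total at both the $E_{1}\to E_{2}$ stage and the $E_{2}\to E_{3}$ stage, and no normal-form calculation of the obstruction classes themselves is needed. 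You list ``absorb the ambiguity using the lift'' as one of two options, but the proposal then pivots to reducing the obstruction to normal form and checking it is zero, which is not what happens and would not in general succeed.

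Second, the role of $n\equiv 0\pmod 4$ is misidentified. It is not about ``killing odd powers of $w_{1}^{4}$-multiples.'' It is used (i) to get $2^{r+1}-n-3\equiv 1\pmod 4$, which reads off cleanly as $w_{2}(\nu)=w_{2}$ from the formula $w(\nu)=(1+w_{1}^{4}+w_{2}^{2}+w_{1}^{2}w_{2}^{2}+w_{3}^{2})(1+w_{1}+w_{2}+w_{3})^{2^{r+1}-n-3}$, (ii) to ensure via a degree count that $w_{i}(\nu)=0$ for $i\geq 3n-2$ so the primary obstructions vanish, and (iii) to evaluate the binomial coefficients in the Wu/Cartan formulas, e.g.\ $\binom{n-1}{2}\equiv 1$, $n-1\equiv 1$, $n\equiv 0\pmod 2$, which make the two displayed Steenrod-square computations come out as stated. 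Also note a small but not harmless imprecision in your first obstruction step: what must vanish are the normal Stiefel--Whitney classes $w_{i}(\nu)$ for $i\in\{3n-2,3n\}$ (the degrees appearing in the tower), not ``$\overline{w}_{i}$ for every $i>3n-3$''; these are related but the paper establishes the former directly from the closed formula for $w(\nu)$, not by a Gr\"obner reduction of $\overline{w}_{i}$.
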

This theorem improves Cohen's result whenever $\alpha (3n)=2$ (where $\alpha (3n)$ denotes the number of ones in the binary expansion of $3n$). In particular, consider the case $n=2^{r}$, $r\geq 2$. By the result of Oproiu (\cite{Oproiu}), $\mathrm{imm}(G_{3,2^{r}})\geq 6\cdot 2^{r}-3$ and by Theorem \ref{theorem2}, $\mathrm{imm}(G_{3,2^{r}})\leq 6\cdot 2^{r}-3$, so
\[\mathrm{imm}(G_{3,2^{r}})=6\cdot 2^{r}-3.\]
Also, if $n=2^{r}+\DS\sum_{j=0}^{s}2^{r+1+2j}=2^{r}+2^{r+1}\cdot \frac{2^{2s+2}-1}{3}$ for some $r\geq 2$ and $s\geq 0$, we have that $3n=2^{r}+2^{r+2s+3}$, so $\alpha (3n)=2$.
Therefore, if $n$ is of this form, Theorem \ref{theorem2} decreases the upper bound for $\mathrm{imm}(G_{3,n})$ by one.

\begin{theorem}\label{theorem3} If $n\equiv 6\imod 8$, then $G_{3,n}$ immerses into $\mathbb{R}^{6n-5}$.
\end{theorem}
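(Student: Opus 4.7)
The plan is to prove Theorem \ref{theorem3} by the same method used for Theorem \ref{theorem2}: reduce the immersion problem to a lifting problem via Hirsch's theorem, and then attack the resulting sequence of obstructions using the modified Postnikov tower of the fibration $BO(3n-5)\to BO$ together with the Gr\"obner basis for $I_{3,n}$ provided by Theorem \ref{t3}.

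More concretely, an immersion of $G_{3,n}$ into $\mathbb{R}^{6n-5}$ exists if and only if the classifying map $\nu\colon G_{3,n}\to BO$ of the stable normal bundle lifts through $BO(3n-5)$. First I would compute the dual Stiefel--Whitney classes $\overline{w}_{j}$ of $\nu$ as polynomials in $w_{1},w_{2},w_{3}$ by formally inverting the total Stiefel--Whitney class $w(\tau)$ of the tangent bundle (itself expressible in terms of $w(\gamma_{3})$ and of $w(\gamma_{n}^{\perp})=1+\overline{w}_{1}+\overline{w}_{2}+\cdots$), and then reduce these expressions modulo $I_{3,n}$ using the Gr\"obner basis of Theorem~\ref{t3}. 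The primary obstruction to the lift lives in $H^{3n-4}(G_{3,n};\mathbb{Z}_{2})$ and is exactly $\overline{w}_{3n-4}$ of $\nu$; the key initial step is to verify that for $n\equiv 6\imod 8$ this class vanishes after Gr\"obner reduction, with Corollary~\ref{c2} providing a vector-space basis in which vanishing can be read off unambiguously.

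Higher obstructions correspond to the $k$-invariants of the modified Postnikov tower, each of which is a secondary (or, at higher stages, tertiary) cohomology operation applied to lower-stage classes. For each such obstruction I would (i) identify the relevant secondary operation coming from a Steenrod-algebra relation $\sum a_{i}\,\mathrm{Sq}^{b_{i}}=0$, (ii) compute the action of the required $\mathrm{Sq}^{b_{i}}$ on the Gr\"obner normal forms of the previously produced classes, and (iii) verify, modulo the indeterminacy of the operation, that the resulting polynomial reduces to zero in $\mathbb{Z}_{2}[w_{1},w_{2},w_{3}]/I_{3,n}$. The monomial basis of Corollary~\ref{c2} is again indispensable here, both for detecting zero and for describing the indeterminacy submodules.

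The main obstacle will be the analysis of the higher $k$-invariants and the control of the indeterminacy of the secondary operations that appear: one must show not only that each obstruction polynomial reduces to zero modulo $I_{3,n}$, but that it does so within the correct coset of indeterminacy, and that no intermediate obstruction in the range $[3n-4,\,3n]$ is missed. A secondary difficulty is the sheer size of the polynomials, which have degree close to $3n$; however, the Gr\"obner basis renders the reductions algorithmic, and the arithmetic consequences of the hypothesis $n\equiv 6\imod 8$ on the relevant binomial coefficients should provide enough cancellation to make the final verifications tractable.
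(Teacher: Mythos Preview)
Your overall strategy---Hirsch's theorem, the modified Postnikov tower for $BO(3n-5)\to BO$, Gr\"obner reduction of the obstruction classes---is exactly the paper's approach. However, the proposal as stated has two genuine gaps.

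First, a minor one: the primary obstruction is not just $w_{3n-4}(\nu)$. The fibre of $BO(3n-5)\to BO$ is the stunted projective space $V_{\infty,3n-5}\simeq \mathbb{R}P^{\infty}/\mathbb{R}P^{3n-6}$, and in the range up to $3n$ its first two nonzero homotopy groups force \emph{two} primary obstructions, $w_{3n-4}(\nu)$ and $w_{3n-2}(\nu)$. Both must be shown to vanish (and for $n=2^{r}-2$ this takes real work with the Gr\"obner basis, not just degree counting).

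Second, and more seriously, your step (ii)--(iii) for the higher obstructions is not well-posed. The class $g_{1}^{*}(k_{1}^{2})\in H^{3n-3}$ is \emph{not} a computable polynomial in $w_{1},w_{2},w_{3}$: it depends on the particular lifting $g_{1}$, and there is no canonical choice. You therefore cannot ``compute its Gr\"obner normal form and check it is zero.'' What the paper does instead is pull back the relation that defines the \emph{next} $k$-invariant, namely $(Sq^{2}+w_{2})k_{1}^{2}+Sq^{1}k_{2}^{2}=0$, to conclude that $g_{1}^{*}(k_{1}^{2})$ lies in the kernel of $(Sq^{2}+w_{2}(\nu))\colon H^{3n-3}\to H^{3n-1}$; then a separate Gr\"obner computation shows this kernel is contained in the image of the indeterminacy map $(Sq^{2}+w_{2}(\nu))\colon H^{3n-5}\to H^{3n-3}$. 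This kernel-in-image argument is the heart of the proof, and your outline does not account for it. A similar interplay (varying $\beta$ by a carefully chosen class in $\ker Sq^{1}$ to fix $k_{3}^{2}$ without disturbing $k_{2}^{2}$) is needed to kill all three second-stage obstructions simultaneously.
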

The best improvement of the general Cohen's result (\cite{Cohen}) obtained from Theorem \ref{theorem3} is in the case $n=2+\DS\sum_{j=1}^{s}2^{2j}$, $s\geq 1$. Then $3n=2+2^{2s+2}$ and so we are able to decrease the upper bound for $\mathrm{imm}(G_{3,n})$ by $3$. For example, by this theorem and Oproiu's result, we have that $29\leq \mathrm{imm}(G_{3,6})\leq 31$.

\begin{theorem}\label{thm1} If $n\geq 3$ and $n\equiv 1\imod 8$, then $G_{3,n}$ immerses into $\mathbb{R}^{6n-6}$.
\end{theorem}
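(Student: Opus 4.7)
\medskip
\noindent\textbf{Proof plan.} As in Theorems \ref{theorem2} and \ref{theorem3}, by Hirsch's theorem an immersion $G_{3,n}\hookrightarrow\mathbb{R}^{6n-6}$ is equivalent to a lift of the classifying map of the stable normal bundle $\nu:G_{3,n}\to BO$ through $BO(3n-6)\to BO$. The plan is to obstruct this lifting via the modified Postnikov tower (MPT) of $BO(3n-6)\to BO$, built up through the top dimension $3n$.

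First, using the Gr\"obner basis of Theorem \ref{t3} and the vector space basis of Corollary \ref{c2}, I would express the Stiefel--Whitney classes of $\nu$ in the range $3n-5\leq i \leq 3n$ as reduced polynomials in $w_1,w_2,w_3$, and verify that the primary obstruction $w_{3n-5}(\nu)$ vanishes modulo $I_{3,n}$ under the hypothesis $n\equiv 1\imod 8$. The arithmetic needed here concerns the binomial coefficients appearing in the formal inverse of $1+w_1+w_2+w_3$ raised to the appropriate power; their parities, determined via Kummer's theorem, depend on the $8$-adic structure of $n$, and it is at this stage that the congruence modulo $8$ enters for the first time.

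Next, I would analyze the MPT of $BO(3n-6)\to BO$ in the range of dimensions $[3n-5,3n]$. The relevant $k$-invariants are built from $Sq^1,Sq^2,Sq^4$ together with a small number of stable indecomposable secondary (and possibly tertiary) cohomology operations, as described in the systematic construction of Gitler--Mahowald and Nussbaum. At each stage of the tower the obstruction to lifting can be identified with the image of a previous lift under such an operation, and what must be verified is that this obstruction lies in the indeterminacy of the operation, i.e., is realizable by a class of lower filtration. Each such verification is reduced by the Gr\"obner basis to a finite polynomial identity in $\mathbb{Z}_2[w_1,w_2,w_3]/I_{3,n}$, made tractable by the explicit normal-form reduction provided by Theorem \ref{t3}.

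The hardest part will be the top-dimensional obstruction, which lives in $H^{3n}(G_{3,n};\mathbb{Z}_2)\cong\mathbb{Z}_2$ and therefore cannot be killed for dimensional reasons alone; it requires an explicit higher-operation relation. Its treatment will demand a careful choice of lift at the penultimate stage of the MPT so that a designated secondary operation, evaluated on the resulting class, lands in a nontrivial coset known to be in the image of that operation. The congruence $n\equiv 1\imod 8$ is used precisely to guarantee this coset structure: it forces a specific pattern of vanishings among the $\binom{n+3}{j}$, which in turn forces the obstruction into the required indeterminacy. This final step is the technical heart of the proof; once it is carried out, the tower lifts and the desired immersion follows.
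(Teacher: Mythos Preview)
Your plan follows the same architecture as the paper's proof---Hirsch's theorem, a $3n$-truncated MPT, and Gr\"obner-basis reductions---but it misidentifies where the work lies and misses one simplification the paper exploits.

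First, since $n$ is odd, $n+3$ is even and $G_{3,n}$ is orientable; the paper lifts $f_\nu:G_{3,n}\to BSO$ through $BSO(3n-6)\to BSO$ rather than through $BO$. This trims the $w_1$-terms from the relations defining the $k$-invariants.

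More importantly, your expectation that ``the hardest part will be the top-dimensional obstruction, which lives in $H^{3n}(G_{3,n};\mathbb{Z}_2)$'' is mistaken for this tower. The $3n$-MPT for $BSO(3n-6)\to BSO$ has first-stage obstructions $w_{3n-5},w_{3n-3}$, second-stage $k$-invariants $k_1^2,k_2^2,k_3^2$ in degrees $3n-4,\,3n-3,\,3n-2$, and a single third-stage $k$-invariant $k_1^3$ in degree $3n-3$; there is \emph{no} obstruction in degree $3n$. Lemma~\ref{ll2}(a) already gives $w_i(\nu)=0$ for $i\ge 3n-8$, so the primary lift is immediate.

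The genuine technical heart, which your outline does not anticipate, is at the second stage: one must show that $g_1^*(k_1^2)\in H^{3n-4}$ lies in the image of $Sq^2:H^{3n-6}\to H^{3n-4}$. The paper does this indirectly: pulling back the relation $(Sq^2+w_2)k_1^2+Sq^1k_2^2=0$ forces $Sq^2(g_1^*(k_1^2))=Sq^1(g_1^*(k_2^2))$, and explicit computation of $Sq^2$ on $H^{3n-4}$ and of $Sq^1$ on $H^{3n-3}$ (Lemmas~\ref{ll3}--\ref{ll5}) shows that any class satisfying this constraint must already lie in $\mathrm{im}\,Sq^2$. One then has to kill $k_2^2$ and $k_3^2$ \emph{simultaneously} without disturbing $k_1^2$, which requires exhibiting specific classes $\alpha'\in H^{3n-6}$ and $\beta'\in H^{3n-4}$ with prescribed behaviour under $Sq^2Sq^1$, $Sq^2$, $Sq^4+w_2^2$, and $Sq^1$ (Lemmas~\ref{ll6}, \ref{ll7}). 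The final stage is easy: the indeterminacy of $k_1^3$ is all of $H^{3n-3}$ (Lemma~\ref{ll8}). So the proof is not about a single top obstruction but about arranging three coupled middle-stage obstructions to vanish together; your plan does not address this coupling.
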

This theorem improves Cohen's result whenever $\alpha (3n)<6$. For example, if $n=1+2^{r}+\DS\sum_{j=1}^{s}2^{r+2j-1}=1+2^{r}+2^{r+1}\cdot \frac{2^{2s}-1}{3}$ for some $r\geq 3$ and $s\geq 0$, we have that $3n=3+ 2^{r}+2^{r+2s+1}$, so $\alpha (3n)=4$. When $s=0$, i.e., $n=2^{r}+1$ ($r\geq 3$), by Theorem \ref{thm1} and Oproiu's result we have that $6\cdot 2^{r}-3\leq \mathrm{imm}(G_{3,2^{r}+1})\leq 6\cdot 2^{r}$.

\begin{theorem}\label{theorem4} If $n\geq 3$ and $n\equiv 2\imod 8$, then $G_{3,n}$ immerses into $\mathbb{R}^{6n-7}$.
\end{theorem}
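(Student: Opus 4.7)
The plan is to apply Hirsch's immersion theorem and reduce the question to lifting the classifying map of the stable normal bundle $\nu$ of $G_{3,n}$ through the vector-bundle fibration $BO(3n-7)\to BO$, analyzing the lift stage by stage via a modified Postnikov tower (MPT). Throughout, the reduced Gr\"obner basis of Theorem~\ref{t3} and the vector-space basis of Corollary~\ref{c2} are the main computational devices for testing when cohomology classes in $H^{*}(G_{3,n};\mathbb{Z}_{2})$ vanish.

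First I would compute the Stiefel--Whitney classes $w_{3n-6}(\nu),\ldots,w_{3n}(\nu)$. Since stably $\tau(G_{3,n})\oplus\gamma_{3}\otimes\gamma_{3}\cong(n+3)\gamma_{3}$, one has $w(\nu)=w(\gamma_{3}\otimes\gamma_{3})/w(\gamma_{3})^{n+3}$; expanding in $w_{1},w_{2},w_{3}$ and reducing modulo the Gr\"obner basis should, for $n\equiv 2\imod 8$, give zero for each of these seven classes. These are the primary obstructions to the lift, and their simultaneous vanishing secures a factorization through the first stage of the MPT.

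The bulk of the work will lie in the higher stages of the tower. The $k$-invariants of the MPT for $BO(k)\to BO$ in this codimension range are known, through the work of Gitler--Mahowald, Nussbaum, Davis and others used in earlier Grassmannian immersion problems, to be assembled from a small collection of Steenrod squares together with specific secondary operations. For each stage I would pull the $k$-invariant back along the current lift, express the result in the basis of Corollary~\ref{c2}, and reduce modulo $I_{3,n}$. Where the reduction vanishes, the next lift exists automatically; where it does not, I would need to exhibit a compensating element in the indeterminacy of the preceding lift (a suitable class modified by a Steenrod operation on $w_{1},w_{2},w_{3}$) so that the obstruction is killed by a suitable adjustment.

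The hardest part will be the tertiary stage. Since the codimension $3n-7$ is the smallest treated in this paper, one expects up to three stages of the MPT to be active, and the indeterminacies from earlier choices will interact nontrivially and have to be tracked simultaneously. The congruence $n\equiv 2\imod 8$ should enter essentially here: it governs the mod~$2$ coefficients appearing in $(1+w_{1}+w_{2}+w_{3})^{n+3}$ and in the Wu-type formulae for the secondary $k$-invariants, and it is precisely this arithmetic input, combined with the Gr\"obner reduction, that should push the final obstruction into $I_{3,n}$. Isolating and verifying this interaction between the binary expansion of $n+3$ and the tower's secondary structure is where I expect the proof to be most delicate.
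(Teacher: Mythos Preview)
Your overall strategy --- Hirsch's theorem plus lifting through an MPT for $BO(3n-7)\to BO$ using the Gr\"obner basis for reductions --- is exactly what the paper does. But two points of your outline do not match the actual tower, and one of them hides a genuine gap.

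First, the shape of the tower. Since $3n-7\equiv 7\pmod 8$, the $3n$-MPT here has a \emph{single} primary obstruction $w_{3n-6}$ (not the seven classes $w_{3n-6},\ldots,w_{3n}$), and then \emph{four} stages rather than three: after $w_{3n-6}$ one meets $k$-invariants $(k_1^2,k_2^2,k_3^2)$ in degrees $3n-4,3n-2,3n-1$, then $(k_1^3,k_2^3)$ in degrees $3n-3,3n-2$, and finally $k_1^4$ in degree $3n-2$. The paper in fact shows $w_i(\nu)=0$ for all $i\ge 3n-14$ by a pure degree count (Lemma~\ref{l27}(a)), so no Gr\"obner reduction is needed at the primary level.

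The real gap is at the second stage. You write that you would ``pull the $k$-invariant back along the current lift, express the result in the basis of Corollary~\ref{c2}, and reduce.'' But $g_1^{*}(k_1^2)\in H^{3n-4}$ is \emph{not} a computable class: it depends on the noncanonical choice of $g_1$, and you only know it up to the indeterminacy image $\mathrm{im}\,F_1$ where $F_1=(Sq^2+w_1^2+w_2)Sq^1$. The indeterminacy here is \emph{not} all of $H^{3n-4}$, so you cannot simply ``exhibit a compensating element.'' The paper's essential device is to pull back the \emph{next}-level relation $(Sq^2+w_1^2+w_2)k_1^2=0$ (the one defining $k_1^3$) to conclude that the unknown class $g_1^{*}(k_1^2)$ lies in $\ker D$, where $D=Sq^2+w_2$ on $H^{3n-4}$; one then computes $\ker D$ and $\mathrm{im}\,F_1$ explicitly (Lemmas~\ref{l28}--\ref{l29}) and checks $\ker D\subseteq\mathrm{im}\,F_1$. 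Only after that can one modify the lift to kill $k_1^2$, and then carefully adjust by classes in $\ker F_1$ to kill $k_2^2$ and $k_3^2$ without disturbing the first (Lemmas~\ref{l30}--\ref{l31}). This ``constrain the unknown obstruction by a higher relation, then show the constrained set lies in the indeterminacy image'' step is the heart of the argument and is absent from your plan. The congruence $n\equiv 2\pmod 8$ is used throughout these computations (via the values of $w_1(\nu),\ldots,w_4(\nu)$ in Lemma~\ref{l27} and the binomial coefficients in the $Sq^i$-formulas), not only at the final stage.
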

Again, there are a number of cases in which Theorem \ref{theorem4} improves previously known results. In particular, when $n=2^{r}+2$, $r\geq 3$, we have an improvement by $3$. In this case, using Oproiu's result (\cite{Oproiu}) and this theorem, we have $6\cdot 2^{r}-3\leq \mathrm{imm}(G_{3,2^{r}+2})\leq 6\cdot 2^{r}+5$.

\medskip

In addition to these main results, in Theorem \ref{t10} we use Gr\"obner bases to give a
simple proof of some of Oproiu's results concerning lower bounds for $\mathrm{imm}(G_{3,n})$ (excluding the cases $n=2^{r}-2$ and $n=2^{r}-1$).

\section{Gr\"obner bases}
\label{groebner}

Throughout this section, we denote by $\mathbb{N}_{0}$ the set of all nonnegative integers and the set of all positive integers is denoted by $\mathbb{N}$.
\medskip

Let $G_{k,n}$ be the Grassmann manifold of unoriented $k$-dimensional vector
subspaces in $\mathbb{R}^{n+k}$. It is known that the cohomology algebra $H^{*}(G_{k,n};\mathbb{Z}_{2})$
is isomorphic to the quotient $\mathbb{Z}_{2}[w_{1},w_{2},\dots ,w_{k}]/I_{k,n}$ of the polynomial algebra $\mathbb{Z}_{2}[w_{1},w_{2},\dots ,w_{k}]$ by the ideal $I_{k,n}$ generated by polynomials
$\overline{w}_{n+1},\overline{w}_{n+2},\dots ,\overline{w}_{n+k}$. These are obtained from the equation
\[(1+w_{1}+w_{2}+\dots +w_{k})(1+\overline{w}_{1}+\overline{w}_{2}+\dots )=1,\]
that is
\begin{equation}\label{f2}
1+\overline{w}_{1}+\overline{w}_{2}+\dots =\frac{1}{1+w_{1}+w_{2}+\dots +w_{k}}=\DS\sum_{t\geq 0}(w_{1}+w_{2}+\dots +w_{k})^{t}
\end{equation}
\[
=\sum_{t\geq 0}\sum_{a_{1}+\dots +a_{k}=t}[a_{1},\dots ,a_{k}]w_{1}^{a_{1}}\cdots w_{k}^{a_{k}}=\sum_{a_{1},\dots ,a_{k}\geq 0}[a_{1},a_{2},\dots ,a_{k}]w_{1}^{a_{1}}w_{2}^{a_{2}}\cdots w_{k}^{a_{k}},
\]
where $[a_{1},a_{2},\dots ,a_{k}]$ ($a_{j}\in \mathbb{N}_{0}$) denotes the multinomial coefficient,
\[[a_{1},a_{2},\dots ,a_{k}]=\tfrac{(a_{1}+a_{2}+\dots +a_{k})!}{a_{1}! a_{2}! \cdots a_{k}!}=\bigl(\begin{smallmatrix} a_{1}+a_{2}+\dots +a_{k}\\ a_{1}\end{smallmatrix}\bigr)\dots\bigl(\begin{smallmatrix}a_{k-1}+a_{k}\\a_{k-1}\end{smallmatrix}\bigr).\]
By identifying the homogenous parts of (cohomological) degree $r$ in formula (\ref{f2}), we obtain the following proposition.

%An example $\bigl( \begin{smallmatrix} 
%  a & b\\
%  c & d 
%\end{smallmatrix} \bigr)$, $\bigl(\begin{smallmatrix} a_{1}+a_{2}+\dots +a_{k}\\ a_{1}\end{smallmatrix}\bigr)$

\begin{proposition}\label{p0} For $r\in \mathbb{N}$,
\[\overline{w}_{r}=\DS\sum_{a_{1}+2a_{2}+\dots +ka_{k}=r}[a_{1},a_{2},\dots ,a_{k}]w_{1}^{a_{1}}w_{2}^{a_{2}}\cdots w_{k}^{a_{k}}.\]
\end{proposition}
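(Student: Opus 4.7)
The plan is to read off the homogeneous component of cohomological degree $r$ from the generating-function identity (\ref{f2}), keeping in mind the grading convention $\deg w_{i} = i$. All computations live in the polynomial ring $\mathbb{Z}_{2}[w_{1},\dots,w_{k}]$, so signs are irrelevant; this is what makes the geometric series expansion of $(1+w_{1}+\dots+w_{k})^{-1}$ legitimate in the form already displayed in (\ref{f2}).

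First, I would justify (as essentially already done in (\ref{f2})) that $\sum_{t\geq 0}(w_{1}+w_{2}+\dots+w_{k})^{t}$ is the formal inverse of $1+w_{1}+\dots+w_{k}$, so that equating it with $1+\overline{w}_{1}+\overline{w}_{2}+\dots$ is correct in each fixed cohomological degree (each degree involves only finitely many terms in the sum). Next, I would apply the multinomial theorem to each $(w_{1}+\dots+w_{k})^{t}$:
\[
(w_{1}+\dots+w_{k})^{t}=\sum_{a_{1}+\dots+a_{k}=t}[a_{1},\dots,a_{k}]\,w_{1}^{a_{1}}\cdots w_{k}^{a_{k}},
\]
and then interchange the order of summation. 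Since each monomial $w_{1}^{a_{1}}\cdots w_{k}^{a_{k}}$ arises from exactly one value of $t$, namely $t=a_{1}+\dots+a_{k}$, the double sum collapses to
\[
1+\overline{w}_{1}+\overline{w}_{2}+\dots=\sum_{a_{1},\dots,a_{k}\geq 0}[a_{1},a_{2},\dots,a_{k}]\,w_{1}^{a_{1}}w_{2}^{a_{2}}\cdots w_{k}^{a_{k}},
\]
which is the last line of (\ref{f2}).

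Finally, I would extract the part of cohomological degree $r$ from both sides. On the left this is precisely $\overline{w}_{r}$. On the right, the monomial $w_{1}^{a_{1}}\cdots w_{k}^{a_{k}}$ has cohomological degree $a_{1}+2a_{2}+\dots+ka_{k}$ (since $w_{i}\in H^{i}(G_{k,n};\mathbb{Z}_{2})$), so only the tuples with $a_{1}+2a_{2}+\dots+ka_{k}=r$ survive. This yields the claimed formula. There is no real obstacle here; the only thing to be careful about is keeping the two gradings straight, the total-multiplicity grading $t=a_{1}+\dots+a_{k}$ used by the geometric series, versus the cohomological grading $a_{1}+2a_{2}+\dots+ka_{k}$ used to isolate $\overline{w}_{r}$.
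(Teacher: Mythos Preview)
Your proposal is correct and follows exactly the paper's own argument: the paper derives the proposition by the sentence ``By identifying the homogeneous parts of (cohomological) degree $r$ in formula (\ref{f2}),'' and your write-up simply spells out that identification (geometric series, multinomial expansion, then reading off the degree-$r$ part) in a bit more detail.
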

It is understood that $a_{1},a_{2},\dots,a_{k}\in \mathbb{N}_{0}$.

For $k=3$ (which is the case from now on), Proposition \ref{p0} gives us
\[\overline{w}_{r}=
\DS\sum_{a+2b+3c=r}\bigl(\begin{smallmatrix}a+b+c\\ a\end{smallmatrix}\bigr)\bigl(\begin{smallmatrix}b+c\\ b\end{smallmatrix}\bigr )w_{1}^{a}w_{2}^{b}w_{3}^{c}, \qquad r\in \mathbb{N}.\]

Let $\preceq$ be the grlex ordering on the monomials in $\mathbb{Z}_{2}[w_{1},w_{2},w_{3}]$ (with $w_{1}>w_{2}>w_{3}$). This means that $w_{1}^{a}w_{2}^{b}w_{3}^{c}\prec w_{1}^{d}w_{2}^{e}w_{3}^{f}$ if one of the following three conditions holds:
\begin{itemize}
\item[$\mathrm{(i)}$] $a+b+c<d+e+f$;
\item[$\mathrm{(ii)}$] $a+b+c=d+e+f$ and $a<d$;
\item[$\mathrm{(iii)}$] $a+b+c=d+e+f$, $a=d$ and $b<e$.
\end{itemize}
Of course, $w_{1}^{a}w_{2}^{b}w_{3}^{c}\preceq w_{1}^{d}w_{2}^{e}w_{3}^{f}$ will mean that either $w_{1}^{a}w_{2}^{b}w_{3}^{c}\prec w_{1}^{d}w_{2}^{e}w_{3}^{f}$ or $w_{1}^{a}w_{2}^{b}w_{3}^{c}= w_{1}^{d}w_{2}^{e}w_{3}^{f}$.

\medskip

Let $n\geq 3$ be a fixed integer. In order to find a Gr\"obner basis for the ideal $I_{3,n}=(\overline{w}_{n+1},\overline{w}_{n+2},\overline{w}_{n+3})$, we define the polynomials $g_{m,l}\in \mathbb{Z}_{2}[w_{1},w_{2},w_{3}]$.

\begin{definition}\label{d2}  For $m,l\in \mathbb{N}_{0}$, let 
\[g_{m,l}:=\sum_{a+2b+3c=n+1+m+2l}\bigl(\begin{smallmatrix}a+b+c-m-l\\ a\end{smallmatrix}\bigr) \bigl(\begin{smallmatrix}b+c-l\\b \end{smallmatrix}\bigr)w_{1}^{a}w_{2}^{b}w_{3}^{c}.\]
\end{definition}
As before, it is understood that $a,b,c\in \mathbb{N}_{0}$.

Let us remark first that $g_{0,0}=\overline{w}_{n+1}$.

Secondly, we note that the coefficient $\bigl(\begin{smallmatrix}a+b+c-m-l\\ a\end{smallmatrix}\bigr) \bigl(\begin{smallmatrix}b+c-l\\b \end{smallmatrix}\bigr)$ may be nonzero when $a+b+c-m-l<0$ (or $b+c-l<0$). For example, if $n=4$ we have 
\[g_{5,0}=\!\!\! \sum_{a+2b+3c=10}\!\!\!\bigl(\begin{smallmatrix}a+b+c-5\\ a\end{smallmatrix}\bigr)\bigl(\begin{smallmatrix}b+c\\ b\end{smallmatrix}\bigr)w_{1}^{a}w_{2}^{b}w_{3}^{c}
=\bigl(\begin{smallmatrix}0\\ 0\end{smallmatrix}\bigr)\bigl(\begin{smallmatrix}5\\ 5\end{smallmatrix}\bigr)w_{2}^{5}+\bigl(\begin{smallmatrix}-1\\ 1\end{smallmatrix}\bigr)\bigl(\begin{smallmatrix}3\\ 0\end{smallmatrix}\bigr) w_{1}w_{3}^{3}=w_{2}^{5}+w_{1}w_{3}^{3}.\]

However, we can prove the following lemma.

\begin{lemma}\label{l2}  Let $a,b,c,m,l$ be nonnegative integers. Then the following implication holds: 
\[\bigl(\begin{smallmatrix}a+b+c-m-l\\ a\end{smallmatrix}\bigr)\bigl(\begin{smallmatrix}b+c-l\\ b\end{smallmatrix}\bigr)\neq 0
\Longrightarrow a+b+c<m+l \quad \mbox{or} \quad (b+c\geq m+l \quad \mbox{and} \quad c\geq l).\]
\end{lemma}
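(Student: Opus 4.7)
The plan is to reduce the implication to a short case analysis by first characterizing exactly when each of the two binomial factors is nonzero in the extended (polynomial) sense used in Definition~\ref{d2}. Recall that for $k\in\mathbb{N}_{0}$ and any integer $n$ one has $\binom{n}{k}=\tfrac{n(n-1)\cdots(n-k+1)}{k!}$, so $\binom{n}{k}=0$ if and only if $0\leq n<k$; equivalently, $\binom{n}{k}\neq 0$ iff either $n<0$ or $n\geq k$.

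First I would apply this nonvanishing criterion to each factor separately. For $\binom{a+b+c-m-l}{a}$ nonvanishing is equivalent to
\[ a+b+c<m+l \quad \mbox{or} \quad b+c\geq m+l, \]
and for $\binom{b+c-l}{b}$ nonvanishing is equivalent to
\[ b+c<l \quad \mbox{or} \quad c\geq l. \]

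Then I would combine these two disjunctions. Assume the product is nonzero. If $a+b+c<m+l$ the conclusion already holds, so suppose $a+b+c\geq m+l$; the first disjunction then forces $b+c\geq m+l$. Since $m\geq 0$ this in particular yields $b+c\geq l$, which rules out the alternative $b+c<l$ in the second disjunction. Hence $c\geq l$, and together with $b+c\geq m+l$ this is precisely the right-hand side of the implication.

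I do not anticipate a serious obstacle: the argument is essentially bookkeeping once the extended nonvanishing criterion is in place. The only subtle point, already flagged by the $g_{5,0}$ example preceding the lemma, is that a negative top entry must be counted as giving a nonzero coefficient; overlooking this branch would actually make the lemma false rather than trivial, so it is important to keep both disjuncts of each nonvanishing condition throughout the case analysis.
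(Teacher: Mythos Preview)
Your proof is correct and follows essentially the same approach as the paper: both arguments analyze when each binomial factor is nonzero and then combine the resulting disjunctions by assuming $a+b+c\geq m+l$. Your final step is marginally more direct, since once $b+c\geq m+l$ you observe $b+c\geq l$ immediately from $m\geq 0$, whereas the paper reaches $c\geq l$ by a short contradiction argument; the underlying logic is the same.
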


%\noindent{\bf Proof.}
\begin{proof} Assume that ${a+b+c-m-l\choose a}{b+c-l\choose b}\neq 0$ and $a+b+c\geq m+l$. Then we have that ${a+b+c-m-l\choose a}\neq 0$ and since both $a+b+c-m-l$ and $a$ are nonnegative we conclude that $a+b+c-m-l\geq a$, i.e., $b+c\geq m+l$.

If $c<l$, then $b+c-l<b$ and since ${b+c-l\choose b}\neq 0$ it must be $b+c-l<0$. From this we have $0\leq a+b+c-m-l<a-m\leq a$, but this implies that ${a+b+c-m-l\choose a}=0$ contradicting the assumption ${a+b+c-m-l\choose a}{b+c-l\choose b}\neq 0$. This contradiction proves that $c\geq l$.
\end{proof}

\medskip

Finally, we define the set $G\subseteq \mathbb{Z}_{2}[w_{1},w_{2},w_{3}]$, our candidate for the Gr\"obner basis.

\begin{definition}\label{d3}  $G:=\{ g_{m,l}\mid m+l\leq n+1, m,l\in \mathbb{N}_{0}\}$.
\end{definition}

\medskip

We now prove an important property of $G$.

\begin{proposition}\label{p2} For $m,l\in \mathbb{N}_{0}$ such that $m+l\leq n+1$, we have that the leading term $\mathrm{LT}(g_{m,l})=w_{1}^{n+1-m-l}w_{2}^{m}w_{3}^{l}$ and all other terms (monomials) appearing in $g_{m,l}$ have the sum of the exponents $<n+1$.
\end{proposition}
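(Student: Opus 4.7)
The plan is to verify the two assertions of the proposition by examining which lattice points $(a,b,c)$ in $\mathbb{N}_0^3$ can contribute a nonzero term to $g_{m,l}$, and then bounding $a+b+c$ for each such contribution.

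First, I would check that the monomial $w_1^{n+1-m-l} w_2^{m} w_3^{l}$ really does appear in $g_{m,l}$ (which also verifies it has nonzero coefficient $1$). Setting $a=n+1-m-l$, $b=m$, $c=l$, one has $a+2b+3c=n+1+m+2l$, so the index is valid, and the coefficient equals
\[
\binom{a+b+c-m-l}{a}\binom{b+c-l}{b}=\binom{n+1-m-l}{n+1-m-l}\binom{m}{m}=1.
\]
The sum of the exponents is clearly $n+1$.

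Next, I would take an arbitrary triple $(a,b,c)\in \mathbb{N}_0^{3}$ with $a+2b+3c=n+1+m+2l$ and
\[
\binom{a+b+c-m-l}{a}\binom{b+c-l}{b}\neq 0,
\]
and apply Lemma \ref{l2}. In the first alternative, $a+b+c<m+l\leq n+1$, so the total degree of this monomial is strictly less than $n+1$. In the second alternative, $b+c\geq m+l$ and $c\geq l$; rewriting the defining relation as
\[
a+b+c=(n+1+m+2l)-(b+2c)=n+1-(b+c-m-l)-(c-l),
\]
one sees $a+b+c\leq n+1$, with equality forcing $b+c=m+l$ and $c=l$, hence $b=m$ and $a=n+1-m-l$.

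Combining these observations: every monomial appearing in $g_{m,l}$ has total degree $\leq n+1$, and the unique monomial of total degree exactly $n+1$ is $w_1^{n+1-m-l}w_2^{m}w_3^{l}$. Since the first criterion of grlex is total degree, this monomial is the leading term, and every other monomial has sum of exponents strictly less than $n+1$, as claimed. The only subtlety to watch for is the possibility of ``negative'' binomial coefficients being nonzero in $\mathbb{Z}_2$; this is precisely what Lemma \ref{l2} rules out by forcing one of the two alternatives above, so no case analysis beyond invoking that lemma is required.
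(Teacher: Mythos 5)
Your proof is correct and follows essentially the same route as the paper: verify the coefficient of $w_1^{n+1-m-l}w_2^m w_3^l$ is $1$, invoke Lemma~\ref{l2} to split into the two alternatives, and in the second alternative observe that $a+b+c = n+1 - (b+c-m-l) - (c-l) \leq n+1$ with equality only at the claimed leading term; the paper phrases this last step via $b+2c\geq m+2l$ (equality iff $b=m$, $c=l$), which is the same computation.
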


%\noindent{\bf Proof.}
\begin{proof} Obviously, the (nonnegative) integers $a:=n+1-m-l$, $b:=m$, $c:=l$ satisfy the conditions
$a+2b+3c=n+1+m+2l$ and ${a+b+c-m-l\choose a}{b+c-l\choose b}={a\choose a}{b\choose b}=1$ and so the monomial $w_{1}^{n+1-m-l}w_{2}^{m}w_{3}^{l}$ does appear in $g_{m,l}$.

\medskip

Now, it suffices to prove the inequality $a+b+c<n+1$ for all other monomials $w_{1}^{a}w_{2}^{b}w_{3}^{c}$ appearing in $g_{m,l}$. If the monomial $w_{1}^{a}w_{2}^{b}w_{3}^{c}$ is a summand in $g_{m,l}$, then $a+2b+3c=n+1+m+2l$ (i.e., $a=n+1+m+2l-2b-3c$) and ${a+b+c-m-l\choose a}{b+c-l\choose b}\equiv 1\imod 2$. According to Lemma \ref{l2}, $a+b+c<m+l$ or $b+c\geq m+l$ and $c\geq l$.

\medskip

In the first case $a+b+c<m+l\leq n+1$ and we are done.

\medskip

Otherwise, $b+c\geq m+l$ and $c\geq l$ give us that $b+2c\geq m+2l$ where the equality holds only if $c=l$ and $b=m$. But then $a=n+1+m+2l-2b-3c=n+1-m-l$ and since $w_{1}^{a}w_{2}^{b}w_{3}^{c}\neq w_{1}^{n+1-m-l}w_{2}^{m}w_{3}^{l}$, we actually have $b+2c>m+2l$. This implies that $a+b+c=n+1+m+2l-b-2c<n+1$.
\end{proof}

In what follows, we use the well-known formula ${a\choose b}={a-1\choose b}+{a-1\choose b-1}$,
$a,b\in \mathbb{Z}$ and its mod $2$ equivalents ${a\choose b}+{a-1\choose b-1}\equiv {a-1\choose b}\imod 2$ and ${a-1\choose b-1}\equiv {a\choose b}+{a-1\choose b}\imod 2$, $a,b\in \mathbb{Z}$ (it is understood that ${a\choose b}=0$ if $b$ is negative).

\medskip

Let $I_{G}$ be the ideal in $\mathbb{Z}_{2}[w_{1},w_{2},w_{3}]$ generated by $G$. Eventually, we shall prove that $I_{G}=I_{3,n}=(\overline{w}_{n+1},\overline{w}_{n+2},\overline{w}_{n+3})$, but for the moment we prove that $I_{G}$ contains $I_{3,n}$.

\begin{proposition}\label{p3} $I_{3,n}\subseteq I_{G}$.
\end{proposition}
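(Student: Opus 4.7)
The plan is to prove the three containments $\overline{w}_{n+1}, \overline{w}_{n+2}, \overline{w}_{n+3} \in I_G$. The first is immediate, since $\overline{w}_{n+1} = g_{0,0}$ by the remark following Definition \ref{d2}. For the other two I would exhibit the explicit identities
\[\overline{w}_{n+2} = g_{1,0} + w_1 \, g_{0,0}, \qquad \overline{w}_{n+3} = g_{2,0} + w_1^2 \, g_{0,0},\]
noting that $g_{0,0}$, $g_{1,0}$, $g_{2,0}$ all belong to $G$ because their indices satisfy $m + l \leq 2 \leq n+1$ (recall $n \geq 3$).

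Both identities are verified by comparing coefficients of each monomial $w_1^a w_2^b w_3^c$. By Proposition \ref{p0} and Definition \ref{d2}, the polynomials $\overline{w}_{n+1+k}$ and $g_{k,0}$ (for $k = 1, 2$) are both sums over $\{a + 2b + 3c = n + 1 + k\}$ and differ only in the first binomial coefficient: $\binom{a+b+c}{a}$ versus $\binom{a+b+c-k}{a}$. The mod $2$ Pascal identities recalled just before the proposition give $\binom{a+b+c}{a} + \binom{a+b+c-1}{a} \equiv \binom{a+b+c-1}{a-1}$, and applying Pascal twice (with cancellation of the two middle terms $\binom{a+b+c-2}{a-1}$) also $\binom{a+b+c}{a} + \binom{a+b+c-2}{a} \equiv \binom{a+b+c-2}{a-2} \pmod{2}$. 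After the reindexing $a = a' + k$, the difference $\overline{w}_{n+1+k} - g_{k,0}$ becomes $w_1^k \, \overline{w}_{n+1} = w_1^k \, g_{0,0}$, which closes each identity.

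The main technical point, rather than a genuine obstacle, is the bookkeeping of the boundary terms ($a = 0$ when $k = 1$, and $a \in \{0, 1\}$ when $k = 2$): at these values the transformed binomials $\binom{a+b+c-k}{a-k}$ vanish, so the reindexed sum ranges only over $a' \geq 0$ with no spurious contributions. Everything else is routine binomial manipulation mod $2$, and the stronger general pattern $\overline{w}_{n+1+k} = g_{k,0} + w_1^k g_{0,0}$ for powers of two $k$ is visible here but is not needed beyond $k = 1, 2$.
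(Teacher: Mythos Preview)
Your proposal is correct and follows essentially the same approach as the paper: both establish the identities $\overline{w}_{n+2}=g_{1,0}+w_{1}g_{0,0}$ and $\overline{w}_{n+3}=g_{2,0}+w_{1}^{2}g_{0,0}$ via the mod~$2$ Pascal relations, with the same handling of the boundary terms after the shift $a\mapsto a-k$. Your aside that the pattern $\overline{w}_{n+1+k}=g_{k,0}+w_{1}^{k}g_{0,0}$ persists for $k$ a power of two (via Vandermonde mod~$2$) is a nice observation, though as you note it is not needed here.
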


%\noindent{\bf Proof.}
\begin{proof} As we have already noticed, $\overline{w}_{n+1}=g_{0,0}\in I_{G}$.

Since
\begin{eqnarray*}
 w_{1}g_{0,0}+g_{1,0} & = & 
w_{1}\mkern-36mu\sum_{a+2b+3c=n+1}\mkern-18mu\bigl(\begin{smallmatrix}a+b+c\\ a\end{smallmatrix}\bigr)\bigl(\begin{smallmatrix}b+c\\ b\end{smallmatrix}\bigr)w_{1}^{a}w_{2}^{b}w_{3}^{c}
\,\,+\mkern-36mu\sum_{a+2b+3c=n+2}\mkern-18mu\bigl(\begin{smallmatrix}a+b+c-1\\ a\end{smallmatrix}\bigr)\bigl(\begin{smallmatrix}b+c\\ b\end{smallmatrix}\bigr)w_{1}^{a}w_{2}^{b}w_{3}^{c}\\
& = &\mkern-36mu \sum_{a+2b+3c=n+1}\mkern-18mu\bigl(\begin{smallmatrix}a+b+c\\ a\end{smallmatrix}\bigr)\bigl(\begin{smallmatrix}b+c\\ b\end{smallmatrix}\bigr)w_{1}^{a+1}w_{2}^{b}w_{3}^{c}
\,\,+\mkern-36mu\sum_{a+2b+3c=n+2}\mkern-18mu\bigl(\begin{smallmatrix}a+b+c-1\\ a\end{smallmatrix}\bigr)\bigl(\begin{smallmatrix}b+c\\ b\end{smallmatrix}\bigr)w_{1}^{a}w_{2}^{b}w_{3}^{c}\\
& = & \mkern-36mu\sum_{a+2b+3c=n+2}\mkern-18mu\bigl(\begin{smallmatrix}a+b+c-1\\ a-1\end{smallmatrix}\bigr)\bigl(\begin{smallmatrix}b+c\\ b\end{smallmatrix}\bigr)w_{1}^{a}w_{2}^{b}w_{3}^{c}
\,\,+\mkern-36mu\sum_{a+2b+3c=n+2}\mkern-18mu\bigl(\begin{smallmatrix}a+b+c-1\\ a\end{smallmatrix}\bigr)\bigl(\begin{smallmatrix}b+c\\ b\end{smallmatrix}\bigr)w_{1}^{a}w_{2}^{b}w_{3}^{c}\\
& = & \mkern-36mu\sum_{a+2b+3c=n+2}\bigl(\begin{smallmatrix}a+b+c\\ a\end{smallmatrix}\bigr)\bigl(\begin{smallmatrix}b+c\\ b\end{smallmatrix}\bigr)w_{1}^{a}w_{2}^{b}w_{3}^{c}\\
& = & \overline{w}_{n+2},
\end{eqnarray*}
we conclude that $\overline{w}_{n+2}=w_{1}g_{0,0}+g_{1,0}\in I_{G}$. Let us remark that the change of variable $a\mapsto a-1$ was made in the first sum, but we can still assume that $a\geq 0$ since ${a+b+c-1\choose a-1}$ is obviously equal to zero for $a=0$.

In order to show that $\overline{w}_{n+3}\in I_{G}$ we calculate:
\begin{eqnarray*}
w_{1}^{2}g_{0,0}+g_{2,0}
& = & \mkern-36mu\sum_{a+2b+3c=n+1}\bigl(\begin{smallmatrix}a+b+c\\ a\end{smallmatrix}\bigr)\bigl(\begin{smallmatrix}b+c\\ b\end{smallmatrix}\bigr)w_{2}^{b}w_{3}^{c}
\,\,+\mkern-36mu\sum_{a+2b+3c=n+3}\bigl(\begin{smallmatrix}a+b+c-2\\ a\end{smallmatrix}\bigr)\bigl(\begin{smallmatrix}b+c\\ b\end{smallmatrix}\bigr)w_{1}^{a}w_{2}^{b}w_{3}^{c}\\
& = &\mkern-36mu \sum_{a+2b+3c=n+3}\mkern-18mu\bigl(\begin{smallmatrix}a+b+c-2\\ a-2\end{smallmatrix}\bigr)\bigl(\begin{smallmatrix}b+c\\ b\end{smallmatrix}\bigr)w_{1}^{a}w_{2}^{b}w_{3}^{c}
+\mkern-36mu\sum_{a+2b+3c=n+3}\mkern-18mu\bigl(\begin{smallmatrix}a+b+c-2\\ a\end{smallmatrix}\bigr)\bigl(\begin{smallmatrix}b+c\\ b\end{smallmatrix}\bigr)w_{1}^{a}w_{2}^{b}w_{3}^{c}.
\end{eqnarray*}

First, we note that the change of variable $a\mapsto a-2$ in the first sum does not affect the requirement that $a$ runs through $\mathbb{N}_{0}$ since for $a=0$ and $a=1$ the binomial coefficient ${a+b+c-2\choose a-2}$ is equal to zero. Also,
${a+b+c-2\choose a-2}+{a+b+c-2\choose a}\equiv {a+b+c-1\choose a-1}+{a+b+c-2\choose a-1}+{a+b+c-2\choose a}= {a+b+c-1\choose a-1}+{a+b+c-1\choose a}={a+b+c\choose a}\imod 2$, so we have
\[w_{1}^{2}g_{0,0}+g_{2,0}=\sum_{a+2b+3c=n+3}\bigl(\begin{smallmatrix}a+b+c\\ a\end{smallmatrix}\bigr)\bigl(\begin{smallmatrix}b+c\\ b\end{smallmatrix}\bigr)w_{1}^{a}w_{2}^{b}w_{3}^{c}=\overline{w}_{n+3}\]
and the proposition is proved.
\end{proof}
%\bigskip

In the subsequent calculations, the polynomials $g_{m,l}$ with $m+l=n+2$ will take part. We note that these polynomials are not necessarily elements of $G$, but, as Proposition \ref{p4} below states, they can be written as sums of some elements of $G$ (with possibility that this sum is empty, i.e., $g_{m,l}=0$). 

In order to achieve this kind of presentation for $g_{m,l}$ ($m+l=n+2$), we prove the crucial fact which is stated in the following lemma. (We recall that the integer $n\geq 3$ is fixed.)

\begin{lemma}\label{l3}  Let $m,l,a,b,c$ be nonnegative integers such that $m+l=n+2$ and $a+2b+3c=n+1+m+2l$. Then the following congruence holds:
\[\sum_{j=0}^{[\frac{m}{2}]}\bigl(\begin{smallmatrix}m-j\\ j\end{smallmatrix}\bigr)\bigl(\begin{smallmatrix}a+b+c-n-2+j\\ a\end{smallmatrix}\bigr)\bigl(\begin{smallmatrix}b+c-l-j\\ b\end{smallmatrix}\bigr)\equiv 0\imod 2,\]
or, singling out the summand for $j=0$,
\[\bigl(\begin{smallmatrix}a+b+c-n-2\\ a\end{smallmatrix}\bigr)\bigl(\begin{smallmatrix}b+c-l\\ b\end{smallmatrix}\bigr)\equiv
 \sum_{j=1}^{[\frac{m}{2}]}\bigl(\begin{smallmatrix}m-j\\ j\end{smallmatrix}\bigr)\bigl(\begin{smallmatrix}a+b+c-n-2+j\\ a\end{smallmatrix}\bigr)\bigl(\begin{smallmatrix}b+c-l-j\\ b\end{smallmatrix}\bigr)\imod 2.\]
\end{lemma}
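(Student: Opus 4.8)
The plan is to prove the congruence by a generating-function computation modulo $2$, treating everything as identities in the power series ring $\mathbb{Z}_2[[x,y,z]]$. First I would observe that the inner two binomial coefficients in the $j$-th summand are exactly the coefficients that appear in the definition of the polynomials $g_{m',l'}$ (Definition \ref{d2}); more precisely, with $m+l=n+2$ fixed, the coefficient $\binom{a+b+c-n-2+j}{a}\binom{b+c-l-j}{b}$ is the coefficient of $w_1^a w_2^b w_3^c$ in a series of the form $\frac{1}{(1+w_1+w_2+w_3)^{?}}$ shifted appropriately, so that the whole $j$-sum on the left is the coefficient extraction from a single product of such geometric-type series times $\sum_j \binom{m-j}{j} t^j$. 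The key algebraic input is the classical identity $\sum_{j\ge 0}\binom{m-j}{j}t^j \equiv$ (a rational function related to the Fibonacci/Chebyshev recursion) — concretely, $\sum_{j}\binom{m-j}{j}t^j$ is the coefficient of $s^m$ in $\frac{1}{1-s-s^2 t}$, which over $\mathbb{Z}_2$ one can often collapse using the Lucas/Kummer behavior of $\binom{m-j}{j}$.

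The main steps, in order, would be: (1) fix $m,l$ with $m+l=n+2$ and rewrite the left-hand sum as $\sum_{j=0}^{[m/2]}\binom{m-j}{j}\,C_j(a,b,c)$ where $C_j(a,b,c)=\binom{a+b+c-n-2+j}{a}\binom{b+c-l-j}{b}$; (2) recognize, using the expansion $(1-w)^{-1}=\sum_t w^t$ valid in $\mathbb{Z}_2[[w_1,w_2,w_3]]$ just as in \eqref{f2}, that $\sum_{a,b,c}C_j(a,b,c)w_1^aw_2^bw_3^c$ equals $w_3^{\,l+j}\,(w_2+w_3)^{\,?}\cdot(\ldots)$ — the precise bookkeeping of exponents coming from matching $a+2b+3c=n+1+m+2l$ and the shifts by $n+2-j$ and $l+j$; (3) sum over $j$ with weights $\binom{m-j}{j}$, pulling the $j$-dependence into a factor of the shape $\sum_j \binom{m-j}{j}\big(w_3(1+w_1+w_2+w_3)\big)^{j}$ or similar; (4) apply the combinatorial identity for $\sum_j\binom{m-j}{j}u^j$ to rewrite this factor, and finally (5) show the resulting series has \emph{no} monomial $w_1^aw_2^bw_3^c$ with $a+2b+3c=n+1+m+2l$ — equivalently, that after clearing denominators the relevant homogeneous component vanishes mod $2$ because $m+l=n+2$ forces a cancellation. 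I expect the cleanest route for step (4)–(5) is to use the mod $2$ fact that $\sum_{j\ge 0}\binom{m-j}{j}u^j \equiv \prod (1+u^{2^i})^{\epsilon_i}$-type product dictated by the binary digits of $m$, together with Kummer's theorem to control carries.

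Alternatively — and this may be the more robust plan — I would prove the lemma by a double-counting / telescoping argument directly on the binomial coefficients, using repeatedly the mod $2$ Pascal relations recalled just before the lemma statement ($\binom{a}{b}+\binom{a-1}{b-1}\equiv\binom{a-1}{b}$ and $\binom{a-1}{b-1}\equiv\binom{a}{b}+\binom{a-1}{b}$). The idea is to induct on $m$: the recursion $\binom{m-j}{j}=\binom{m-1-j}{j}+\binom{m-1-j}{j-1}$ splits the $j$-sum for parameter $m$ into the $j$-sum for $m-1$ (same $l$) plus a reindexed $j$-sum, and one checks the reindexed piece matches the $m-1$ sum with $l$ replaced by $l+1$ after absorbing a Pascal step in the $\binom{a+b+c-n-2+j}{a}$ factor. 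The base cases $m=0$ and $m=1$ reduce to a single summand which must be shown $\equiv 0$ using $m+l=n+2$ and $a+2b+3c=n+1+m+2l$: there $a+b+c-n-2 = b+2c-l-1-\ldots$ and a short parity argument via Lucas's theorem finishes it.

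The hard part will be step (2)/(5): correctly identifying which product of geometric series produces the coefficients $C_j$ and then proving the relevant homogeneous slice vanishes — this is where the hypothesis $m+l=n+2$ (rather than $\le n+1$, where $g_{m,l}\in G$ is a genuine basis element) must be used, and getting the exponent shifts exactly right is delicate. In the inductive approach the analogous obstacle is verifying that the reindexing in the induction step is compatible with the constraint $a+2b+3c=n+1+m+2l$ as $m$ decreases, i.e. that the substitution $m\mapsto m-1$, $l\mapsto l+1$ keeps the sum $m+l=n+2$ invariant (it does) while the degree constraint shifts consistently (it shifts by $1$ in $r=n+1+m+2l$, matching the Pascal step). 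I would present the inductive proof, isolating the base case parity computation as the crux.
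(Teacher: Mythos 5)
Your fallback plan (induction on $m$ via the Pascal relation on $\binom{m-j}{j}$) is the same general strategy as the paper's proof, but the induction step as you describe it does not work, and this is a genuine gap rather than a bookkeeping detail. The lemma is a pointwise statement for a fixed triple $(a,b,c)$ subject to the two side conditions $m+l=n+2$ and $a+2b+3c=n+1+m+2l$. After splitting $\binom{m-j}{j}=\binom{m-1-j}{j}+\binom{m-1-j}{j-1}$, the piece $\sum_j\binom{m-1-j}{j}\binom{a+b+c-n-2+j}{a}\binom{b+c-l-j}{b}$ is \emph{not} ``the $j$-sum for $m-1$ with the same $l$'': the pair $(m-1,l)$ violates $m+l=n+2$, and the degree constraint is violated as well, so the induction hypothesis simply does not apply to it. To restore the constraints one must decrease $m$, increase $l$, \emph{and} shift $(a,b,c)$ so that $a+2b+3c$ increases by the right amount; this forces further Pascal splittings of the inner binomials $\binom{b+c-l-j}{b}$ and $\binom{a+b+c-n-2+j}{a}$. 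Carrying this out (as the paper does) produces a three-fold cascade: the sum reduces modulo $2$ to instances of the statement for $(m-1,l+1)$ with $(a,b-1,c+1)$, for $(m-2,l+2)$ with $(a-1,b,c+1)$, and for $(m-3,l+3)$ with $(a,b,c+1)$, together with boundary checks on the $j$-range and the degenerate cases $b=0$ and $a=0$. Consequently the induction needs the three base cases $m=0,1,2$, not just $m=0,1$ as you propose; the case $m=2$ has two summands and requires its own case analysis ($c=n+1$, $c=n$ with $(a,b)\in\{(3,0),(1,1)\}$, etc.).

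A second, smaller problem is your plan to finish the base cases ``via Lucas's theorem'': the binomial coefficients here can have negative upper entries (the paper's example computes a nonzero coefficient $\binom{-1}{1}$), and Lucas does not apply to those. The paper's base-case argument instead derives the inequalities $a+b+c\geq n+2$ and $c\leq n+1$ from the degree constraint and shows in each subcase that one of the two binomials has a nonnegative upper entry strictly smaller than its lower entry, hence vanishes. Your primary (generating-function) plan is left entirely unexecuted and hedged at exactly the step (identifying the series whose coefficients are $\binom{a+b+c-n-2+j}{a}\binom{b+c-l-j}{b}$ and showing the relevant homogeneous slice vanishes when $m+l=n+2$) that would constitute the whole proof, so it cannot be credited either. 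In short: right general idea for the inductive route, but the actual reduction scheme, the correct shifts of $(a,b,c,l)$, the extra base case $m=2$, and the treatment of negative upper entries are all missing, and these are precisely the substance of the paper's proof.
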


%\noindent{\bf Proof.}
\begin{proof} We prove the lemma by induction on $m$. Let
\[S(m,l,a,b,c):=\sum_{j=0}^{[\frac{m}{2}]}\bigl(\begin{smallmatrix}m-j\\ j\end{smallmatrix}\bigr)\bigl(\begin{smallmatrix}a+b+c-n-2+j\\ a\end{smallmatrix}\bigr)\bigl(\begin{smallmatrix}b+c-l-j\\ b\end{smallmatrix}\bigr).\]

\medskip

The induction base will consist of three parts: $m=0$, $m=1$ and $m=2$.

\medskip

Take $m=0$ and nonnegative integers $l,a,b,c$ such that $l=n+2$ and $a+2b+3c=n+1+2l$.
The statement of the lemma in this case simplifies to:
\[S(0,l,a,b,c)=\bigl(\begin{smallmatrix}a+b+c-n-2\\ a\end{smallmatrix}\bigr)\bigl(\begin{smallmatrix}b+c-n-2\\ b\end{smallmatrix}\bigr)\equiv 0\imod 2.\]

Since $a+2b+3c=n+1+2l=3n+5$, we have that $3c\leq a+2b+3c=3n+5$, so $c\leq n+\frac{5}{3}<n+2$, i.e., $b+c-n-2<b$.

If $b+c-n-2\geq 0$, then ${b+c-n-2\choose b}=0$ and we are done.

If $b+c-n-2<0$, then $a+b+c-n-2<a$. Also, $3(a+b+c)\geq a+2b+3c=3n+5$ implying $a+b+c\geq n+\frac{5}{3}$. But since $a+b+c$ is an integer, we actually have that $a+b+c\geq n+2$. So, $0\leq a+b+c-n-2<a$, and we conclude that ${a+b+c-n-2\choose a}=0$.

Thus, we have proved that $S(0,l,a,b,c)$ is actually equal to $0$.

\medskip

For $m=1$, take $l:=n+1$ and $a,b,c\geq 0$ such that $a+2b+3c=n+1+1+2l=3n+4$. In this case we need to prove
\[S(1,l,a,b,c)=\bigl(\begin{smallmatrix}a+b+c-n-2\\ a\end{smallmatrix}\bigr)\bigl(\begin{smallmatrix}b+c-n-1\\ b\end{smallmatrix}\bigr)\equiv 0\imod 2.\]
As in the case $m=0$, we obtain that $a+b+c\geq n+2$ and $c\leq n+1$. If $c<n+1$, the proof is analogous to that of the first case. If $c=n+1$, then, since $a+2b+3c=3n+4$, $a$ must be $1$ and $b$ must be $0$ and we obtain ${a+b+c-n-2\choose a}{b+c-n-1\choose b}={0\choose 1}{0\choose 0}=0$.

Again, we have proved that $S(1,l,a,b,c)=0$.

\medskip

If $m=2$, then $l=n$ and let $a,b,c$ be nonnegative integers such that $a+2b+3c=n+1+2+2l=3n+3$. Now, $S(2,l,a,b,c)$ has two summands and the statement of the lemma in this case reduces to the mod $2$ congruence 
\[\bigl(\begin{smallmatrix}a+b+c-n-2\\ a\end{smallmatrix}\bigr)\bigl(\begin{smallmatrix}b+c-n\\ b\end{smallmatrix}\bigr)+\bigl(\begin{smallmatrix}a+b+c-n-1\\ a\end{smallmatrix}\bigr)\bigl(\begin{smallmatrix}b+c-n-1\\ b\end{smallmatrix}\bigr)\equiv 0.\]
From the condition $a+2b+3c=3n+3$ we can deduce that $a+b+c\geq n+1$ and $c\leq n+1$.

If $c=n+1$, then necessary $a=b=0$, and we have 
\[S(2,l,a,b,c)=S(2,l,0,0,n+1)=\bigl(\begin{smallmatrix}-1\\ 0\end{smallmatrix}\bigr)\bigl(\begin{smallmatrix}1\\ 0\end{smallmatrix}\bigr)+\bigl(\begin{smallmatrix}0\\ 0\end{smallmatrix}\bigr)\bigl(\begin{smallmatrix}0\\ 0\end{smallmatrix}\bigr)=1+1\equiv 0\imod 2.\]

If $a+b+c=n+1$, since $0\leq c\leq b+c\leq a+b+c$ and $c+(b+c)+(a+b+c)=3(n+1)$, we conclude that $c$ must be $n+1$ and
this case reduces to the previous one.

Suppose now that $a+b+c\geq n+2$ and $c\leq n$. If $c<n$, then by the method of the case $m=0$ one proves that both summands must be zero. If $c=n$, then there are two possibilities for the pair $(a,b)$ such that the condition $a+2b+3c=3n+3$ is satisfied. First, if $a=3$ and $b=0$, we have
\[S(2,l,a,b,c)=\bigl(\begin{smallmatrix}1\\ 3\end{smallmatrix}\bigr)\bigl(\begin{smallmatrix}0\\ 0\end{smallmatrix}\bigr)+\bigl(\begin{smallmatrix}2\\ 3\end{smallmatrix}\bigr)\bigl(\begin{smallmatrix} -1\\ 0\end{smallmatrix}\bigr) =0+0=0.\]
Finally, if $a=b=1$, we obtain \[S(2,l,a,b,c)=\bigl(\begin{smallmatrix} 0\\ 1\end{smallmatrix}\bigr) \bigl(\begin{smallmatrix} 1\\ 1\end{smallmatrix}\bigr) +\bigl(\begin{smallmatrix} 1\\ 1\end{smallmatrix}\bigr) \bigl(\begin{smallmatrix} 0\\ 1\end{smallmatrix}\bigr)
=0+0=0,\]
and the basis for the induction is completed.

\medskip

For the induction step take $m\geq 3$, nonnegative integers $l,a,b,c$ such that $m+l=n+2$ and $a+2b+3c=n+1+m+2l$ and suppose that the statement of the lemma is true for all nonnegative integers $<m$. We need to prove that $S(m,l,a,b,c)$ is an even integer. Since ${m-j\choose j}={m-1-j\choose j}+{m-1-j\choose j-1}$, we have:
\[S(m,l,a,b,c)=\]
\[=\underbrace{\sum_{j=0}^{[\frac{m}{2}]}\bigl(\begin{smallmatrix} m-1-j\\ j \end{smallmatrix}\bigr) \bigl(\begin{smallmatrix} a+b+c-n-2+j\\ a\end{smallmatrix}\bigr) \bigl(\begin{smallmatrix} b+c-l-j\\ b \end{smallmatrix}\bigr)}_{S_1}
+\underbrace{\sum_{j=0}^{[\frac{m}{2}]}\bigl(\begin{smallmatrix} m-1-j\\ j-1\end{smallmatrix}\bigr) \bigl(\begin{smallmatrix} a+b+c-n-2+j\\ a\end{smallmatrix}\bigr) \bigl(\begin{smallmatrix} b+c-l-j\\ b \end{smallmatrix}\bigr)}_{S_2} .\]
%Denoting these two sums by $S_{1}$ and $S_{2}$ respectively, we have $S(m,l,a,b,c)=S_{1}+S_{2}$. 
Since ${b+c-l-j\choose b}={b+c-l-j-1\choose b}+{b+c-l-j-1\choose b-1}$, we obtain that $S_1$ is equal to:
\[\underbrace{\sum_{j=0}^{[\frac{m}{2}]}\bigl(\begin{smallmatrix} m-1-j\\ j\end{smallmatrix}\bigr) \bigl(\begin{smallmatrix} a+b+c-n-2+j\\ a\end{smallmatrix}\bigr) \bigl(\begin{smallmatrix} b+c-l-j-1\\ b\end{smallmatrix}\bigr)}_{S_3}
+\underbrace{\sum_{j=0}^{[\frac{m}{2}]}\bigl(\begin{smallmatrix} m-1-j\\ j \end{smallmatrix}\bigr) \bigl(\begin{smallmatrix} a+b+c-n-2+j\\ a\end{smallmatrix}\bigr) \bigl(\begin{smallmatrix} b+c-l-j-1\\ b-1\end{smallmatrix}\bigr)}_{S_4} .\]
%We now denote these two sums by $S_{3}$ and $S_{4}$ respectively and obtain $S_{1}=S_{3}+S_{4}$ implying
So, $S(m,l,a,b,c)=S_{2}+S_{3}+S_{4}$.

First, we consider the sum $S_{4}$. If $m$ is odd, then $[\frac{m}{2}]=[\frac{m-1}{2}]$ and if $m$ is even, say $m=2r$ ($r\geq 2$), then the first factor of the last summand in the sum $S_{4}$ (for $j=[\frac{m}{2}]=r$) is ${r-1\choose r}=0$, so in either case
\begin{eqnarray*}
S_{4} & = & \sum_{j=0}^{[\frac{m-1}{2}]}\bigl(\begin{smallmatrix} m-1-j\\ j\end{smallmatrix}\bigr) \bigl(\begin{smallmatrix} a+b+c-n-2+j\\ a\end{smallmatrix}\bigr) \bigl(\begin{smallmatrix} b+c-l-j-1\\ b-1\end{smallmatrix}\bigr) \\
& = & S(m-1,l+1,a,b-1,c+1)\equiv 0\imod 2,
\end{eqnarray*}

\medskip
\noindent by the induction hypothesis if $b>0$ and if $b=0$ it is obvious that $S_{4}=0$.

Now, we have $S(m,l,a,b,c)\equiv S_{2}+S_{3}\imod 2$ and we consider the sum $S_{3}$. Since ${m-1-j\choose j}={m-2-j\choose j}+{m-2-j\choose j-1}$, $S_3$ can be written as the sum:
\[\underbrace{\sum_{j=0}^{[\frac{m}{2}]}\bigl(\begin{smallmatrix} m-2-j\\ j\end{smallmatrix}\bigr) \bigl(\begin{smallmatrix} a+b+c-n-2+j\\ a\end{smallmatrix}\bigr) \bigl(\begin{smallmatrix} b+c-l-j-1\\ b\end{smallmatrix}\bigr)}_{S_5}
+\underbrace{\sum_{j=0}^{[\frac{m}{2}]}\bigl(\begin{smallmatrix} m-2-j\\ j-1\end{smallmatrix}\bigr) \bigl(\begin{smallmatrix} a+b+c-n-2+j\\ a\end{smallmatrix}\bigr) \bigl(\begin{smallmatrix} b+c-l-j-1\\ b \end{smallmatrix}\bigr)}_{S_6} .\]
%As before, we denote these two sums by $S_{5}$ and $S_{6}$ respectively and
So, we have the congruence $S(m,l,a,b,c)\equiv S_{2}+S_{5}+S_{6}\imod 2$.

Consider the sum $S_{5}$ and its summand for $j=[\frac{m}{2}]$. The first factor of this summand is ${m-2-[\frac{m}{2}]\choose [\frac{m}{2}]}$. If $m=3$, this binomial coefficient equals ${0\choose 1}=0$. If $m\geq 4$, we have that $m-2-[\frac{m}{2}]\geq [\frac{m}{2}]-2\geq 0$. Also, $\frac{m}{2}-1<[\frac{m}{2}]$ implying $m-2-[\frac{m}{2}]<[\frac{m}{2}]$. We conclude that ${m-2-[\frac{m}{2}]\choose [\frac{m}{2}]}=0$, i.e., the summand obtained for $j=[\frac{m}{2}]$ is zero and so: 
\begin{eqnarray*}
S_5 & = & \sum_{j=0}^{[\frac{m}{2}]-1}\bigl(\begin{smallmatrix} m-2-j\\ j\end{smallmatrix}\bigr) \bigl(\begin{smallmatrix} a+b+c-n-2+j\\ a\end{smallmatrix}\bigr) \bigl(\begin{smallmatrix} b+c-l-j-1\\ b\end{smallmatrix}\bigr) \\
& = &\sum_{j=0}^{[\frac{m-2}{2}]}\bigl(\begin{smallmatrix} m-2-j\\ j\end{smallmatrix}\bigr) \bigl(\begin{smallmatrix} a+b+c-n-2+j\\ a\end{smallmatrix}\bigr) \bigl(\begin{smallmatrix} b+c-l-j-1\\ b\end{smallmatrix}\bigr) .
\end{eqnarray*}

By looking at the sum $S_{2}$ one easily sees that the first summand (for $j=0$) equals zero (since ${m-1\choose -1}=0$). This means that
\begin{eqnarray*}
S_{2} & = &\sum_{j=1}^{[\frac{m}{2}]}\bigl(\begin{smallmatrix} m-1-j\\ j-1\end{smallmatrix}\bigr) \bigl(\begin{smallmatrix} a+b+c-n-2+j\\ a\end{smallmatrix}\bigr) \bigl(\begin{smallmatrix} b+c-l-j\\ b\end{smallmatrix}\bigr) \\
& = & \sum_{j=0}^{[\frac{m}{2}]-1}\bigl(\begin{smallmatrix} m-1-j-1\\ j\end{smallmatrix}\bigr) \bigl(\begin{smallmatrix} a+b+c-n-2+j+1\\ a\end{smallmatrix}\bigr) \bigl(\begin{smallmatrix} b+c-l-j-1\\ b\end{smallmatrix}\bigr) \\
& = & \sum_{j=0}^{[\frac{m-2}{2}]}\bigl(\begin{smallmatrix} m-2-j\\ j\end{smallmatrix}\bigr) \bigl(\begin{smallmatrix} a+b+c-n-1+j\\ a\end{smallmatrix}\bigr) \bigl(\begin{smallmatrix} b+c-l-j-1\\ b\end{smallmatrix}\bigr) .
\end{eqnarray*}

Now the sums $S_{2}$ and $S_{5}$ are similar and since ${a+b+c-n-1+j\choose a}+{a+b+c-n-2+j\choose a}\equiv {a+b+c-n-2+j\choose a-1}\imod 2$, we have that
\begin{eqnarray*}
S_{2}+S_{5} & \equiv & \sum_{j=0}^{[\frac{m-2}{2}]}\bigl(\begin{smallmatrix} m-2-j\\ j\end{smallmatrix}\bigr) \bigl(\begin{smallmatrix} a+b+c-n-2+j\\ a-1\end{smallmatrix}\bigr) \bigl(\begin{smallmatrix} b+c-l-j-1\\ b\end{smallmatrix}\bigr) \\
& = & S(m-2,l+2,a-1,b,c+1)\equiv 0\imod 2.
\end{eqnarray*}

Again, we note that the upper sum is zero if $a=0$ and if $a>0$ we apply the induction hypothesis and obtain the latter congruence.

We have reached the congruence $S(m,l,a,b,c)\equiv S_{6}\imod 2$. Finally, by considering the sum $S_{6}$ we see that the summand for $j=0$ is zero and so
\begin{eqnarray*}
S_{6} & = & \sum_{j=1}^{[\frac{m}{2}]}\bigl(\begin{smallmatrix} m-2-j\\ j-1\end{smallmatrix}\bigr) \bigl(\begin{smallmatrix} a+b+c-n-2+j\\ a\end{smallmatrix}\bigr) \bigl(\begin{smallmatrix} b+c-l-j-1\\ b\end{smallmatrix}\bigr) \\
& = & \sum_{j=0}^{[\frac{m-2}{2}]}\bigl(\begin{smallmatrix} m-3-j\\ j\end{smallmatrix}\bigr) \bigl(\begin{smallmatrix} a+b+c-n-1+j\\ a\end{smallmatrix}\bigr) \bigl(\begin{smallmatrix} b+c-l-j-2\\ b\end{smallmatrix}\bigr).
\end{eqnarray*}

If $m-2$ is odd, then $[\frac{m-2}{2}]=[\frac{m-3}{2}]$. If $m-2$ is even, then $[\frac{m-2}{2}]=[\frac{m-3}{2}]+1$, but, as in the case of the sum $S_{4}$, for $m-2=2r$ ($r\geq 1$ since $m\geq 3$) the first factor of the summand obtained for $j=[\frac{m-2}{2}]=r$ equals ${r-1\choose r}=0$. We conclude that $S_{6}$ is equal to the sum
\[\sum_{j=0}^{[\frac{m-3}{2}]}\bigl(\begin{smallmatrix} m-3-j\\ j\end{smallmatrix}\bigr) \bigl(\begin{smallmatrix} a+b+c-n-1+j\\ a\end{smallmatrix}\bigr) \bigl(\begin{smallmatrix} b+c-l-j-2\\ b\end{smallmatrix}\bigr) 
=S(m-3,l+3,a,b,c+1)\equiv 0\imod 2,\]
by the induction hypothesis. Hence, $S(m,l,a,b,c)\equiv 0\imod 2$ and the proof of the Lemma \ref{l3} is completed.
\end{proof}

\begin{proposition}\label{p4} Let $m,l\in \mathbb{N}_{0}$ such that $m+l=n+2$. Then
\[g_{m,l}=\sum_{j=1}^{[\frac{m}{2}]}\bigl(\begin{smallmatrix} m-j\\ j\end{smallmatrix}\bigr) g_{m-2j,l+j}.\]
\end{proposition}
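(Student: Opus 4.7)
The plan is to reduce the identity to a term-by-term comparison of coefficients and invoke Lemma \ref{l3} directly.

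First I would unpack both sides using Definition \ref{d2}. The polynomial $g_{m-2j,l+j}$ is a sum over triples $(a,b,c)\in\mathbb{N}_{0}^{3}$ satisfying $a+2b+3c = n+1+(m-2j)+2(l+j) = n+1+m+2l$, i.e.\ the same degree condition as for $g_{m,l}$. Hence every $g_{m-2j,l+j}$ on the right-hand side is supported on exactly the same set of monomials as $g_{m,l}$, and one may freely interchange the two summations in the right-hand side
\[
\sum_{j=1}^{[m/2]}\bigl(\begin{smallmatrix} m-j\\ j\end{smallmatrix}\bigr)g_{m-2j,l+j}.
\]

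Next I would rewrite the coefficient of a fixed monomial $w_{1}^{a}w_{2}^{b}w_{3}^{c}$ (with $a+2b+3c=n+1+m+2l$) in $g_{m-2j,l+j}$. By Definition \ref{d2}, that coefficient equals $\bigl(\begin{smallmatrix}a+b+c-(m-2j)-(l+j)\\ a\end{smallmatrix}\bigr)\bigl(\begin{smallmatrix}b+c-(l+j)\\ b\end{smallmatrix}\bigr)$. Using $m+l=n+2$, this simplifies to $\bigl(\begin{smallmatrix}a+b+c-n-2+j\\ a\end{smallmatrix}\bigr)\bigl(\begin{smallmatrix}b+c-l-j\\ b\end{smallmatrix}\bigr)$. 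Similarly the coefficient of the same monomial in $g_{m,l}$ is $\bigl(\begin{smallmatrix}a+b+c-n-2\\ a\end{smallmatrix}\bigr)\bigl(\begin{smallmatrix}b+c-l\\ b\end{smallmatrix}\bigr)$, which is exactly the $j=0$ summand in Lemma \ref{l3}.

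Therefore the claimed identity is equivalent, after comparing coefficients of $w_{1}^{a}w_{2}^{b}w_{3}^{c}$ for every admissible triple $(a,b,c)$, to the mod $2$ congruence
\[
\bigl(\begin{smallmatrix}a+b+c-n-2\\ a\end{smallmatrix}\bigr)\bigl(\begin{smallmatrix}b+c-l\\ b\end{smallmatrix}\bigr)\equiv\sum_{j=1}^{[m/2]}\bigl(\begin{smallmatrix}m-j\\ j\end{smallmatrix}\bigr)\bigl(\begin{smallmatrix}a+b+c-n-2+j\\ a\end{smallmatrix}\bigr)\bigl(\begin{smallmatrix}b+c-l-j\\ b\end{smallmatrix}\bigr)\imod 2,
\]
which is precisely the second form of Lemma \ref{l3}. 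Invoking that lemma (which does all the combinatorial work) completes the proof. No obstacle is expected beyond bookkeeping; the hard step is really Lemma \ref{l3} itself, which has already been established.
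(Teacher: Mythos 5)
Your proof is correct and essentially identical to the paper's: both unpack $g_{m,l}$ and $g_{m-2j,l+j}$ via Definition \ref{d2}, observe that all terms are supported on the monomials with $a+2b+3c=n+1+m+2l$, simplify the coefficients using $m+l=n+2$, and reduce the claim to the coefficientwise mod $2$ congruence that is exactly the second form of Lemma \ref{l3}. The only difference is presentational (you compare coefficients, the paper rewrites $g_{m,l}$ step by step and interchanges summations), but the substance is the same.
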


\begin{proof} According to Lemma \ref{l3}
\begin{eqnarray*}
g_{m,l} & = & \sum_{a+2b+3c=n+1+m+2l}\bigl(\begin{smallmatrix} a+b+c-m-l\\ a\end{smallmatrix}\bigr) \bigl(\begin{smallmatrix} b+c-l\\ b\end{smallmatrix}\bigr) w_{1}^{a}w_{2}^{b}w_{3}^{c}\\
& = & \sum_{a+2b+3c=n+1+m+2l}\bigl(\begin{smallmatrix} a+b+c-n-2\\ a\end{smallmatrix}\bigr) \bigl(\begin{smallmatrix} b+c-l\\ b \end{smallmatrix}\bigr) w_{1}^{a}w_{2}^{b}w_{3}^{c}\\
& = & \sum_{a+2b+3c=n+1+m+2l}\sum_{j=1}^{[\frac{m}{2}]}\bigl(\begin{smallmatrix} m-j\\ j\end{smallmatrix}\bigr) \bigl(\begin{smallmatrix} a+b+c-n-2+j\\ a\end{smallmatrix}\bigr) \bigl(\begin{smallmatrix} b+c-l-j\\ b\end{smallmatrix}\bigr) w_{1}^{a}w_{2}^{b}w_{3}^{c}\\
& = & \sum_{j=1}^{[\frac{m}{2}]}\bigl(\begin{smallmatrix} m-j\\ j\end{smallmatrix}\bigr) \sum_{a+2b+3c=n+1+m+2l}\bigl(\begin{smallmatrix} a+b+c-n-2+j\\ a\end{smallmatrix}\bigr) \bigl(\begin{smallmatrix} b+c-l-j\\ b\end{smallmatrix}\bigr) w_{1}^{a}w_{2}^{b}w_{3}^{c}.
\end{eqnarray*}

By Definition \ref{d2},
\[\sum_{a+2b+3c=n+1+m+2l}\bigl(\begin{smallmatrix} a+b+c-n-2+j\\ a\end{smallmatrix}\bigr) \bigl(\begin{smallmatrix} b+c-l-j\\ b\end{smallmatrix}\bigr) w_{1}^{a}w_{2}^{b}w_{3}^{c}\]
\[=\sum_{a+2b+3c=n+1+m-2j+2(l+j)}\bigl(\begin{smallmatrix} a+b+c-m+2j-l-j\\ a\end{smallmatrix}\bigr) \bigl(\begin{smallmatrix} b+c-(l+j)\\ b \end{smallmatrix}\bigr) w_{1}^{a}w_{2}^{b}w_{3}^{c}
=g_{m-2j,l+j}\]
and the proposition follows.
\end{proof}

In the following Proposition \ref{p5} we give some convenient presentations for $S$-polynomials of elements of $G$. Recall that (for a fixed monomial ordering) the $S$-polynomial of polynomials $f,g\in \mathbb{Z}_{2}[x_{1},x_{2},\dots,x_{k}]$ is given by
\[S(f,g)=\frac{L}{\mathrm{LT}(f)}\cdot f+\frac{L}{\mathrm{LT}(g)}\cdot g,\]
where $L=\mathrm{lcm}(\mathrm{LT}(f),\mathrm{LT}(g))$ denotes the least common multiple of $\mathrm{LT}(f)$ and $\mathrm{LT}(g)$.

\begin{lemma}\label{l5} Let $m,l\in \mathbb{N}_{0}$.

\noindent$\mathrm{(a)}$ If $r\in \mathbb{N}$ is such that $m+l<m+r+l\leq n+1$, then
\[S(g_{m,l},g_{m+r,l})=\sum_{i=0}^{r-1}w_{1}^{i}w_{2}^{r-1-i}(g_{m+2+i,l}+g_{m+i,l+1}).\]
$\mathrm{(b)}$ If $s\in \mathbb{N}$ is such that $m+l<m+l+s\leq n+1$, then
\[S(g_{m,l},g_{m,l+s})=\sum_{j=0}^{s-1}w_{1}^{j}w_{3}^{s-1-j}g_{m+1,l+1+j}.\]
$\mathrm{(c)}$ If $m+l\leq n+1$ and if $s\in \mathbb{N}$ is such that $m\geq s$, then
\[S(g_{m,l},g_{m-s,l+s})=\sum_{j=0}^{s-1}w_{2}^{j}w_{3}^{s-1-j}g_{m-1-j,l+2+j}.\]
\end{lemma}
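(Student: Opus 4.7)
The plan is to first reduce each $S$-polynomial to a short two-term expression using Proposition~\ref{p2}, then prove three \emph{unit-step} identities, and finally telescope in characteristic $2$. Since Proposition~\ref{p2} gives $\mathrm{LT}(g_{m,l})=w_{1}^{n+1-m-l}w_{2}^{m}w_{3}^{l}$, direct inspection of the relevant least common multiples (noting, for part~(c), that the two leading terms share the same $w_{1}$-exponent $n+1-m-l$, so the lcm is $w_{1}^{n+1-m-l}w_{2}^{m}w_{3}^{l+s}$) yields
\[S(g_{m,l},g_{m+r,l})=w_{2}^{r}g_{m,l}+w_{1}^{r}g_{m+r,l},\quad S(g_{m,l},g_{m,l+s})=w_{3}^{s}g_{m,l}+w_{1}^{s}g_{m,l+s},\]
\[S(g_{m,l},g_{m-s,l+s})=w_{3}^{s}g_{m,l}+w_{2}^{s}g_{m-s,l+s}.\]

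Next I would establish the three ``unit-step'' identities
\[
\mathrm{(i)}\ w_{2}g_{m,l}+w_{1}g_{m+1,l}=g_{m+2,l}+g_{m,l+1},\quad \mathrm{(ii)}\ w_{3}g_{m,l}+w_{1}g_{m,l+1}=g_{m+1,l+1},
\]
\[
\mathrm{(iii)}\ w_{3}g_{m,l}+w_{2}g_{m-1,l+1}=g_{m-1,l+2}\quad (m\geq 1),
\]
by comparing the coefficient of an arbitrary monomial $w_{1}^{a}w_{2}^{b}w_{3}^{c}$ on both sides using Definition~\ref{d2}. In each case the two sides are homogeneous of the same degree ($n+3+m+2l$ throughout), so the task reduces to a single mod~$2$ coefficient congruence, which in turn follows from the Pascal relations recalled just before Proposition~\ref{p3}. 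For (i), setting $X:=a+b+c-m-l-1$ and $Y:=b+c-l$, the four terms rearrange to
\[
\binom{X}{a}\!\left[\binom{Y-1}{b-1}+\binom{Y-1}{b}\right]+\binom{Y}{b}\!\left[\binom{X-1}{a-1}+\binom{X-1}{a}\right]\equiv \binom{X}{a}\binom{Y}{b}+\binom{X}{a}\binom{Y}{b}\equiv 0\imod 2;
\]
identities (ii) and (iii) are strictly easier, each requiring only a single Pascal collapse (in the $a$-variable for (ii) and in the $b$-variable for (iii)).

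Finally I would telescope in characteristic $2$. For~(a), expanding the right-hand side of
\[
w_{2}^{r}g_{m,l}+w_{1}^{r}g_{m+r,l}=\sum_{i=0}^{r-1}w_{1}^{i}w_{2}^{r-1-i}\bigl(w_{2}g_{m+i,l}+w_{1}g_{m+i+1,l}\bigr)
\]
gives $\sum_{i=0}^{r-1}\bigl(w_{1}^{i}w_{2}^{r-i}g_{m+i,l}+w_{1}^{i+1}w_{2}^{r-1-i}g_{m+i+1,l}\bigr)$, a sum in which every term other than $w_{2}^{r}g_{m,l}$ and $w_{1}^{r}g_{m+r,l}$ occurs exactly twice and hence cancels mod~$2$; substituting (i) then delivers~(a). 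Exactly analogous telescopings in the pairs $(w_{1},w_{3})$ and $(w_{2},w_{3})$, combined with (ii) and (iii) respectively, produce (b) and (c).

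The main obstacle I anticipate is the mod~$2$ coefficient bookkeeping for the three unit-step identities, especially (i) with its four summands; once those are in hand, both the telescoping step and the final substitutions are essentially automatic in characteristic~$2$.
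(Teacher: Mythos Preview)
Your proposal is correct and essentially coincides with the paper's proof: the paper also reduces the $S$-polynomial to $w_{2}^{r}g_{m,l}+w_{1}^{r}g_{m+r,l}$ via Proposition~\ref{p2}, verifies the same unit-step identity $w_{2}g_{m,l}+w_{1}g_{m+1,l}=g_{m+2,l}+g_{m,l+1}$ by the same Pascal coefficient collapse, and then builds up the general case (by induction on $r$, which is just the inductive phrasing of your telescoping sum). Parts~(b) and~(c) are handled identically via your identities (ii) and (iii).
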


\begin{proof} We shall prove the part (a) only. The proofs of (b) and (c) are similar. Observe that, according to Proposition \ref{p2}, $\textrm{LT}(g_{m,l})=w_{1}^{n+1-m-l}w_{2}^{m}w_{3}^{l}$ and $\textrm{LT}(g_{m+r,l})=w_{1}^{n+1-m-r-l}w_{2}^{m+r}w_{3}^{l}$. So we have \[\textrm{lcm}(\textrm{LT}(g_{m,l}),\textrm{LT}(g_{m+r,l}))=w_{1}^{n+1-m-l}w_{2}^{m+r}w_{3}^{l},\] implying
\[S(g_{m,l},g_{m+r,l})=w_{2}^{r}g_{m,l}+w_{1}^{r}g_{m+r,l}.\]

The proof is by induction on $r$. For $r=1$, we need to verify the equality $S(g_{m,l},g_{m+1,l})=g_{m,l+1}+g_{m+2,l}$. We have
\[S(g_{m,l},g_{m+1,l})=w_{2}g_{m,l}+w_{1}g_{m+1,l}\]
\[=\mkern-18mu\sum_{a+2b+3c= \atop= n+1+m+2l}\mkern-18mu\bigl(\begin{smallmatrix} a+b+c-m-l\\ a\end{smallmatrix}\bigr) \bigl(\begin{smallmatrix} b+c-l\\ b\end{smallmatrix}\bigr) w_{1}^{a}w_{2}^{b+1}w_{3}^{c}
+\mkern-36mu\sum_{a+2b+3c=\atop =n+1+m+1+2l}\mkern-18mu\bigl(\begin{smallmatrix} a+b+c-m-1-l\\ a\end{smallmatrix}\bigr) \bigl(\begin{smallmatrix} b+c-l\\ b \end{smallmatrix}\bigr) w_{1}^{a+1}w_{2}^{b}w_{3}^{c}\]
\[=\mkern-18mu\sum_{a+2b+3c=\atop =n+m+2l+3}\mkern-18mu\bigl(\begin{smallmatrix} a+b+c-m-l-1\\ a\end{smallmatrix}\bigr) \bigl(\begin{smallmatrix} b+c-l-1\\ b-1\end{smallmatrix}\bigr) w_{1}^{a}w_{2}^{b}w_{3}^{c}
+\mkern-18mu\sum_{a+2b+3c=\atop =n+m+2l+3}\mkern-18mu\bigl(\begin{smallmatrix} a+b+c-m-l-2\\ a-1\end{smallmatrix}\bigr) \bigl(\begin{smallmatrix} b+c-l\\ b\end{smallmatrix}\bigr) w_{1}^{a}w_{2}^{b}w_{3}^{c}.\]
Also, 
\[\bigl(\begin{smallmatrix} a+b+c-m-l-1\\ a\end{smallmatrix}\bigr) \bigl(\begin{smallmatrix} b+c-l-1\\ b-1\end{smallmatrix}\bigr) +\bigl(\begin{smallmatrix} a+b+c-m-l-2\\ a-1\end{smallmatrix}\bigr) \bigl(\begin{smallmatrix} b+c-l\\ b\end{smallmatrix}\bigr) \]
\[\equiv \bigl(\begin{smallmatrix} a+b+c-m-l-1\\ a\end{smallmatrix}\bigr) \bigl(\begin{smallmatrix} b+c-l-1\\ b-1\end{smallmatrix}\bigr) +\bigl(\begin{smallmatrix} a+b+c-m-l-1\\ a\end{smallmatrix}\bigr) \bigl(\begin{smallmatrix} b+c-l\\ b\end{smallmatrix}\bigr) \]
\[+\bigl(\begin{smallmatrix} a+b+c-m-l-1\\ a\end{smallmatrix}\bigr) \bigl(\begin{smallmatrix} b+c-l\\ b\end{smallmatrix}\bigr) +\bigl(\begin{smallmatrix} a+b+c-m-l-2\\ a-1\end{smallmatrix}\bigr) \bigl(\begin{smallmatrix} b+c-l\\ b\end{smallmatrix}\bigr) \]
\[\equiv \bigl(\begin{smallmatrix} a+b+c-m-l-1\\ a\end{smallmatrix}\bigr) \bigl(\begin{smallmatrix} b+c-l-1\\ b\end{smallmatrix}\bigr) +\bigl(\begin{smallmatrix} a+b+c-m-l-2\\ a\end{smallmatrix}\bigr) \bigl(\begin{smallmatrix} b+c-l\\ b\end{smallmatrix}\bigr) \]
and we obtain:
\begin{eqnarray*}
S(g_{m,l},g_{m+1,l})& = &
\sum_{a+2b+3c=\atop =n+m+2l+3}\bigl(\begin{smallmatrix} a+b+c-m-l-1\\ a\end{smallmatrix}\bigr) \bigl(\begin{smallmatrix} b+c-l-1\\ b \end{smallmatrix}\bigr) w_{1}^{a}w_{2}^{b}w_{3}^{c}\\
& + & \sum_{a+2b+3c=\atop=n+m+2l+3}\bigl(\begin{smallmatrix} a+b+c-m-l-2\\ a\end{smallmatrix}\bigr) \bigl(\begin{smallmatrix} b+c-l\\ b \end{smallmatrix}\bigr) w_{1}^{a}w_{2}^{b}w_{3}^{c}\\
& = & g_{m,l+1}+g_{m+2,l}.
\end{eqnarray*}

For the induction step we take $r\geq 2$ and calculate:
\[S(g_{m,l},g_{m+r,l})=w_{2}^{r}g_{m,l}+w_{1}^{r}g_{m+r,l}=w_{2}^{r}g_{m,l}+2w_{1}^{r-1}w_{2}g_{m+r-1,l}+w_{1}^{r}g_{m+r,l}\]
\[=w_{2}S(g_{m,l},g_{m+r-1,l})+w_{1}^{r-1}S(g_{m+r-1,l},g_{m+r,l})\]
\[=w_{2}\DS\sum_{i=0}^{r-2}w_{1}^{i}w_{2}^{r-2-i}(g_{m+2+i,l}+g_{m+i,l+1})+w_{1}^{r-1}(g_{m+r+1,l}+g_{m+r-1,l+1})\]
\[=\DS\sum_{i=0}^{r-1}w_{1}^{i}w_{2}^{r-1-i}(g_{m+2+i,l}+g_{m+i,l+1}),\]
by the induction hypothesis.
\end{proof}

Note that the previous lemma holds also for $r=0$ ($s=0$) since by definition $S(f,f)=0$ and the sums on the right hand side of the equalities are empty.

\begin{proposition}\label{p5} Let $m,l,r,s\in \mathbb{N}_{0}$.

\noindent$\mathrm{(a)}$ If $m+l<m+l+r+s\leq n+1$, then
\[S(g_{m,l},g_{m+r,l+s})=\]
\[=\DS\sum_{i=0}^{r-1}w_{1}^{s+i}w_{2}^{r-1-i}(g_{m+2+i,l+s}+g_{m+i,l+s+1})+
\DS\sum_{j=0}^{s-1}w_{1}^{j}w_{2}^{r}w_{3}^{s-1-j}g_{m+1,l+1+j}.\]
$\mathrm{(b)}$ If $l\geq s$, $r\geq s$ and $m+r+l-s\leq n+1$, then
\[S(g_{m,l},g_{m+r,l-s})=\]
\[=\DS\sum_{i=0}^{r-s-1}w_{1}^{i}w_{2}^{r-1-i}(g_{m+2+i,l}+g_{m+i,l+1})+
\DS\sum_{j=0}^{s-1}w_{1}^{r-s}w_{2}^{j}w_{3}^{s-1-j}g_{m+r-1-j,l-s+2+j}.\]
$\mathrm{(c)}$ If $l\geq s$, $r<s$ and $m+l\leq n+1$, then
\[S(g_{m,l},g_{m+r,l-s})=\]
\[=\DS\sum_{i=0}^{s-r-1}w_{1}^{i}w_{2}^{r}w_{3}^{s-r-1-i}g_{m+1,l-s+r+1+i}+
\DS\sum_{j=0}^{r-1}w_{2}^{j}w_{3}^{s-1-j}g_{m+r-1-j,l-s+2+j}.\]
\end{proposition}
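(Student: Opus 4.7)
The plan is to prove each part of Proposition \ref{p5} by introducing an appropriate intermediate polynomial $g_{m',l'}\in G$ and expressing the desired $S$-polynomial as a monomial combination of two $S$-polynomials already covered by Lemma \ref{l5}. In every case, the combination identity will be verified by computing the leading terms directly from Proposition \ref{p2} and observing that two identical ``cross terms'' cancel modulo $2$.

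For part $\mathrm{(a)}$, I would take the intermediate $g_{m,l+s}$, which lies in $G$ since $m+l+s\le m+l+r+s\le n+1$. Computing leading terms gives $S(g_{m,l},g_{m+r,l+s})=w_{2}^{r}w_{3}^{s}g_{m,l}+w_{1}^{r+s}g_{m+r,l+s}$, and similar direct formulas for the two intermediate $S$-polynomials. A short calculation then yields
\[S(g_{m,l},g_{m+r,l+s})=w_{2}^{r}\cdot S(g_{m,l},g_{m,l+s})+w_{1}^{s}\cdot S(g_{m,l+s},g_{m+r,l+s}),\]
with two copies of $w_{1}^{s}w_{2}^{r}g_{m,l+s}$ cancelling. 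Applying Lemma \ref{l5}$\mathrm{(b)}$ to the first term and Lemma \ref{l5}$\mathrm{(a)}$ to the second produces precisely the two sums in the statement (the degenerate cases $r=0$ and $s=0$ reduce immediately to Lemma \ref{l5}).

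For part $\mathrm{(b)}$, with $r\ge s$, I would use the intermediate $g_{m+r-s,l}\in G$ (valid since $(m+r-s)+l\le n+1$) and verify
\[S(g_{m,l},g_{m+r,l-s})=w_{2}^{s}\cdot S(g_{m,l},g_{m+r-s,l})+w_{1}^{r-s}\cdot S(g_{m+r-s,l},g_{m+r,l-s}),\]
then apply Lemma \ref{l5}$\mathrm{(a)}$ (with $r$ replaced by $r-s$) to the first term and Lemma \ref{l5}$\mathrm{(c)}$ (applied after swapping arguments via $S(f,g)=S(g,f)$, with parameters $m+r,\,l-s,\,s$) to the second. For part $\mathrm{(c)}$, with $r<s$, I would use the intermediate $g_{m,l-s+r}$, which is in $G$ because $m+(l-s+r)<m+l\le n+1$ and $l\ge s$ gives $l-s+r\ge 0$. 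The corresponding identity is
\[S(g_{m,l},g_{m+r,l-s})=w_{2}^{r}\cdot S(g_{m,l-s+r},g_{m,l})+w_{3}^{s-r}\cdot S(g_{m+r,l-s},g_{m,l-s+r}),\]
with cancellation of two copies of $w_{2}^{r}w_{3}^{s-r}g_{m,l-s+r}$. Apply Lemma \ref{l5}$\mathrm{(b)}$ (with $s-r$ in place of $s$) to the first term and Lemma \ref{l5}$\mathrm{(c)}$ (with $r$ in place of $s$) to the second.

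The main obstacle I expect is choosing the correct intermediate polynomial and the correct monomial multipliers in each case, so that (i) both produced $S$-polynomials fall within the scope of Lemma \ref{l5}, and (ii) the unwanted cross terms actually cancel in $\mathbb{Z}_{2}$; once these choices are identified, the proof reduces to routine bookkeeping of indices and exponents, together with the verification that the intermediate $g_{m',l'}$ satisfies $m'+l'\le n+1$ under the hypotheses of each part.
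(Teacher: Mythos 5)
Your proposal is correct and uses exactly the decomposition the paper uses: for part (a), writing $S(g_{m,l},g_{m+r,l+s})=w_{2}^{r}S(g_{m,l},g_{m,l+s})+w_{1}^{s}S(g_{m,l+s},g_{m+r,l+s})$ with the cross terms cancelling mod $2$, then invoking Lemma \ref{l5}(a) and (b). The paper omits parts (b) and (c) as ``completely analogous,'' and your choices of intermediate generators $g_{m+r-s,l}$ and $g_{m,l-s+r}$ fill in those omitted cases correctly, with the stated applications of Lemma \ref{l5}(a), (b), and (c) (after swapping arguments) landing within their hypotheses.
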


\begin{proof} Again, we only prove the part (a), the proofs of (b) and (c) being completely analogous. Using Proposition \ref{p2}, we easily obtain that
\[\textrm{lcm}(\textrm{LT}(g_{m,l}),\textrm{LT}(g_{m+r,l+s}))=w_{1}^{n+1-m-l}w_{2}^{m+r}w_{3}^{l+s},\] and so
\[S(g_{m,l},g_{m+r,l+s})=w_{2}^{r}w_{3}^{s}g_{m,l}+w_{1}^{r+s}g_{m+r,l+s}.\]
Moving on, we have
\[S(g_{m,l},g_{m+r,l+s})=w_{2}^{r}w_{3}^{s}g_{m,l}+2w_{2}^{r}w_{1}^{s}g_{m,l+s}+w_{1}^{r+s}g_{m+r,l+s}\]
\[=w_{2}^{r}S(g_{m,l},g_{m,l+s})+w_{1}^{s}S(g_{m,l+s},g_{m+r,l+s})\]
\[=\DS\sum_{j=0}^{s-1}w_{1}^{j}w_{2}^{r}w_{3}^{s-1-j}g_{m+1,l+1+j}+
\DS\sum_{i=0}^{r-1}w_{1}^{s+i}w_{2}^{r-1-i}(g_{m+2+i,l+s}+g_{m+i,l+s+1}),\]
by parts (a) and (b) of Lemma \ref{l5}.
\end{proof}

Observe that in the previous proposition the $S$-polynomials of elements of $G$ are presented as some functions of polynomials $g_{m,l}$ where $m+l\leq n+2$. Those for which $m+l\leq n+1$ are elements of $G$ and those for which $m+l=n+2$ can be written as sums of elements of $G$ according to Proposition \ref{p4}. 

In order to prove that $G$ is a basis for the ideal $I_{3,n}$, i.e., $I_{G}=I_{3,n}$, we list the following equalities:
\begin{equation}\label{fff1} g_{m+2,l}=g_{m,l+1}+w_{2}g_{m,l}+w_{1}g_{m+1,l},
\end{equation}
\begin{equation}\label{fff2} g_{m+1,l+1}=w_{3}g_{m,l}+w_{1}g_{m,l+1},
\end{equation}
\begin{equation}\label{fff3} g_{m-1,l+2}=w_{3}g_{m,l}+w_{2}g_{m-1,l+1}.
\end{equation}
The first one is obtained in the proof of Lemma \ref{l5} as the induction base and the other two are actually parts (b) and (c) of that lemma for $s=1$. 

\begin{proposition}\label{pr11} $I_{G}=I_{3,n}$.
\end{proposition}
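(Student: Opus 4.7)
The inclusion $I_{3,n} \subseteq I_G$ has already been established in Proposition \ref{p3}, so it suffices to prove the reverse inclusion $I_G \subseteq I_{3,n}$, equivalently that every generator $g_{m,l}$ of $I_G$ lies in $I_{3,n}$. The approach is an induction on $k := m+l$ driven by the three recursions (\ref{fff1}), (\ref{fff2}), (\ref{fff3}): in each of them the left-hand side has strictly larger index sum than every $g_{*,*}$ appearing on the right, so each identity is tailor-made for reducing the membership question for a single $g_{m,l}$ to the same question for $g$'s with smaller index sum.

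For the base of the induction I will collect $g_{0,0},\, g_{1,0},\, g_{2,0} \in I_{3,n}$ directly from Proposition \ref{p3} (the first is $\overline{w}_{n+1}$, and the other two arise via the identities $\overline{w}_{n+2}=w_1 g_{0,0} + g_{1,0}$ and $\overline{w}_{n+3}=w_1^2 g_{0,0} + g_{2,0}$ established there). Rearranging (\ref{fff1}) at $(m,l)=(0,0)$ then yields $g_{0,1} = g_{2,0} + w_2 g_{0,0} + w_1 g_{1,0} \in I_{3,n}$, which disposes of every remaining case with $m+l \leq 1$.

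For the inductive step I fix $2 \leq k \leq n+1$, assume $g_{m',l'} \in I_{3,n}$ whenever $m'+l' < k$, and treat an arbitrary $(m,l)$ with $m+l=k$ by a three-way case split on the boundary: if $m,l \geq 1$ I invoke (\ref{fff2}) at $(m-1,l-1)$ to obtain $g_{m,l} = w_3 g_{m-1,l-1} + w_1 g_{m-1,l}$; if $l=0$ (forcing $m=k$) I invoke (\ref{fff1}) at $(m-2,0)$ to obtain $g_{m,0} = g_{m-2,1} + w_2 g_{m-2,0} + w_1 g_{m-1,0}$; and if $m=0$ (forcing $l=k$) I invoke (\ref{fff3}) at $(1,k-2)$ to obtain $g_{0,k} = w_3 g_{1,k-2} + w_2 g_{0,k-1}$. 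In each case every $g_{*,*}$ on the right-hand side has index sum strictly less than $k$, so the inductive hypothesis places $g_{m,l}$ in $I_{3,n}$, completing the induction up to $k=n+1$. The only point requiring care is selecting the correct recursion for each boundary situation so that every right-hand side remains in the region $m'+l' < k$; this case analysis is the sole (minor) obstacle.
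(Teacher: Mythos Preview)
Your proof is correct and follows essentially the same approach as the paper's: induction on $m+l$ with the same base cases, using the recursions (\ref{fff1})--(\ref{fff3}) to reduce the index sum. The only difference is in the case split of the inductive step---you use (\ref{fff2}) for the generic case $m,l\geq 1$ and handle the two boundaries $l=0$ and $m=0$ separately, whereas the paper splits according to $l=0$, $l=1$, $l\geq 2$; both organizations work equally well.
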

\begin{proof} According to Proposition \ref{p3}, $I_{3,n}\subseteq I_{G}$, so it remains to prove that $g\in I_{3,n}$ for all $g\in G$, i.e., $g_{m,l}\in I_{3,n}$ for all $m,l\in \mathbb{N}_{0}$ such that $m+l\leq n+1$. The proof is by induction on $m+l$. We already have that $g_{0,0}=\overline{w}_{n+1}\in I_{3,n}$. Also, in the proof of Proposition \ref{p3} we established that \[g_{1,0}=w_{1}g_{0,0}+\overline{w}_{n+2}=w_{1}\overline{w}_{n+1}+\overline{w}_{n+2}\in I_{3,n}\]
and that $g_{2,0}=w_{1}^{2}g_{0,0}+\overline{w}_{n+3}\in I_{3,n}$.
By formula (\ref{fff1}), $g_{2,0}=g_{0,1}+w_{2}g_{0,0}+w_{1}g_{1,0}$ and so
\[g_{0,1}=g_{2,0}+w_{2}g_{0,0}+w_{1}g_{1,0}\in I_{3,n}.\]
Therefore, $g_{m,l}\in I_{3,n}$ if $m+l\leq 1$.

Now, take $g_{m,l}\in G$ such that $m+l\geq 2$ and assume that $g_{\widetilde{m},\widetilde{l}}\in I_{3,n}$ if $\widetilde{m}+\widetilde{l}<m+l$. If $l=0$, then $m\geq 2$ and by formula (\ref{fff1}) we have
\[g_{m,0}=g_{m-2,1}+w_{2}g_{m-2,0}+w_{1}g_{m-1,0}\in I_{3,n}.\]
If $l=1$, formula (\ref{fff2}) gives us
\[g_{m,1}=w_{3}g_{m-1,0}+w_{1}g_{m-1,1}\in I_{3,n}.\]
Finally, if $l\geq 2$, we use formula (\ref{fff3}) and obtain
\[g_{m,l}=w_{3}g_{m+1,l-2}+w_{2}g_{m,l-1}\in I_{3,n},\]
by the induction hypothesis.
\end{proof}

\medskip

Our next task is to prove that $G$ is a Gr\"obner basis. We shall use the following definition and theorem (see \cite[p.\,219]{Becker}). It is assumed that a monomial ordering $\preceq$ on $\mathbb{Z}_{2}[x_{1},x_{2},\dots,x_{k}]$ is fixed.

\begin{definition}\label{de2} Let $F$ be a finite subset of $\mathbb{Z}_{2}[x_{1},x_{2},\dots,x_{k}]$, $f\in \mathbb{Z}_{2}[x_{1},x_{2},\dots,x_{k}]$ a nonzero polynomial and $t$ a fixed monomial. If $f$ can be written as a finite sum of the form $\DS\sum_{i}m_{i}f_{i}$ where $f_{i}\in F$ and $m_{i}\in \mathbb{Z}_{2}[x_{1},x_{2},\dots,x_{k}]$ are nonzero monomials such that $\mathrm{LT}(m_{i}f_{i})\preceq t$ for all $i$, we say that $\DS\sum_{i}m_{i}f_{i}$ is a \em{$t$-representation of $f$ with respect to $F$}.
\end{definition}

\begin{theorem}\label{th3} Let $F$ be a finite subset of $\mathbb{Z}_{2}[x_{1},x_{2},\dots,x_{k}]$, $0\notin F$. If for all $f_{1},f_{2}\in F$, $S(f_{1},f_{2})$ either equals zero or has a $t$-representation with respect to $F$ for some monomial
$t\prec \mathrm{lcm}(\mathrm{LT}(f_{1}),\mathrm{LT}(f_{2}))$, then $F$ is a Gr\"obner basis.
\end{theorem}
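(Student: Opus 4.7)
The plan is to prove the contrapositive by a minimal-counterexample argument, which is essentially the standard proof of Buchberger's criterion adapted to allow for $t$-representations rather than full reductions to zero. Suppose for contradiction that $F$ were not a Gr\"obner basis. Then there exists some nonzero $f$ in the ideal generated by $F$ whose leading monomial $\mathrm{LT}(f)$ is not divisible by $\mathrm{LT}(f_i)$ for any $f_i \in F$. Since every polynomial in the ideal is a sum of monomial multiples of elements of $F$, I would write $f = \sum_i m_i f_i$ with $m_i$ nonzero monomials and $f_i \in F$ (indices may repeat), and then choose such a representation so that $t^{*} := \max_i \mathrm{LT}(m_i f_i)$ is minimal with respect to $\preceq$.

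Next I would observe that, because $\mathrm{LT}(f)$ is not divisible by any $\mathrm{LT}(f_i)$, we must have $\mathrm{LT}(f) \prec t^{*}$, so the monomial $t^{*}$ cancels out in the sum $\sum_i m_i f_i$. Working over $\mathbb{Z}_{2}$, the indices $i$ with $\mathrm{LT}(m_i f_i) = t^{*}$ can be grouped into pairs. For each pair $(i,j)$, the monomial $t^{*}$ is a common multiple of $\mathrm{LT}(f_i)$ and $\mathrm{LT}(f_j)$, hence $t^{*} = u\cdot L_{ij}$ for a monomial $u$, where $L_{ij} = \mathrm{lcm}(\mathrm{LT}(f_i),\mathrm{LT}(f_j))$. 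The key algebraic identity is then
\[
m_i f_i + m_j f_j \;=\; u\cdot S(f_i,f_j) + (\text{sum of terms with leading monomial} \prec t^{*}),
\]
obtained by substituting the definition $S(f_i,f_j) = (L_{ij}/\mathrm{LT}(f_i))f_i + (L_{ij}/\mathrm{LT}(f_j))f_j$ and noting that the two top terms of $m_i f_i$ and $m_j f_j$ both equal $t^{*}$ and therefore cancel.

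Now the hypothesis is invoked. For each pair, either $S(f_i,f_j) = 0$, in which case $m_i f_i + m_j f_j$ already has leading monomial $\prec t^{*}$, or $S(f_i,f_j)$ admits a $t_{ij}$-representation with $t_{ij}\prec L_{ij}$; multiplying through by the monomial $u$ yields a $(u\cdot t_{ij})$-representation with $u\cdot t_{ij}\prec u\cdot L_{ij} = t^{*}$. Substituting all of these back into $f = \sum_i m_i f_i$ produces a new representation in which every monomial multiple of an element of $F$ has leading monomial strictly below $t^{*}$, contradicting the minimality of $t^{*}$. Hence no such $f$ exists, and $F$ is a Gr\"obner basis.

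The main obstacle, and the one technical calculation I would spell out carefully, is the identity $m_i f_i + m_j f_j = u\, S(f_i,f_j) + (\text{lower})$: one needs to verify that the contributions of the top terms cancel exactly against $u\, S(f_i,f_j)$ and that the residual terms genuinely have leading monomial $\prec t^{*}$. Everything else is a clean minimal-element argument once this syzygy-style identity is in hand.
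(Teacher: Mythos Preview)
The paper does not actually prove this theorem; it is quoted from Becker--Weispfenning (see the reference just before the statement) and used as a black box. Your argument is the standard minimal-counterexample proof of Buchberger's criterion in its refined form allowing $t$-representations, and it is correct. One small simplification worth noting: over $\mathbb{Z}_{2}$ all leading coefficients are $1$, so for a pair with $\mathrm{LT}(m_if_i)=\mathrm{LT}(m_jf_j)=t^{*}$ one has $m_i=t^{*}/\mathrm{LT}(f_i)$, $m_j=t^{*}/\mathrm{LT}(f_j)$, and hence $m_if_i+m_jf_j=u\,S(f_i,f_j)$ exactly, with no residual lower terms; the identity you flag as the main obstacle is therefore immediate in this setting.
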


Using this theorem, we are able to prove that $G$ is a Gr\"obner basis for $I_{G}=I_{3,n}$.

\begin{theorem}\label{t3} Let $n\geq 3$. The set $G$ (see definitions {\em\ref{d2}} and {\em\ref{d3}}) is the reduced Gr\"obner basis for the ideal $I_{3,n}$ in $\mathbb{Z}_{2}[w_{1},w_{2},w_{3}]$ with respect to the grlex ordering $\preceq$.
\end{theorem}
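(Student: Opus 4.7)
The plan is to invoke Theorem \ref{th3} to show that $G$ is a Gr\"obner basis, and then verify reducedness directly from Proposition \ref{p2}. Since $I_G=I_{3,n}$ has already been established in Proposition \ref{pr11}, this will give a reduced Gr\"obner basis for $I_{3,n}$.

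For the Gr\"obner basis condition, I take two arbitrary distinct elements $g_{m,l},g_{m',l'}\in G$. After possibly interchanging them, exactly one of the three cases of Proposition \ref{p5} applies and expresses $S(g_{m,l},g_{m',l'})$ as a finite sum $\sum_\alpha \mu_\alpha g_{M_\alpha,L_\alpha}$, where each $\mu_\alpha$ is a nonzero monomial and $M_\alpha+L_\alpha\leq n+2$. The summands for which $M_\alpha+L_\alpha\leq n+1$ already lie in $G$; when $M_\alpha+L_\alpha=n+2$, I apply Proposition \ref{p4} to rewrite
\[g_{M_\alpha,L_\alpha}=\sum_{j=1}^{[M_\alpha/2]}\bigl(\begin{smallmatrix}M_\alpha-j\\ j\end{smallmatrix}\bigr)g_{M_\alpha-2j,\,L_\alpha+j},\]
and each $g_{M_\alpha-2j,L_\alpha+j}$ lies in $G$ since $(M_\alpha-2j)+(L_\alpha+j)=n+2-j\leq n+1$. (If the sum is empty, $g_{M_\alpha,L_\alpha}=0$ and there is nothing to replace.) This produces a representation of $S(g_{m,l},g_{m',l'})$ as a sum of monomial-times-generator terms with generators in $G$.

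Next I verify that this is a $t$-representation with $t\prec\mathrm{lcm}(\mathrm{LT}(g_{m,l}),\mathrm{LT}(g_{m',l'}))$. By Proposition \ref{p2}, for every $g\in G$ one has $\deg\mathrm{LT}(g)=n+1$, so the leading degree of each resulting term $\mu\cdot g$ equals $n+1+\deg\mu$ (the application of Proposition \ref{p4} does not change this because it replaces one $g_{M,L}$ by generators in $G$ of the same leading degree). In each of the three cases of Proposition \ref{p5}, a short exponent count shows that the LCM has total degree $n+1+d$, where $d=r+s$ in case (a), $d=r$ in case (b) and $d=s$ in case (c), while every monomial coefficient $\mu_\alpha$ that appears satisfies $\deg\mu_\alpha=d-1$. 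Since $\preceq$ is grlex, this strict inequality on total degrees gives the required $t\prec\mathrm{lcm}$, and Theorem \ref{th3} concludes that $G$ is a Gr\"obner basis.

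Finally, reducedness follows from Proposition \ref{p2} alone. All leading coefficients equal $1$ automatically over $\mathbb{Z}_2$. Every non-leading monomial of any $g_{m,l}$ has total degree strictly less than $n+1$, while every leading term $\mathrm{LT}(g_{m',l'})=w_1^{n+1-m'-l'}w_2^{m'}w_3^{l'}$ has total degree exactly $n+1$; hence no non-leading monomial of any element of $G$ is divisible by the leading term of another. Moreover, $\mathrm{LT}(g_{m',l'})\mid\mathrm{LT}(g_{m,l})$ would force componentwise inequalities in the exponents together with equal total degrees, so equality throughout and $(m',l')=(m,l)$. The main obstacle is the bookkeeping in the second paragraph: the three formulas of Proposition \ref{p5} must all be checked to confirm that each $\deg\mu_\alpha=d-1$, and one must make sure that the boundary case $M_\alpha+L_\alpha=n+2$ is handled uniformly by Proposition \ref{p4}. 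Once this exponent check is done, the rest is formal.
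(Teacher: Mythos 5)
Your proposal is correct and follows essentially the same approach as the paper: partition the pairs of generators into the three cases of Proposition \ref{p5}, use Proposition \ref{p4} to rewrite the $g_{M,L}$ with $M+L=n+2$ as sums of elements of $G$, observe that all leading terms of $G$ have degree $n+1$ so every term in the resulting representation has leading degree exactly one less than the degree of the lcm, and conclude by Theorem \ref{th3}. The reducedness argument is likewise the same degree comparison from Proposition \ref{p2}.
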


%\noindent{\bf Proof.}
\begin{proof} In order to apply Theorem \ref{th3}, we take two arbitrary elements of $G$, say $g_{m,l}$ and $g_{\widetilde{m},\widetilde{l}}$ ($g_{m,l}\neq g_{\widetilde{m},\widetilde{l}}$). Without loss of generality we may assume that either (i) $m<\widetilde{m}$ or else (ii) $m=\widetilde{m}$ and $l<\widetilde{l}$. We distinguish three cases.

\medskip

$1^{\circ}$ If condition (ii) holds or if $m<\widetilde{m}$ and $l\leq \widetilde{l}$, writing $\widetilde{m}=m+r$, $\widetilde{l}=l+s$, we have $m+l<m+l+r+s\leq n+1$, so the conditions of Proposition \ref{p5} (a) are satisfied implying \[S(g_{m,l},g_{\widetilde{m},\widetilde{l}})=S(g_{m,l},g_{m+r,l+s})\]
\[=\DS\sum_{i=0}^{r-1}w_{1}^{s+i}w_{2}^{r-1-i}(g_{m+2+i,l+s}+g_{m+i,l+s+1})+
\DS\sum_{j=0}^{s-1}w_{1}^{j}w_{2}^{r}w_{3}^{s-1-j}g_{m+1,l+1+j}.\]
If $m+l+r+s<n+1$, then all polynomials $g_{m,l}$ appearing in this expression are elements of $G$. If $m+l+r+s=n+1$, then $g_{m+r+1,l+s}$ and eventually $g_{m+1,l+s}$ (if $r=0$) are not in $G$. But, according to Proposition \ref{p4}, these two can be written as the sums of elements of $G$ and henceforth we consider these polynomials as the appropriate sums.

By Proposition \ref{p2} the leading terms of elements of $G$ all have the sum of the exponents equal to $n+1$. Therefore, the leading terms of the summands in the first sum all have the sum of the exponents $s+i+r-1-i+n+1=n+r+s$ and in the second $j+r+s-1-j+n+1=n+r+s$ too. We define $t=t(m,l,\widetilde{m},\widetilde{l})$ to be the maximum (with respect to $\preceq$) of all these leading terms. Hence, the above expression is a $t$-representation of $S(g_{m,l},g_{\widetilde{m},\widetilde{l}})$ w.r.t. $G$, $t$ has the sum of the exponents equal to $n+r+s$
and so \[t\prec w_{1}^{n+1-m-l}w_{2}^{m+r}w_{3}^{l+s}=
\textrm{lcm}(\textrm{LT}(g_{m,l}),\textrm{LT}(g_{\widetilde{m},\widetilde{l}})).\]

$2^{\circ}$ If $m<\widetilde{m}$, $l>\widetilde{l}$ and $\widetilde{m}-m\geq l-\widetilde{l}$, writing $\widetilde{m}=m+r$, $\widetilde{l}=l-s$, we have $l\geq s$, $r\geq s$ and $m+r+l-s\leq n+1$, i.e., the conditions of part (b) of Proposition \ref{p5} are satisfied and consequently
\[S(g_{m,l},g_{\widetilde{m},\widetilde{l}})=S(g_{m,l},g_{m+r,l-s})\]
\[=\DS\sum_{i=0}^{r-s-1}w_{1}^{i}w_{2}^{r-1-i}(g_{m+2+i,l}+g_{m+i,l+1})+
\DS\sum_{j=0}^{s-1}w_{1}^{r-s}w_{2}^{j}w_{3}^{s-1-j}g_{m+r-1-j,l-s+2+j}.\]
As in the previous case, for $m+r+l-s=n+1$ the polynomials $g_{m+r-s+1,l}$ and $g_{m+r-1-j,l-s+2+j}$ ($j=\overline{0,s-1}$) are treated as sums of elements of $G$ (obtained in Proposition \ref{p4}).

Again, we define $t$ to be the maximum of all leading terms in this expression and so we have a $t$-representation of $S(g_{m,l},g_{\widetilde{m},\widetilde{l}})$ w.r.t. $G$. Since the sum of the exponents in the leading terms is equal to $i+r-1-i+n+1=n+r$, i.e., $r-s+j+s-1-j+n+1=n+r$, we have
\[t\prec w_{1}^{n+1-m-l}w_{2}^{m+r}w_{3}^{l}=\textrm{lcm}(\textrm{LT}(g_{m,l}),\textrm{LT}(g_{\widetilde{m},\widetilde{l}})).\]

$3^{\circ}$ Finally, if $m<\widetilde{m}$, $l>\widetilde{l}$ and $\widetilde{m}-m<l-\widetilde{l}$, again we put $\widetilde{m}=m+r$, $\widetilde{l}=l-s$. In this case, $l\geq s$, $r<s$ and $m+l\leq n+1$, hence we may apply Proposition \ref{p5} (c) and obtain
\[S(g_{m,l},g_{\widetilde{m},\widetilde{l}})=S(g_{m,l},g_{m+r,l-s})\]
\[=\DS\sum_{i=0}^{s-r-1}w_{1}^{i}w_{2}^{r}w_{3}^{s-r-1-i}g_{m+1,l-s+r+1+i}+
\DS\sum_{j=0}^{r-1}w_{2}^{j}w_{3}^{s-1-j}g_{m+r-1-j,l-s+2+j}.\]
Considering this case as the previous two, we observe that the sum of the exponents in the leading terms is $i+r+s-r-1-i+n+1=n+s$, i.e., $j+s-1-j+n+1=n+s$. Defining $t$ as before, we have
\[t\prec w_{1}^{n+1-m-l+s-r}w_{2}^{m+r}w_{3}^{l}=\textrm{lcm}(\textrm{LT}(g_{m,l}),\textrm{LT}(g_{\widetilde{m},\widetilde{l}})).\]

Therefore, by Theorem \ref{th3} we conclude that $G$ is a Gr\"obner basis. According to Proposition \ref{p2}, all terms in $g_{m,l}\in G$, except the leading one, have the sum of the exponents $<n+1$ and hence they cannot be divisible by any leading term in $G$. This means that $G$ is the reduced Gr\"obner basis for $I_{3,n}$.
\end{proof}
%\medskip

Since $\textrm{LT}(g_{m,l})=w_{1}^{n+1-m-l}w_{2}^{m}w_{3}^{l}$ ($m,l\in \mathbb{N}_{0}$, $m+l\leq n+1$), we see that the set of all leading terms in $G$ is the set of all monomials with the sum of the exponents equal to $n+1$. Therefore, a monomial $w_{1}^{a}w_{2}^{b}w_{3}^{c}\in \mathbb{Z}_{2}[w_{1},w_{2},w_{3}]$ is not divisible by any of these leading terms if and only if $a+b+c\leq n$. By observing this equivalence we have proved the following corollary.

\begin{corollary}\label{c2} Let $n\geq 3$. If $w_{i}$ is the $i$-th Stiefel-Whitney class of the canonical vector bundle $\gamma_{3}$ over $G_{3,n}$, then the set $\{ w_{1}^{a}w_{2}^{b}w_{3}^{c} \mid a+b+c\leq n\}$ is a vector space basis for $H^{*}(G_{3,n};\mathbb{Z}_{2})$.
\end{corollary}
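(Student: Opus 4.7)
The plan is to apply the standard Gröbner basis fact that for any ideal $I$ in a polynomial ring with a fixed monomial ordering and any Gröbner basis $G$ of $I$, the residue classes of the monomials that are \emph{not} divisible by the leading term of any element of $G$ form a vector space basis of the quotient ring. Since $H^{*}(G_{3,n};\mathbb{Z}_{2})\cong\mathbb{Z}_{2}[w_{1},w_{2},w_{3}]/I_{3,n}$, and since Theorem \ref{t3} identifies $G$ as a (reduced) Gröbner basis for $I_{3,n}$ with respect to the grlex ordering, this reduces the corollary to a purely combinatorial description of the corresponding set of standard monomials.

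Next I would use Proposition \ref{p2}, which tells us that $\mathrm{LT}(g_{m,l})=w_{1}^{n+1-m-l}w_{2}^{m}w_{3}^{l}$ for each $(m,l)$ with $m+l\leq n+1$. As $(m,l)$ ranges over all such pairs, the triple $(n+1-m-l,m,l)$ ranges over all triples of nonnegative integers summing to $n+1$. Therefore the set of leading terms of elements of $G$ is precisely the set of \emph{all} monomials in $w_{1},w_{2},w_{3}$ of total degree equal to $n+1$.

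The final step is the divisibility check: a monomial $w_{1}^{a}w_{2}^{b}w_{3}^{c}$ is divisible by some monomial of total degree $n+1$ in the same three variables if and only if $a+b+c\geq n+1$ (since one can then choose exponents $(a',b',c')$ with $a'\leq a$, $b'\leq b$, $c'\leq c$ and $a'+b'+c'=n+1$, and conversely any divisor has total degree at most $a+b+c$). Hence the standard monomials are exactly $\{w_{1}^{a}w_{2}^{b}w_{3}^{c}\mid a+b+c\leq n\}$, which gives the claimed basis.

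There is essentially no obstacle here beyond recognizing that the work has already been done: the entire content of the corollary is packaged into Theorem \ref{t3} and Proposition \ref{p2}, and all that remains is to invoke the general Gröbner-basis/quotient-ring correspondence and to verify the elementary divisibility equivalence above.
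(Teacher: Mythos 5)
Your argument is correct and matches the paper's proof exactly: both invoke the standard bijection between standard monomials (those not divisible by any leading term of a Gröbner basis) and a vector space basis of the quotient, both use Proposition \ref{p2} to identify the set of leading terms of $G$ as all monomials of total degree $n+1$, and both then observe that a monomial is not divisible by any such leading term precisely when $a+b+c\leq n$. Your write-up merely spells out the elementary divisibility equivalence a bit more explicitly than the paper does.
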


\medskip

Let us now calculate a few elements of the Gr\"obner basis $G$. By Proposition \ref{p2}, excluding the leading term $\textrm{LT}(g_{m,l})=w_{1}^{n+1-m-l}w_{2}^{m}w_{3}^{l}$, the monomial $w_{1}^{a}w_{2}^{b}w_{3}^{c}$ appears in $g_{m,l}$ only if $a+b+c<n+1$, so then we have $c\leq b+c\leq a+b+c\leq n$ and we conclude that $a+2b+3c\leq 3n$. Since $a+2b+3c$ must be equal to $n+1+m+2l$, we see that if $n+1+m+2l>3n$ (i.e., $m+2l>2n-1$) then $g_{m,l}=\textrm{LT}(g_{m,l})=w_{1}^{n+1-m-l}w_{2}^{m}w_{3}^{l}$. In particular, we have the equalities:
\[g_{0,n+1}=w_{3}^{n+1}; \qquad g_{0,n}=w_{1}w_{3}^{n}; \qquad g_{1,n}=w_{2}w_{3}^{n}.\]
Starting from these three, we can calculate the polynomials $g_{m,n-1}$, $g_{m,n-2}$, $g_{m,n-3}$ etc. using formulas (\ref{fff1}), (\ref{fff2}) and (\ref{fff3}). Namely, from (\ref{fff2}) we have $w_{3}g_{0,n-1}=w_{1}g_{0,n}+g_{1,n}=w_{1}^{2}w_{3}^{n}+w_{2}w_{3}^{n}$, so
\[g_{0,n-1}=w_{1}^{2}w_{3}^{n-1}+w_{2}w_{3}^{n-1}.\]
Using (\ref{fff3}), one obtains $w_{3}g_{1,n-1}=w_{2}g_{0,n}+g_{0,n+1}=w_{1}w_{2}w_{3}^{n}+w_{3}^{n+1}$, implying:
\[g_{1,n-1}=w_{1}w_{2}w_{3}^{n-1}+w_{3}^{n}.\]
Applying formula (\ref{fff1}), we have \[g_{2,n-1}=g_{0,n}+w_{2}g_{0,n-1}+w_{1}g_{1,n-1}\]
\[=w_{1}w_{3}^{n}+w_{1}^{2}w_{2}w_{3}^{n-1}+w_{2}^{2}w_{3}^{n-1}+w_{1}^{2}w_{2}w_{3}^{n-1}+w_{1}w_{3}^{n}
=w_{2}^{2}w_{3}^{n-1}.\]
Continuing in the same manner, we get the following table (Table \ref{tab1} on the next page) containing the polynomials $g_{m,l}\in G$ for $l\geq n-5$ (the leading terms are marked).

\begin{sidewaystable}[hbtp]
\vskip11cm
\caption{$g_{m,l}$} \label{tab1}
\centering
\resizebox{1.0\textheight}{!}{%
\begin{tabular}{r|p{37mm}|p{37mm}|p{33mm}|p{32mm}|l|l|l|}
\backslashbox{$m$}{$l$} & $n-5$ & $n-4$ & $n-3$ & $n-2$ & $n-1$ & $n$ & $n+1$\\ \toprule
$0$    & \fbox{$w_1^6w_3^{n-5}$}$+w_1^4w_2w_3^{n-5}$ $+w_2^3w_3^{n-5}+w_3^{n-3}$ &
\fbox{$w_1^5w_3^{n-4}$}$+w_1^2w_3^{n-3}$ $+w_1w_2^2w_3^{n-4}$ &
\fbox{$w_1^4w_3^{n-3}$}$+w_1^2w_2w_3^{n-3}$ $+w_2^2w_3^{n-3}$ &
\fbox{$w_1^3w_3^{n-2}$}$+w_3^{n-1}$&
\fbox{$w_1^2w_3^{n-1}$}$+w_2w_3^{n-1}$ &
\fbox{$w_1w_3^n$} &\fbox{$w_3^{n+1}$} \\ \midrule
$1$ & \fbox{$w_1^5w_2w_3^{n-5}$}$+w_1^4w_3^{n-4}$ $+w_1w_2^3w_3^{n-5}+w_2^2w_3^{n-4}$ & \fbox{$w_1^4w_2w_3^{n-4}$}$+w_1^3w_3^{n-3}$ $+w_1^2w_2^2w_3^{n-4}+w_2^3w_3^{n-4}$ $+w_3^{n-2}$ & \fbox{$w_1^3w_2w_3^{n-3}$}$+w_1^2w_3^{n-2}$ & \fbox{$w_1^2w_2w_3^{n-2}$}$+w_1w_3^{n-1}$ $+w_2^2w_3^{n-2}$ & \fbox{$w_1w_2w_3^{n-1}$}$+w_3^n$ & \fbox{$w_2w_3^n$} &  \\ \midrule
$2$ & \fbox{$w_1^4w_2^2w_3^{n-5}$}$+w_1^2w_2^3w_3^{n-5}$ $+w_1^2w_3^{n-3}+w_2^4w_3^{n-5}$ $+w_2w_3^{n-3}$ & \fbox{$w_1^3w_2^2w_3^{n-4}$}$+w_1w_3^{n-2}$ $+w_2^2w_3^{n-3}$ & \fbox{$w_1^2w_2^2w_3^{n-3}$}$+w_2^3w_3^{n-3}$ $+w_3^{n-1}$ & \fbox{$w_1w_2^2w_3^{n-2}$} & \fbox{$w_2^2w_3^{n-1}$} & & \\ \midrule
$3$ & \fbox{$w_1^3w_2^3w_3^{n-5}$}$+w_1^2w_2^2w_3^{n-4}$ $+w_1w_2w_3^{n-3}+w_3^{n-2}$ & \fbox{$w_1^2w_2^3w_3^{n-4}$}$+w_1w_2^2w_3^{n-3}$ $+w_2^4w_3^{n-4}+w_2w_3^{n-2}$ & \fbox{$w_1w_2^3w_3^{n-3}$}$+w_2^2w_3^{n-2}$ & \fbox{$w_2^3w_3^{n-2}$}$+w_3^n$ & & & \\ \midrule
$4$ & \fbox{$w_1^2w_2^4w_3^{n-5}$}$+w_2^5w_3^{n-5}$ & \fbox{$w_1w_2^4w_3^{n-4}$}$+w_3^{n-1}$& \fbox{$w_2^4w_3^{n-3}$}$+w_2w_3^{n-1}$ & & & & \\ \midrule
$5$ & \fbox{$w_1w_2^5w_3^{n-5}$}$+w_2^4w_3^{n-4}$ & \fbox{$w_2^5w_3^{n-4}$}$+w_1w_3^{n-1}$ & & & & &  \\ \midrule
$6$ & \fbox{$w_2^6w_3^{n-5}$}$+w_3^{n-1}$  & & & & & & \\ \bottomrule
\end{tabular}}
\end{sidewaystable}

In addition to the elements in the table, we write out a few more which will appear in our calculations and which can be obtained from the table by multiple applications of formulas (\ref{fff1})-(\ref{fff3}):
\[
\begin{array}{rclr}
g_{8,n-9} & = & w_{1}^{2}w_{2}^{8}w_{3}^{n-9}+w_{2}^{9}w_{3}^{n-9}+w_{2}^{3}w_{3}^{n-5}+w_{3}^{n-3} & (n\geq 9); \\
g_{10,n-9} & = & w_{2}^{10}w_{3}^{n-9}+w_{1}^{2}w_{2}^{3}w_{3}^{n-5}+w_{1}^{2}w_{3}^{n-3}+w_{2}w_{3}^{n-3} & (n\geq 9); \\
g_{10,n-10} & = & w_{1}w_{2}^{10}w_{3}^{n-10}+w_{1}^{3}w_{3}^{n-4}+w_{1}^{2}w_{2}^{2}w_{3}^{n-5}+w_{3}^{n-3} & (n\geq 10); \\
g_{12,n-12} & = & w_{1}w_{2}^{12}w_{3}^{n-12}+w_{1}^{4}w_{3}^{n-5}+w_{2}^{8}w_{3}^{n-9}+w_{1}w_{3}^{n-4} & (n\geq 12).
\end{array}
\]

\section{Immersions}
\label{immer}

In order to construct the immersions of Grassmannians $G_{3,n}$ into Euclidean spaces, we recall the theorem of Hirsch (\cite{Hirsch}) which states that a smooth compact $m$-manifold $M^{m}$ immerses in $\mathbb{R}^{m+l}$ if and only if the classifying map $f_{\nu}:M^{m}\rightarrow BO$ of the stable normal bundle $\nu$ of $M^{m}$ lifts up to $BO(l)$.
\[\bfig
\morphism<600,0>[M^{m}`BO;f_{\nu}]
\morphism(600,500)|r|<0,-500>[BO(l)`BO;p]
\morphism/-->/<600,500>[M^{m}`BO(l);]
\efig\]

Let $\mathrm{imm}(M^{m})$ denotes the least integer $d$ such that $M^{m}$ immerses into $\mathbb{R}^{d}$. By Hirsch's theorem, if $w_{k}(\nu )\neq 0$ then $\mathrm{imm}(M^{m})\geq m+k$.

\medskip

As in  Corollary \ref{c2}, let $w_{i}$ be the $i$-th Stiefel-Whitney class of the canonical vector bundle $\gamma_{3}$ over $G_{3,n}$ ($n\geq 3$) and let $r$ be the (unique) integer such that $2^{r+1}<3n<2^{r+2}$, i.e., $\frac{2}{3}\cdot 2^{r}<n<\frac{4}{3}\cdot 2^{r}$. It is well known (see \cite[p.\,183]{Oproiu}) that for the stable normal bundle $\nu$ of $G_{3,n}$ one has:
\begin{equation}\label{o1} w(\nu )=(1+w_{1}^{4}+w_{2}^{2}+w_{1}^{2}w_{2}^{2}+w_{3}^{2})(1+w_{1}+w_{2}+w_{3})^{2^{r+1}-n-3}.
\end{equation}
For $n\leq 2^{r}-3$, by the result of Stong (\cite{Stong}) $\mathrm{ht}(w_{1})=2^{r}-1$ and by the result of Dutta and Khare (\cite{Dutta}) $\mathrm{ht}(w_{2})\leq 2^{r}-1$. Also, $w_{3}^{2^{r}}=0$ since $3\cdot 2^{r}>3\cdot (2^{r}-3)\geq 3n=\mathrm{dim}(G_{3,n})$ and we have that $(1+w_{1}+w_{2}+w_{3})^{2^{r}}=1$. This means that in this case ($\frac{2}{3}\cdot 2^{r}<n\leq 2^{r}-3$) formula (\ref{o1}) simplifies to
\begin{equation}\label{o2} w(\nu )=(1+w_{1}^{4}+w_{2}^{2}+w_{1}^{2}w_{2}^{2}+w_{3}^{2})(1+w_{1}+w_{2}+w_{3})^{2^{r}-n-3}.
\end{equation}

\begin{theorem}[Oproiu \cite{Oproiu}]\label{t10} For the immersion dimension of $G_{3,n}$ we have:
\begin{enumerate}
\item[(a)] If $2^{r}\leq n<\frac{4}{3}\cdot 2^{r}$, then $\mathrm{imm}(G_{3,n})\geq 6\cdot 2^{r}-3$.
\item[(b)] If $\frac{2}{3}\cdot 2^{r}<n\leq 2^{r}-3$, then $\mathrm{imm}(G_{3,n})\geq 3\cdot 2^{r}-3$.
\end{enumerate}
\end{theorem}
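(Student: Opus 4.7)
The plan is to apply Hirsch's immersion criterion just recalled: it suffices to exhibit, for the values $k_{a}:=6\cdot 2^{r}-3-3n$ (case (a)) and $k_{b}:=3\cdot 2^{r}-3-3n$ (case (b)), a nonzero Stiefel--Whitney class of the stable normal bundle, since $\dim G_{3,n}=3n$ and $\mathrm{imm}(G_{3,n})\geq 3n+k$ whenever $w_{k}(\nu)\neq 0$.

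The key observation is that these target degrees coincide exactly with the maximum possible cohomological degree of the right-hand side of formulas (\ref{o1}) and (\ref{o2}). The first factor $1+w_{1}^{4}+w_{2}^{2}+w_{1}^{2}w_{2}^{2}+w_{3}^{2}$ has top (degree $6$) part $w_{1}^{2}w_{2}^{2}+w_{3}^{2}$, and in $(1+w_{1}+w_{2}+w_{3})^{t}$ the top degree is $3t$ with the sole monomial $w_{3}^{t}$: indeed $a+b+c\leq t$ together with $a+2b+3c=3t$ force $a=b=0$, $c=t$. Setting $t=2^{r+1}-n-3$ in case (a) and $t=2^{r}-n-3$ in case (b), I therefore read off
\[w_{k}(\nu)=(w_{1}^{2}w_{2}^{2}+w_{3}^{2})\cdot w_{3}^{t}=w_{1}^{2}w_{2}^{2}w_{3}^{t}+w_{3}^{t+2}.\]

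It remains to show this class is nonzero modulo $I_{3,n}$, which is where Corollary \ref{c2} and the Gr\"obner basis enter: a monomial $w_{1}^{a}w_{2}^{b}w_{3}^{c}$ gives a nonzero basis element of $H^{*}(G_{3,n};\mathbb{Z}_{2})$ precisely when $a+b+c\leq n$, so it suffices to compare exponent sums with $n$. A short arithmetic check (using $n>\tfrac{2}{3}\cdot 2^{r}$ in case (b), respectively $n\geq 2^{r}+1$ in case (a)) shows that both $w_{1}^{2}w_{2}^{2}w_{3}^{t}$ and $w_{3}^{t+2}$ satisfy $a+b+c\leq n$; being distinct basis elements, their sum is nonzero.

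The only delicate case, and the main obstacle, is the boundary $n=2^{r}$ of part (a). There $t=n-3$, and $w_{1}^{2}w_{2}^{2}w_{3}^{n-3}$ has exponent-sum $n+1$, coinciding with the leading term of the Gr\"obner basis element $g_{2,n-3}=w_{1}^{2}w_{2}^{2}w_{3}^{n-3}+w_{2}^{3}w_{3}^{n-3}+w_{3}^{n-1}$ read off from Table \ref{tab1}. Consequently $w_{k}(\nu)=g_{2,n-3}+w_{2}^{3}w_{3}^{n-3}\equiv w_{2}^{3}w_{3}^{n-3}\imod{I_{3,n}}$, which is a nonzero basis element of exponent-sum $n$. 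The Gr\"obner basis thus reduces a potentially awkward mod-$2$ calculation to a single explicit rewrite.
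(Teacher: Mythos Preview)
Your proof is correct and follows essentially the same route as the paper: you identify the top-degree class of $w(\nu)$ from formulas (\ref{o1}) and (\ref{o2}) as $w_{1}^{2}w_{2}^{2}w_{3}^{t}+w_{3}^{t+2}$, verify via Corollary~\ref{c2} that both monomials are distinct basis elements except at the boundary $n=2^{r}$, and there use the Gr\"obner basis element $g_{2,n-3}$ from Table~\ref{tab1} to reduce $w_{k}(\nu)$ to the nonzero basis element $w_{2}^{3}w_{3}^{n-3}$. The only addition is your explicit justification that $w_{3}^{t}$ is the sole top-degree monomial in $(1+w_{1}+w_{2}+w_{3})^{t}$, which the paper leaves implicit.
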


\begin{proof} (a) By formula (\ref{o1}) above, the top degree class that appears in $w(\nu )$ is $w_{6\cdot 2^{r}-3n-3}(\nu )=w_{1}^{2}w_{2}^{2}w_{3}^{2^{r+1}-n-3}+w_{3}^{2^{r+1}-n-1}$. Now, $2^{r+1}-n-1\leq 2^{r}-1<n$, so $w_{3}^{2^{r+1}-n-1}$ is an element of the base from Corollary \ref{c2}. Also, $2+2+2^{r+1}-n-3=2^{r+1}-n+1\leq 2^{r}+1$. Hence, if $n\geq 2^{r}+1$ we have that $w_{6\cdot 2^{r}-3n-3}(\nu )$ is a sum of two distinct basis elements and we conclude that $w_{6\cdot 2^{r}-3n-3}(\nu )\neq 0$.

For $n=2^{r}$, we consider the element $g_{2,2^{r}-3} =w_{1}^{2}w_{2}^{2}w_{3}^{2^{r}-3}+w_{2}^{3}w_{3}^{2^{r}-3}+w_{3}^{2^r-1}$ of the Gr\"obner basis $G$ from Theorem \ref{t3} (see Table \ref{tab1}).
This implies that
\[w_{6\cdot 2^{r}-3n-3}(\nu )=w_{3\cdot 2^{r}-3}(\nu )=w_{1}^{2}w_{2}^{2}w_{3}^{2^{r}-3}+w_{3}^{2^{r}-1}
=g_{2,2^{r}-3}+w_{2}^{3}w_{3}^{2^{r}-3}=w_{2}^{3}w_{3}^{2^{r}-3},\]
which is nonzero in cohomology by Corollary \ref{c2} (the remainder of the division of $w_{6\cdot 2^{r}-3n-3}(\nu )$ by $G$ is nontrivial). Therefore, we have that
\[\mathrm{imm}(G_{3,n})\geq \mathrm{dim}(G_{3,n})+6\cdot 2^{r}-3n-3=3n+6\cdot 2^{r}-3n-3=6\cdot 2^{r}-3.\]

(b) Using formula (\ref{o2}), we obtain that, in this case, the top class in $w(\nu )$ is $w_{3\cdot 2^{r}-3n-3}(\nu )=w_{1}^{2}w_{2}^{2}w_{3}^{2^{r}-n-3}+w_{3}^{2^{r}-n-1}$. The sums of the exponents in these two monomials are $\leq 2^{r}-n+1<2^{r}-\frac{2}{3}\cdot 2^{r}+1=\frac{1}{3}\cdot 2^{r}+1<\frac{n}{2}+1<n$. This means that $w_{3\cdot 2^{r}-3n-3}(\nu )$ is a sum of two distinct basis elements (from Corollary \ref{c2}). Hence, $w_{3\cdot 2^{r}-3n-3}(\nu )\neq 0$ and we conclude that
\[\mathrm{imm}(G_{3,n})\geq 3n+3\cdot 2^{r}-3n-3=3\cdot 2^{r}-3\]
completing the proof of the theorem.
\end{proof}

\begin{remark} In \cite{Oproiu}, Oproiu has also proved that $\mathrm{imm}(G_{3,2^{r}-2})\geq 4\cdot 2^{r}-3$, $\mathrm{imm}(G_{3,2^{r}-1})\geq 5\cdot 2^{r}-3$ ($r\geq 3$) and $\mathrm{imm}(G_{3,3})\geq 15$. The fact $\mathrm{imm}(G_{3,3})\geq 15$ is easily obtained by our method. Likewise, using the Gr\"obner basis $G$, one can verify that $\mathrm{imm}(G_{3,2^{r}-2})\geq 4\cdot 2^{r}-3$, but since the proof requires a lot of calculation, we have decided to omit it.
\end{remark}

In order to shorten the upcoming calculations, we give two equalities concerning the action of the Steenrod algebra $\mathcal{A}_{2}$ on $H^{*}(G_{3,n};\mathbb{Z}_{2})$ which can be obtained using the basic properties of $\mathcal{A}_{2}$ and formulas of Wu and Cartan. It is understood that $a$, $b$ and $c$ are nonnegative integers.

\begin{equation}\label{fsa1} Sq^{1}(w_{1}^{a}w_{2}^{b}w_{3}^{c})=(a+b+c)w_{1}^{a+1}w_{2}^{b}w_{3}^{c}+bw_{1}^{a}w_{2}^{b-1}w_{3}^{c+1},
\end{equation}
\begin{equation}\label{fsa2} Sq^{2}(w_{1}^{a}w_{2}^{b}w_{3}^{c})={a+b+c\choose 2}w_{1}^{a+2}w_{2}^{b}w_{3}^{c}+b(a+c)w_{1}^{a+1}w_{2}^{b-1}w_{3}^{c+1}
\end{equation}
\[+(b+c)w_{1}^{a}w_{2}^{b+1}w_{3}^{c}+{b\choose 2}w_{1}^{a}w_{2}^{b-2}w_{3}^{c+2}.\]

Now, we turn to the proof of Theorem \ref{theorem2}.

\begin{lemma}\label{l11} Let $n\equiv 0\imod 4$. If $\nu$ is the stable normal bundle of $G_{3,n}$, then
\begin{enumerate}
\item[$\mathrm{(a)}$] $w_{i}(\nu )=0$ for $i\geq 3n-2$;
\item[$\mathrm{(b)}$] $w_{2}(\nu )=w_{2}$.
\end{enumerate}
\end{lemma}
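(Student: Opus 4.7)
The plan is to invoke Oproiu's identity (\ref{o1}): write $w(\nu)=\alpha\beta$, where $\alpha=1+w_1^4+w_2^2+w_1^2w_2^2+w_3^2$ and $\beta=(1+w_1+w_2+w_3)^{N}$ with $N=2^{r+1}-n-3$. For $n\equiv 0\imod 4$ (and $n\geq 4$ forces $r\geq 2$), we have $2^{r+1}\equiv 0\imod 8$, so $N\equiv 1\imod 4$. Writing $N=4M+1$ and using the freshman's dream in characteristic $2$ gives
\[\beta=(1+w_1+w_2+w_3)\,(1+w_1^{4}+w_2^{4}+w_3^{4})^{M}.\]

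For (b), I observe that $\alpha$ has nonzero components only in degrees $0,4,6$. Consequently $w_2(\nu)$ equals the degree-$2$ part of $\beta$, which by multinomial expansion is $\binom{N}{2}w_1^2+Nw_2$. Since $N\equiv 1\imod 4$, Lucas' theorem gives $\binom{N}{2}\equiv 0$ and $N\equiv 1\imod 2$; thus $w_2(\nu)=w_2$.

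For (a), the key preparatory step is to use Corollary~\ref{c2} to enumerate the basis monomials in the three top degrees: only $w_3^{n}$ in degree $3n$, only $w_2w_3^{n-1}$ in degree $3n-1$, and only $\{w_2^{2}w_3^{n-2},w_1w_3^{n-1}\}$ in degree $3n-2$. The Gr\"obner basis entries $g_{0,n+1}$, $g_{0,n}$, $g_{1,n}$, and $g_{0,n-1}$ from Table~\ref{tab1} give at once the reductions
\[w_3^{n+1}=w_1w_3^{n}=w_2w_3^{n}=0\qquad\text{and}\qquad w_1^{2}w_3^{n-1}=w_2w_3^{n-1}\]
in $H^{*}(G_{3,n};\mathbb{Z}_2)$. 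Thus any monomial of the form $w_1^{i}w_2^{j}w_3^{n}$ with $i+j\geq 1$, or $w_3^{n+k}$ with $k\geq 1$, is killed instantly. I will then expand $w(\nu)=P\cdot Q$ with $P=\alpha(1+w_1+w_2+w_3)$ (of total degree $\leq 9$) and $Q=(1+w_1^{4}+w_2^{4}+w_3^{4})^{M}$ (a polynomial whose monomials have degrees divisible by $4$), list the monomials of degrees $3n-2,3n-1,3n$, apply the reductions above, and show the resulting coefficients vanish modulo $2$.

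The hard part will be the combinatorial bookkeeping for (a): several monomials in $P\cdot Q$ a priori contribute to each top class, and the cancellations rest delicately on the binary expansion $N=1+4M$ via Lucas' theorem for multinomial coefficients. I expect to need a few further entries from the $l=n-1,n-2$ columns of Table~\ref{tab1} (such as $g_{2,n-1}=w_2^{2}w_3^{n-1}$ and $g_{3,n-2}=w_2^{3}w_3^{n-2}+w_3^{n}$) to rewrite intermediate monomials, and ultimately to verify that the coefficient of the single surviving basis element (or pair of basis elements) in each of $w_{3n}(\nu)$, $w_{3n-1}(\nu)$, $w_{3n-2}(\nu)$ is even, thereby establishing (a).
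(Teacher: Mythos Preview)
Your treatment of (b) is correct and essentially identical to the paper's: from (\ref{o1}) the degree-$2$ component of $w(\nu)$ is $\binom{N}{2}w_1^2+Nw_2$ with $N=2^{r+1}-n-3\equiv 1\imod 4$, so $w_2(\nu)=w_2$.

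For (a), however, you are working far harder than necessary, and the hard part you anticipate is never actually carried out. The paper's proof is a pure degree count, with no Gr\"obner reduction whatsoever. The polynomial on the right of (\ref{o1}) has top degree $6+3(2^{r+1}-n-3)=6\cdot 2^r-3n-3$. If $n\geq 2^r$ this is already $\leq 3n-3$, so there are no terms of degree $\geq 3n-2$ even before passing to cohomology. If $n<2^r$, the hypothesis $n\equiv 0\imod 4$ forces $n\leq 2^r-4$, so formula (\ref{o2}) applies; there the top degree is $3\cdot 2^r-3n-3$, and from $2^{r+1}<3n$ one gets $3\cdot 2^r<6n$, hence again the top degree is $<3n-3$. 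In either case $w_i(\nu)=0$ for $i\geq 3n-2$ holds already at the polynomial level.

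Your plan of expanding $P\cdot Q$, enumerating basis monomials in the top three degrees, and reducing via Table~\ref{tab1} is not wrong in principle, but in the regime $n<2^r$ the expression (\ref{o1}) can have top degree far above $3n$ (for instance $n=12$, $r=4$ gives top degree $57$ versus $3n=36$), so the ``combinatorial bookkeeping'' would be substantial and you have given no mechanism to control it. The idea you are missing is precisely the passage from (\ref{o1}) to (\ref{o2}) via $(1+w_1+w_2+w_3)^{2^r}=1$ in $H^*(G_{3,n};\mathbb{Z}_2)$, which collapses the degree bound and renders (a) immediate without touching the Gr\"obner basis at all.
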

\begin{proof} As above, let $r$ be the integer such that $\frac{2}{3}\cdot 2^{r}<n<\frac{4}{3}\cdot 2^{r}$.

If $n\geq 2^{r}$, then $6\cdot 2^{r}\leq 6n$ implying $6\cdot 2^{r}-3n-3\leq 3n-3$. As we have already noticed in the proof of the previous theorem, the top class in the expression (\ref{o1}) is of degree $6\cdot 2^{r}-3n-3$ and the previous inequality proves (a).

If $n<2^{r}$, then $n$ actually must be $<2^{r}-3$ (since $n\equiv 0\imod 4$) and by looking at formula (\ref{o2})
we see that the top class there is in degree $3\cdot 2^{r}-3n-3$ and, since $n>\frac{2}{3}\cdot 2^{r}$, we have that $3\cdot 2^{r}<4\cdot 2^{r}<6n$ implying $3\cdot 2^{r}-3n-3<3n-3$. This proves (a).

\medskip

Using the fact that $2^{r+1}-n-3\equiv 1\imod 4$ (since $n\equiv 0\imod 4$), from formula (\ref{o1}) we directly read off:
\[w_{2}(\nu )={2^{r+1}-n-3\choose 2}w_{1}^{2}+(2^{r+1}-n-3)w_{2}=w_{2},\]
obtaining (b).
\end{proof}

\noindent{\bf Proof of Theorem \ref{theorem2}.} Let $f_{\nu}:G_{3,n}\rightarrow BO$ be the classifying map for the stable normal bundle $\nu$ of $G_{3,n}$. In order to show that $f_{\nu}$ can be lifted up to $BO(3n-3)$, we use the $3n$-MPT for the fibration
$p:BO(3n-3)\rightarrow BO$ which can be constructed by the method of Gitler and Mahowald (\cite{Gitler}) using the result of Nussbaum (\cite{Nussbaum}) who has proved that their method is applicable to the fibrations $p:BO(l)\rightarrow BO$ when $l$ is odd. The tower is presented in the following diagram ($K_{m}$ stands for the Eilenberg-MacLane space $K(\mathbb{Z}_{2},m)$).
\[\bfig
 \morphism<900,0>[G_{3,n}`BO;f_{\nu}]
 \morphism(900,0)<1100,0>[BO`K_{3n-2}\times K_{3n};w_{3n-2}\times w_{3n}]
 \morphism(900,500)|r|<0,-500>[E_{1}`BO;q_{1}]
 \morphism(900,500)<1100,0>[E_{1}`K_{3n-1}\times K_{3n};k_{1}^{2}\times k_{2}^{2}]
 \morphism(900,1000)|r|<0,-500>[E_{2}`E_{1};q_{2}]
 \morphism(900,1000)<1100,0>[E_{2}`K_{3n};k_{1}^{3}]
 \morphism(900,1500)|r|<0,-500>[E_{3}`E_{2};q_{3}]
 \morphism/-->/<900,500>[G_{3,n}`E_{1};g]
 \morphism/-->/<900,1000>[G_{3,n}`E_{2};h]
 \efig\]
The relations that produce $k$-invariants are given in the following table.
\begin{table}[ht]
\label{eqtable}
\renewcommand\arraystretch{1.5}
\noindent\[
\begin{array}{|l|}
\hline
k_{1}^{2}: \quad (Sq^{2}+w_{2})w_{3n-2}=0\\
\hline
k_{2}^{2}: \quad (Sq^{2}+w_{1}^{2}+w_{2})Sq^{1}w_{3n-2}+Sq^{1}w_{3n}=0\\
\hline
k_{1}^{3}: \quad (Sq^{2}+w_{2})k_{1}^{2}+Sq^{1}k_{2}^{2}=0\\
\hline
\end{array}
\]
\end{table}

This is $3n$-MPT and since $\mathrm{dim}(G_{3,n})=3n$, it suffices to lift $f_{\nu}$ up to $E_{3}$.

\medskip

By Lemma \ref{l11} (a), $f_{\nu}^{*}(w_{3n-2})=w_{3n-2}(\nu )=0$, $f_{\nu}^{*}(w_{3n})=w_{3n}(\nu )=0$, so $f_{\nu}$ can be lifted up to $E_{1}$.

\medskip

Now, we show that we can choose a lifting $g:G_{3,n}\rightarrow E_{1}$ of $f_{\nu}$ which lifts up to $E_{2}$. We use Lemma \ref{l11} (b), formula (\ref{fsa2}) and Gr\"obner basis $G$ from Theorem \ref{t3} to calculate:
\[(Sq^{2}+w_{2}(\nu ))(w_{3}^{n-1})=Sq^{2}w_{3}^{n-1}+w_{2}w_{3}^{n-1}={n-1\choose 2}w_{1}^{2}w_{3}^{n-1}+(n-1)w_{2}w_{3}^{n-1}\]
\[+w_{2}w_{3}^{n-1}=w_{1}^{2}w_{3}^{n-1}=g_{0,n-1}+w_{2}w_{3}^{n-1}=w_{2}w_{3}^{n-1}.\]
By Corollary \ref{c2}, $w_{2}w_{3}^{n-1}\neq 0$ in $H^{3n-1}(G_{3,n};\mathbb{Z}_{2})\cong \mathbb{Z}_{2}$. Hence, the indeterminacy of $k_{1}^{2}$ is all of $H^{3n-1}(G_{3,n};\mathbb{Z}_{2})$, so we can choose $g$ such that $g^{*}(k_{1}^{2})=0$. Also,
\[Sq^{1}(w_{2}w_{3}^{n-1})=nw_{1}w_{2}w_{3}^{n-1}+w_{3}^{n}=w_{3}^{n}\neq0,\]
by (\ref{fsa1}) and Corollary \ref{c2}. By looking at the relation that produces $k_{2}^{2}$, we see that we can vary $g$ (by choosing appropriate class in $H^{3n-1}(G_{3,n};\mathbb{Z}_{2})$) such that $g^{*}(k_{1}^{2})=g^{*}(k_{2}^{2})=0$. This means that $g$ (and then also $f_{\nu}$) lifts up to $E_{2}$.

\medskip

Finally, since $Sq^{1}(w_{2}w_{3}^{n-1})=w_{3}^{n}\neq 0$, we conclude that the indeterminacy of $k_{1}^{3}$ is all of $H^{3n}(G_{3,n};\mathbb{Z}_{2})\cong \mathbb{Z}_{2}$, so the appropriate lifting $h:G_{3,n}\rightarrow E_{2}$ of $f_{\nu}$ lifts up to $E_{3}$ and the proof of the theorem is completed.  \hfill $\Box$

\medskip

In order to prove Theorem \ref{theorem3}, we focus on the case $n\equiv 6\imod 8$. We need the following lemma.

\begin{lemma}\label{l21} Let $n\equiv 6\imod 8$. If $\nu$ is the stable normal bundle of $G_{3,n}$,
\begin{enumerate}
\item[$\mathrm{(a)}$] $w_{3n-4}(\nu )=0$;
\item[$\mathrm{(b)}$] $w_{3n-2}(\nu )=0$;
\item[$\mathrm{(c)}$] $w_{2}(\nu )=w_{1}^{2}+w_{2}$.
\end{enumerate}
\end{lemma}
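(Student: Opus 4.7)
My plan is to establish the three parts by direct computation from formula (\ref{o1}), reducing modulo the Gr\"obner basis $G$ of Theorem \ref{t3} only when necessary.

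\textbf{Part (c).} Since the first factor in (\ref{o1}) contributes nothing in degree $2$, $w_2(\nu)$ equals the degree-$2$ part of $(1+w_1+w_2+w_3)^N$ (with $N = 2^{r+1}-n-3$), namely $\binom{N}{2}w_1^2 + N w_2$. From $n \geq 6$ and $\tfrac{2}{3}\cdot 2^r < n$ we get $r \geq 3$, so $2^{r+1}\equiv 0 \pmod 8$; together with $n \equiv 6 \pmod 8$ this forces $N \equiv 7 \pmod 8$. Thus $N$ is odd, and a direct check shows $\binom{N}{2}$ is odd whenever $N \equiv 3 \pmod 4$, so $w_2(\nu) = w_1^2 + w_2$.

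\textbf{Parts (a) and (b).} I would split according to the position of $n$ relative to $2^r$. If $n \leq 2^r - 3$, then formula (\ref{o2}) applies and the top nonzero degree of $w(\nu)$ is $3\cdot 2^r - 3n - 3$; since $n > \tfrac{2}{3}\cdot 2^r$, this is less than $3n - 4$ (for $r \geq 1$), so $w_i(\nu) = 0$ for all $i \geq 3n - 4$ automatically. If $n > 2^r$, then $n \geq 2^r + 6$ (as $n \equiv 6 \pmod 8$ and $2^r \equiv 0 \pmod 8$), so the top degree in (\ref{o1}) is $6\cdot 2^r - 3n - 3 \leq 3\cdot 2^r - 21 < 3n - 4$, and again $w_{3n-4}(\nu) = w_{3n-2}(\nu) = 0$.

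The remaining case is $n = 2^r - 2$ (with $r \geq 3$). Here $N = 2^r - 1$, so over $\mathbb{Z}_2$ formula (\ref{o1}) becomes
\[
w(\nu) = (1 + w_1^4 + w_2^2 + w_1^2w_2^2 + w_3^2)\prod_{s=0}^{r-1}(1 + w_1^{2^s} + w_2^{2^s} + w_3^{2^s}).
\]
I would expand this, extract the degree-$(3n-4)$ and degree-$(3n-2)$ components, and reduce them modulo $G$. By Corollary \ref{c2}, $H^{3n-4}(G_{3,n};\mathbb{Z}_2)$ is spanned by $\{w_1^2w_3^{n-2},\, w_1w_2^2w_3^{n-3},\, w_2^4w_3^{n-4}\}$ and $H^{3n-2}(G_{3,n};\mathbb{Z}_2)$ by $\{w_1w_3^{n-1},\, w_2^2w_3^{n-2}\}$, so vanishing reduces to checking that each basis coefficient dies after reduction using Table \ref{tab1} and the explicit $g_{m,l}$ listed after it.

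\textbf{Main obstacle.} The case $n = 2^r - 2$ is the substantive one: the parameter $r$ enters the exponents of the Gr\"obner basis elements, and the product expansion involves $r$ factors. Lucas's theorem keeps the multinomial coefficients tractable (the three low bits of $N$ are all $1$, and higher bits contribute only terms whose exponents are divisible by $2^s$ with $s \geq 3$), but one must verify that the reduction via (\ref{fff1})--(\ref{fff3}) yields zero uniformly in $r$.
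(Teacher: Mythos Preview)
Your case split and the handling of part (c) and of the noncritical cases in (a)--(b) match the paper exactly. One small slip: your basis for $H^{3n-4}(G_{3,n};\mathbb{Z}_2)$ is missing $w_2w_3^{n-2}$ (there are four basis monomials, not three).

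For the critical case $n=2^r-2$, the paper takes a different and sharper route than your product expansion $\prod_{s=0}^{r-1}(1+w_1^{2^s}+w_2^{2^s}+w_3^{2^s})$. It factors the first factor of (\ref{o1}) as $(1+w_1)^2(1+w_1+w_2)^2+w_3^2$, writes $(1+w_1+w_2+w_3)^{2^r-1}=\sum_{j=0}^{2^r-1}(1+w_1+w_2)^j w_3^{2^r-1-j}$, and telescopes to obtain the closed form
\[
w(\nu)=(1+w_1^{n+2})(1+w_1+w_2+w_3)+\sum_{j=3}^{n+3}\sum_{a+b\le j}\tbinom{j}{a+b}\tbinom{a+b}{a}w_1^{a+2}w_2^{b}w_3^{\,n+3-j},
\]
using $(1+w_1+w_2)^{2^r}=1+w_1^{2^r}$ (Stong/Dutta--Khare). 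From this one reads off $w_{3n-4}(\nu)$ and $w_{3n-2}(\nu)$ as explicit sums of at most eight monomials, \emph{independent of $r$}, which are then killed by the handful of Gr\"obner elements $g_{0,n-4},g_{1,n-4},g_{3,n-5},g_{4,n-5},g_{5,n-5},g_{2,n-3},g_{3,n-3},g_{4,n-4}$ together with $g_{8,n-9},g_{10,n-9},g_{10,n-10},g_{12,n-12}$ listed after Table~\ref{tab1}. Your Lucas-based expansion would in principle also work, but the number of factors grows with $r$ and the ``uniform in $r$'' reduction you flag as the main obstacle is exactly what the paper's telescoping avoids.
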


%\noindent{\bf Proof.}
\begin{proof} Let $r\geq 3$ be the integer such that $2^{r+1}<3n<2^{r+2}$. If $n\geq 2^{r}$, then $n\geq 2^{r}+6$ (since $n\equiv 6\imod 8$) and so $2^{r+1}\leq 2n-12$. The top class in the expression (\ref{o1}) is in degree $6+3\cdot (2^{r+1}-n-3)\leq 6+3\cdot (n-15)=3n-39$ and obviously, we have (a) and (b).

If $n<2^{r}$ and $n\neq 2^{r}-2$, then $n$ must be $<2^{r}-2$, so formula (\ref{o2}) holds. The top class there is in degree $6+3\cdot (2^{r}-n-3)$ and, since $3n>2^{r+1}$, we have that $2^{r}<\frac{3}{2}\cdot n$, implying $6+3\cdot (2^{r}-n-3)<6+3\cdot \frac{n-6}{2}\leq 3n-12$ and again, $w_{3n-4}(\nu )$ and $w_{3n-2}(\nu )$ must be trivial.

To prove (a) and (b) it remains to consider the case $n=2^{r}-2$. From formula (\ref{o1}), we have
\[w(\nu )=((1+w_{1}^{2}+w_{2}+w_{1}w_{2})^{2}+w_{3}^{2})(1+w_{1}+w_{2}+w_{3})^{2^{r}-1}\]
\[=((1+w_{1})^{2}(1+w_{1}+w_{2})^{2}+w_{3}^{2})\sum_{j=0}^{2^{r}-1}(1+w_{1}+w_{2})^{j}w_{3}^{2^{r}-1-j}\]
\[=(1+w_{1}^{2})\sum_{j=1}^{n+1}(1+w_{1}+w_{2})^{j+2}w_{3}^{n+1-j}+\sum_{j=1}^{n+1}(1+w_{1}+w_{2})^{j}w_{3}^{n+3-j}\]
\[=(1+w_{1}^{2})\sum_{j=3}^{n+3}(1+w_{1}+w_{2})^{j}w_{3}^{n+3-j}+\sum_{j=3}^{n+1}(1+w_{1}+w_{2})^{j}w_{3}^{n+3-j}\]
\[=(1+w_{1}+w_{2})^{n+2}(1+w_{1}+w_{2}+w_{3})+\sum_{j=3}^{n+3}w_{1}^{2}(1+w_{1}+w_{2})^{j}w_{3}^{n+3-j}\]
\[=(1\!+\!w_{1}\!+\!w_{2})^{n+2}(1\!+\!w_{1}\!+\!w_{2}\!+\!w_{3})
\!+\!\sum_{j=3}^{n+3}\!\sum_{a+b\leq j}\!\bigl(\begin{smallmatrix} j\\ a+b\end{smallmatrix}\bigr) \bigl(\begin{smallmatrix} a+b\\ a\end{smallmatrix}\bigr) w_{1}^{a+2}w_{2}^{b}w_{3}^{n+3-j}.\]
Dutta and Khare (\cite{Dutta}) proved that, in this case, $\mathrm{ht}(w_{2})=2^{r}-1$. Thus,
\[(1+w_{1}+w_{2})^{n+2}=(1+w_{1}+w_{2})^{2^{r}}=1+w_{1}^{2^{r}}+w_{2}^{2^{r}}=1+w_{1}^{2^{r}}=1+w_{1}^{n+2}.\]
Finally, we obtain the following expression for $w(\nu )$:
\[w(\nu )=(1\!+\!w_{1}^{n+2})(1\!+\!w_{1}\!+\!w_{2}\!+\!w_{3})
+\sum_{j=3}^{n+3}\sum_{a+b\leq j}\bigl(\begin{smallmatrix} j\\ a+b\end{smallmatrix}\bigr) \bigl(\begin{smallmatrix} a+b\\ a\end{smallmatrix}\bigr) w_{1}^{a+2}w_{2}^{b}w_{3}^{n+3-j}.\]
Using this formula, after some tedious calculation (which we omit), one gets
\[w_{3n-4}(\nu )=w_{1}^{2}w_{3}^{n-2}+w_{1}^{8}w_{3}^{n-4}+w_{1}^{6}w_{2}w_{3}^{n-4}+w_{1}^{2}w_{2}^{3}w_{3}^{n-4}\]
\[+w_{1}^{4}w_{2}^{8}w_{3}^{n-8}+w_{1}^{2}w_{2}^{9}w_{3}^{n-8}+w_{1}^{2}w_{2}^{12}w_{3}^{n-10}+w_{1}^{2}w_{2}^{15}w_{3}^{n-12}\]
with the note that for $r=3$, i.e., $n=6$, only the first four summands appear. We are going to prove that both sum of the first four and sum of the second four summands are equal to zero. We use elements of Gr\"obner basis $G$ from Theorem \ref{t3}.
\[w_{1}^{2}w_{3}^{n-2}+w_{1}^{8}w_{3}^{n-4}+w_{1}^{6}w_{2}w_{3}^{n-4}+w_{1}^{2}w_{2}^{3}w_{3}^{n-4}=w_{1}^{2}w_{3}^{n-2}\]
\[+w_{1}^{3}(g_{0,n-4}+w_{1}^{2}w_{3}^{n-3}+w_{1}w_{2}^{2}w_{3}^{n-4})+w_{1}^{2}(g_{1,n-4}+w_{1}^{3}w_{3}^{n-3}
+w_{1}^{2}w_{2}^{2}w_{3}^{n-4}+w_{3}^{n-2})\]
and since $g_{m,l}=0$ in $H^{*}(G_{3,n};\mathbb{Z}_{2})$, it follows immediately that this is zero. Also,
\[w_{1}^{4}w_{2}^{8}w_{3}^{n-8}+w_{1}^{2}w_{2}^{9}w_{3}^{n-8}+w_{1}^{2}w_{2}^{12}w_{3}^{n-10}
+w_{1}^{2}w_{2}^{15}w_{3}^{n-12}\]
\[=w_{1}^{2}w_{3}(g_{8,n-9}+w_{2}^{3}w_{3}^{n-5}+w_{3}^{n-3})+w_{1}w_{2}^{2}(g_{10,n-10}+w_{1}^{3}w_{3}^{n-4}
+w_{1}^{2}w_{2}^{2}w_{3}^{n-5}+w_{3}^{n-3})\]
\[+w_{1}w_{2}^{3}(g_{12,n-12}+w_{1}^{4}w_{3}^{n-5}+w_{2}^{8}w_{3}^{n-9}+w_{1}w_{3}^{n-4})\]
\[=w_{1}^{2}w_{3}^{n-2}+w_{1}^{4}w_{2}^{2}w_{3}^{n-4}+w_{1}^{3}w_{2}^{4}w_{3}^{n-5}+w_{1}w_{2}^{2}w_{3}^{n-3}
+w_{1}^{5}w_{2}^{3}w_{3}^{n-5}+w_{1}w_{2}^{11}w_{3}^{n-9}\]
\[=w_{1}^{2}(g_{3,n-5}+w_{1}w_{2}w_{3}^{n-3})+w_{1}w_{2}(g_{10,n-9}+w_{1}^{2}w_{3}^{n-3})=0\]
and we have proved (a).

Likewise, from the upper expression for $w(\nu )$ one obtains that
\[w_{3n-2}(\nu )=w_{1}^{2}w_{2}w_{3}^{n-2}+w_{1}^{8}w_{2}w_{3}^{n-4}+w_{1}^{6}w_{2}^{2}w_{3}^{n-4}+w_{1}^{2}w_{2}^{4}w_{3}^{n-4}\]
\[+w_{1}^{4}w_{2}^{9}w_{3}^{n-8}+w_{1}^{2}w_{2}^{10}w_{3}^{n-8}+w_{1}^{2}w_{2}^{13}w_{3}^{n-10}.\]
Again, for $n=6$ we note that only first four summands appear and it is obvious that in this case $w_{3n-2}(\nu )=w_{2}w_{3n-4}(\nu )=0$ by (a). For $n>6$ (i.e., $n\geq 14$), we have that
\begin{eqnarray*}
w_{3n-2}(\nu ) & = & w_{2}(w_{3n-4}(\nu )+w_{1}^{2}w_{2}^{15}w_{3}^{n-12})=w_{1}^{2}w_{2}^{16}w_{3}^{n-12}\\
& = & w_{1}w_{2}^{4}(g_{12,n-12}+w_{1}^{4}w_{3}^{n-5}+w_{2}^{8}w_{3}^{n-9}+w_{1}w_{3}^{n-4})\\
& = & w_{1}^{5}w_{2}^{4}w_{3}^{n-5}+w_{1}w_{2}^{12}w_{3}^{n-9}+w_{1}^{2}w_{2}^{4}w_{3}^{n-4}\\
& = & w_{1}^{3}(g_{4,n-5}+w_{2}^{5}w_{3}^{n-5})
+w_{2}^{2}w_{3}(g_{10,n-10}+w_{1}^{3}w_{3}^{n-4}+w_{3}^{n-3})\\
& = & w_{1}^{3}w_{2}^{5}w_{3}^{n-5}+w_{1}^{3}w_{2}^{2}w_{3}^{n-3}+w_{2}^{2}w_{3}^{n-2}\\
& = & w_{1}^{2}(g_{5,n-5}+w_{2}^{4}w_{3}^{n-4})+w_{1}(g_{2,n-3}+w_{2}^{3}w_{3}^{n-3}+w_{3}^{n-1})+w_{2}^{2}w_{3}^{n-2}\\
& = & w_{1}^{2}w_{2}^{4}w_{3}^{n-4}
+w_{1}w_{2}^{3}w_{3}^{n-3}+w_{1}w_{3}^{n-1}+w_{2}^{2}w_{3}^{n-2}\\
& = & w_{1}g_{4,n-4}+g_{3,n-3}=0.
\end{eqnarray*} 
This proves (b).

\medskip

For (c), since $2^{r+1}-n-3\equiv 7\imod 8$, by equality (\ref{o1}) we have:
\[w_{2}(\nu )={2^{r+1}-n-3\choose 2}w_{1}^{2}+(2^{r+1}-n-3)w_{2}=w_{1}^{2}+w_{2}\]
and we are done.
\end{proof}

\begin{remark} The triviality of the classes $w_{3n-4}(\nu )$ and $w_{3n-2}(\nu )$ for the case $n=2^{r}-2$ and $r\geq 6$ is a consequence of a Massey's result (\cite[Theorem I]{Massey}) which states that $w_{i}(\nu)=0$ for $i>3n-\alpha(3n)=3n-r+1$. However, the calculation for the cases $r=3,4,5$ is not much simpler than for the arbitrary $r$ and therefore we have proved this fact for all $r\geq 3$.
\end{remark}

\begin{lemma}\label{l22} For the map $(Sq^{2}+w_{2}(\nu )):H^{3n-5}(G_{3,n};\mathbb{Z}_{2})\rightarrow H^{3n-3}(G_{3,n};\mathbb{Z}_{2})$, where $n\equiv 6\imod 8$, we have:
\begin{enumerate}
\item[] $(Sq^{2}+w_{2}(\nu ))(w_{1}w_{3}^{n-2})=w_{1}w_{2}w_{3}^{n-2}+w_{3}^{n-1}$;
\item[] $(Sq^{2}+w_{2}(\nu ))(w_{2}^{2}w_{3}^{n-3})=w_{2}^{3}w_{3}^{n-3}$.
\end{enumerate}
\end{lemma}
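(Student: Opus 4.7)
The plan is to compute each of the two values directly by combining three ingredients: formula (\ref{fsa2}) for $Sq^{2}$, the identity $w_{2}(\nu) = w_{1}^{2}+w_{2}$ from Lemma \ref{l21}(c), and the relations supplied by the Gr\"obner basis $G$ (specifically the entries $g_{0,n-2}$ and $g_{2,n-3}$ read off from Table \ref{tab1}). The computation splits naturally into (i) expanding $Sq^{2}$ on the given monomial, (ii) multiplying the monomial by $w_{1}^{2}+w_{2}$, and (iii) reducing the resulting polynomial modulo $I_{3,n}$ using the Gr\"obner basis; the answer then drops out after cancellation.

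For the first formula, I would set $a=1$, $b=0$, $c=n-2$ in (\ref{fsa2}), obtaining
\[Sq^{2}(w_{1}w_{3}^{n-2})=\tbinom{n-1}{2}w_{1}^{3}w_{3}^{n-2}+(n-2)w_{1}w_{2}w_{3}^{n-2}.\]
Since $n\equiv 6\imod 8$, both $\binom{n-1}{2}$ and $n-2$ are even, so this whole term is zero. Then $(w_{1}^{2}+w_{2})\cdot w_{1}w_{3}^{n-2}=w_{1}^{3}w_{3}^{n-2}+w_{1}w_{2}w_{3}^{n-2}$, and I would use the relation $g_{0,n-2}=w_{1}^{3}w_{3}^{n-2}+w_{3}^{n-1}=0$ in cohomology to replace $w_{1}^{3}w_{3}^{n-2}$ by $w_{3}^{n-1}$, yielding exactly $w_{1}w_{2}w_{3}^{n-2}+w_{3}^{n-1}$.

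For the second formula, I would set $a=0$, $b=2$, $c=n-3$ in (\ref{fsa2}), which produces four summands; the $\binom{n-1}{2}$ and $2(n-3)$ coefficients vanish mod $2$, leaving $(n-1)w_{2}^{3}w_{3}^{n-3}+w_{3}^{n-1}=w_{2}^{3}w_{3}^{n-3}+w_{3}^{n-1}$ (using $n-1$ odd). Multiplication by $w_{1}^{2}+w_{2}$ gives $w_{1}^{2}w_{2}^{2}w_{3}^{n-3}+w_{2}^{3}w_{3}^{n-3}$, and the basis element $g_{2,n-3}=w_{1}^{2}w_{2}^{2}w_{3}^{n-3}+w_{2}^{3}w_{3}^{n-3}+w_{3}^{n-1}$ shows this equals $w_{3}^{n-1}$ modulo $I_{3,n}$. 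Summing the two contributions, the $w_{3}^{n-1}$ terms cancel in pairs and one copy of $w_{2}^{3}w_{3}^{n-3}$ survives, matching the claim.

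There is no real obstacle here beyond bookkeeping; the only point at which one must be a little careful is the parity check $\binom{n-1}{2}\equiv 0\imod 2$ when $n\equiv 6\imod 8$ (writing $n=8k+6$ makes this transparent) and the correct identification of the relevant elements of $G$ in Table \ref{tab1}. Everything else is a direct substitution and two single-line reductions modulo $G$.
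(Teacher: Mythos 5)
Your computation is correct and follows essentially the same route as the paper's own proof: apply formula (\ref{fsa2}), substitute $w_{2}(\nu)=w_{1}^{2}+w_{2}$ from Lemma \ref{l21}(c), and reduce modulo the Gr\"obner basis using precisely the elements $g_{0,n-2}$ and $g_{2,n-3}$ from Table \ref{tab1}. The only difference is cosmetic: you reduce the $Sq^{2}$ contribution and the multiplication-by-$w_{2}(\nu)$ contribution separately and then combine, whereas the paper expands everything at once and cancels before reducing by $g_{2,n-3}$; the end result is identical.
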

\begin{proof} We use Lemma \ref{l21} (c), formula (\ref{fsa2}) and Gr\"obner basis from Theorem \ref{t3} to calculate:
\begin{eqnarray*}
(Sq^{2}+w_{2}(\nu ))(w_{1}w_{3}^{n-2}) & = & (Sq^{2}+w_{1}^{2}+w_{2})(w_{1}w_{3}^{n-2})\\
& = & \bigl(\begin{smallmatrix} n-1\\ 2\end{smallmatrix}\bigr) w_{1}^{3}w_{3}^{n-2}+(n-2)w_{1}w_{2}w_{3}^{n-2}\\
& + & w_{1}^{3}w_{3}^{n-2}+w_{1}w_{2}w_{3}^{n-2}\\
& = & w_{1}^{3}w_{3}^{n-2}+w_{1}w_{2}w_{3}^{n-2}\\
& = & g_{0,n-2}+w_{3}^{n-1}+w_{1}w_{2}w_{3}^{n-2}=w_{1}w_{2}w_{3}^{n-2}+w_{3}^{n-1};
\end{eqnarray*}

\[(Sq^{2}+w_{1}^{2}+w_{2})(w_{2}^{2}w_{3}^{n-3})=\bigl(\begin{smallmatrix} n-1\\ 2\end{smallmatrix}\bigr) w_{1}^{2}w_{2}^{2}w_{3}^{n-3}+2 (n-3)w_{1}w_{2}w_{3}^{n-2}\]
\[+(n-1)w_{2}^{3}w_{3}^{n-3}+\bigl(\begin{smallmatrix} 2\\ 2\end{smallmatrix}\bigr) w_{3}^{n-1}+w_{1}^{2}w_{2}^{2}w_{3}^{n-3}+w_{2}^{3}w_{3}^{n-3}\]
\[=w_{3}^{n-1}+w_{1}^{2}w_{2}^{2}w_{3}^{n-3}=g_{2,n-3}+w_{2}^{3}w_{3}^{n-3}=w_{2}^{3}w_{3}^{n-3}\]
and the proof is completed.
\end{proof}

\begin{lemma}\label{l23} The map $Sq^{1}:H^{3n-2}(G_{3,n};\mathbb{Z}_{2})\rightarrow H^{3n-1}(G_{3,n};\mathbb{Z}_{2})$, where $n\equiv 6\imod 8$, is trivial.
\end{lemma}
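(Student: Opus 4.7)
The plan is to compute $Sq^{1}$ directly on a basis of $H^{3n-2}(G_{3,n};\mathbb{Z}_{2})$ furnished by Corollary \ref{c2}, and show that each basis element is killed for parity reasons.

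First I would enumerate the monomial basis elements $w_{1}^{a}w_{2}^{b}w_{3}^{c}$ of $H^{3n-2}(G_{3,n};\mathbb{Z}_{2})$, i.e., the triples $(a,b,c)\in \mathbb{N}_{0}^{3}$ with $a+2b+3c=3n-2$ and $a+b+c\leq n$. Writing $s=a+b+c$, one has $b+2c=3n-2-s$, and the constraints $b+2c\leq 2s$ and $s\leq n$ force $s\in\{n-2,n-1,n\}$. A short case analysis (for $n\geq 6$, which is our range) rules out $s\in\{n-1,n-2\}$ and yields exactly two solutions with $s=n$, namely
\[
(a,b,c)=(1,0,n-1) \quad \text{and} \quad (a,b,c)=(0,2,n-2).
\]
So $\{w_{1}w_{3}^{n-1},\,w_{2}^{2}w_{3}^{n-2}\}$ is a basis of $H^{3n-2}(G_{3,n};\mathbb{Z}_{2})$.

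Next I would evaluate $Sq^{1}$ on each basis element using formula (\ref{fsa1}). For $w_{1}w_{3}^{n-1}$ we have $a+b+c=n$ and $b=0$, so
\[
Sq^{1}(w_{1}w_{3}^{n-1})=n\,w_{1}^{2}w_{3}^{n-1}+0\cdot w_{1}w_{2}^{-1}w_{3}^{n}=n\,w_{1}^{2}w_{3}^{n-1}.
\]
For $w_{2}^{2}w_{3}^{n-2}$ we have $a+b+c=n$ and $b=2$, so
\[
Sq^{1}(w_{2}^{2}w_{3}^{n-2})=n\,w_{1}w_{2}^{2}w_{3}^{n-2}+2\,w_{2}w_{3}^{n-1}=n\,w_{1}w_{2}^{2}w_{3}^{n-2} \pmod{2}.
\]
Since $n\equiv 6\imod 8$ implies $n\equiv 0\imod 2$, both coefficients vanish mod $2$ at the chain level, before any reduction modulo $I_{3,n}$ is needed. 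Hence $Sq^{1}$ vanishes on each generator, and therefore on all of $H^{3n-2}(G_{3,n};\mathbb{Z}_{2})$.

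There is no real obstacle here; the argument is essentially a parity observation once the basis of $H^{3n-2}(G_{3,n};\mathbb{Z}_{2})$ has been pinned down, which is itself a direct application of Corollary \ref{c2}. Unlike Lemmas \ref{l21} and \ref{l22}, no use of the Gr\"obner basis $G$ is required, because the relevant Steenrod operation already produces the zero polynomial in $\mathbb{Z}_{2}[w_{1},w_{2},w_{3}]$.
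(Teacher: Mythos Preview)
Your proof is correct and follows essentially the same approach as the paper: identify the basis $\{w_{1}w_{3}^{n-1},\,w_{2}^{2}w_{3}^{n-2}\}$ of $H^{3n-2}(G_{3,n};\mathbb{Z}_{2})$ via Corollary~\ref{c2}, apply formula~(\ref{fsa1}), and observe that the coefficients vanish because $n$ is even. The paper simply cites Corollary~\ref{c2} for the basis without the explicit enumeration you carry out; one small remark is that your constraint $b+2c\leq 2s$ already forces $s\geq n-\tfrac{2}{3}$, hence $s=n$ directly, so the cases $s\in\{n-2,n-1\}$ never actually arise.
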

\begin{proof} The set $\{w_{1}w_{3}^{n-1},w_{2}^{2}w_{3}^{n-2}\}$ is a vector space basis for $H^{3n-2}(G_{3,n};\mathbb{Z}_{2})$ (Corollary \ref{c2}). According to (\ref{fsa1}), we have:
\[Sq^{1}(w_{1}w_{3}^{n-1})=nw_{1}^{2}w_{3}^{n-1}=0;\]
\[Sq^{1}(w_{2}^{2}w_{3}^{n-2})=nw_{1}w_{2}^{2}w_{3}^{n-2}+2w_{2}w_{3}^{n-1}=0\]
and we are done.
\end{proof}

\begin{lemma}\label{l24} The map $(Sq^{2}+w_{2}(\nu )):H^{3n-3}(G_{3,n};\mathbb{Z}_{2})\rightarrow H^{3n-1}(G_{3,n};\mathbb{Z}_{2})$, where $n\equiv 6\imod 8$, is given by the equalities:
\begin{enumerate}
\item[] $(Sq^{2}+w_{2}(\nu ))(w_{1}w_{2}w_{3}^{n-2})=w_{2}w_{3}^{n-1}\neq 0$;
\item[] $(Sq^{2}+w_{2}(\nu ))(w_{2}^{3}w_{3}^{n-3})=0$;
\item[] $(Sq^{2}+w_{2}(\nu ))(w_{3}^{n-1})=w_{2}w_{3}^{n-1}$.
\end{enumerate}
\end{lemma}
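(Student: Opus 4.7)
My plan is to handle the three targets one at a time (these monomials form a basis of $H^{3n-3}(G_{3,n};\mathbb{Z}_{2})$ by Corollary \ref{c2}), following exactly the template of Lemma \ref{l22}: substitute $w_{2}(\nu)=w_{1}^{2}+w_{2}$ from Lemma \ref{l21}(c), expand $Sq^{2}$ via formula (\ref{fsa2}), reduce the binomial coefficients modulo $2$, and then reduce the resulting polynomial modulo $I_{3,n}$ by subtracting off the appropriate elements $g_{m,l}$ of the Gr\"obner basis $G$, read directly from Table \ref{tab1}. Non-triviality in cohomology is then a direct appeal to Corollary \ref{c2}.

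A single preliminary computation makes everything go through cleanly: from $n\equiv 6\imod 8$ one has $n\equiv 0$ and $n-1\equiv 1$ (mod $2$), $\bigl(\begin{smallmatrix}n\\2\end{smallmatrix}\bigr)\equiv 1\imod 2$ (as $n/2$ is odd), and $\bigl(\begin{smallmatrix}n-1\\2\end{smallmatrix}\bigr)\equiv 0\imod 2$ (by Lucas, since the two lowest binary digits of $n-1$ are $01$). These four congruences collapse most of the summands on the right-hand side of (\ref{fsa2}) in every case, so what remains is short.

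For $w_{1}w_{2}w_{3}^{n-2}$, the expansion of $(Sq^{2}+w_{1}^{2}+w_{2})(w_{1}w_{2}w_{3}^{n-2})$ reduces, after the cancellations above, to the single monomial $w_{1}^{2}w_{3}^{n-1}$; the Gr\"obner basis relation $g_{0,n-1}=w_{1}^{2}w_{3}^{n-1}+w_{2}w_{3}^{n-1}$ from Table \ref{tab1} then turns this into $w_{2}w_{3}^{n-1}$, which is nonzero in $H^{3n-1}(G_{3,n};\mathbb{Z}_{2})$ by Corollary \ref{c2}. The computation for $w_{3}^{n-1}$ is a shorter version of the same argument: after cancellation the expression again collapses to $w_{1}^{2}w_{3}^{n-1}$, and one more use of $g_{0,n-1}$ produces $w_{2}w_{3}^{n-1}$.

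For $w_{2}^{3}w_{3}^{n-3}$ the expansion is slightly longer but mechanically identical: the image comes out to $w_{1}w_{2}^{2}w_{3}^{n-2}+w_{2}^{4}w_{3}^{n-3}+w_{2}w_{3}^{n-1}$, and Table \ref{tab1} supplies $g_{2,n-2}=w_{1}w_{2}^{2}w_{3}^{n-2}$ together with $g_{4,n-3}=w_{2}^{4}w_{3}^{n-3}+w_{2}w_{3}^{n-1}$, so both blocks vanish and the output is $0$. I expect no real obstacle anywhere; the only thing to watch is the bookkeeping of binomial parities, which is why I would isolate the mod $2$ reductions at the start.
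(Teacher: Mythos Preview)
Your proposal is correct and follows exactly the paper's approach: the paper also invokes Corollary \ref{c2} for the basis, uses Lemma \ref{l21}(c) and formula (\ref{fsa2}), reduces by the parities of $n$, $n-1$, $\binom{n}{2}$, $\binom{n-1}{2}$, and then applies the Gr\"obner relations $g_{0,n-1}$ (for the first and third classes) and $g_{2,n-2}$, $g_{4,n-3}$ (for the second). In fact the paper only writes out the first computation in full and declares the other two ``similar,'' so your outline is slightly more detailed than the original.
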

\begin{proof} Again from Corollary \ref{c2}, we see that the set $\{w_{1}w_{2}w_{3}^{n-2},w_{2}^{3}w_{3}^{n-3},w_{3}^{n-1}\}$ is a vector space basis for $H^{3n-3}(G_{3,n};\mathbb{Z}_{2})$ and the class $w_{2}w_{3}^{n-1}$ is nontrivial in $H^{3n-1}(G_{3,n};\mathbb{Z}_{2})\cong \mathbb{Z}_{2}$.

We proceed to the calculation:
\[(Sq^{2}+w_{2}(\nu ))(w_{1}w_{2}w_{3}^{n-2})=(Sq^{2}+w_{1}^{2}+w_{2})(w_{1}w_{2}w_{3}^{n-2})\]
\[={n\choose 2}w_{1}^{3}w_{2}w_{3}^{n-2}+(n-1)w_{1}^{2}w_{3}^{n-1}+(n-1)w_{1}w_{2}^{2}w_{3}^{n-2}+w_{1}^{3}w_{2}w_{3}^{n-2}+w_{1}w_{2}^{2}w_{3}^{n-2}\]
\[=w_{1}^{2}w_{3}^{n-1}=g_{0,n-1}+w_{2}w_{3}^{n-1}=w_{2}w_{3}^{n-1},\]
by formula (\ref{fsa2}), Lemma \ref{l21} (c) and Gr\"obner basis from Theorem \ref{t3}. The proofs of the other two equalities are similar.
\end{proof}

\begin{lemma}\label{l25} The map $Sq^{1}:H^{3n-3}(G_{3,n};\mathbb{Z}_{2})\rightarrow H^{3n-2}(G_{3,n};\mathbb{Z}_{2})$, where $n\equiv 6\imod 8$, is given by the equalities:
\begin{enumerate}
\item[] $Sq^{1}(w_{1}w_{2}w_{3}^{n-2})=w_{1}w_{3}^{n-1}$;
\item[] $Sq^{1}(w_{2}^{3}w_{3}^{n-3})=w_{2}^{2}w_{3}^{n-2}$;
\item[] $Sq^{1}(w_{3}^{n-1})=w_{1}w_{3}^{n-1}$.
\end{enumerate}
\end{lemma}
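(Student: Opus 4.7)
The plan is to compute each $Sq^{1}$ directly from formula (\ref{fsa1}), exploiting the parity of $n$, and observing that no further reduction via the Gr\"obner basis is needed. Since $n\equiv 6\imod 8$, we have $n$ even, $n-1$ odd, and $3\equiv 1\imod 2$. Also, by Corollary \ref{c2}, the monomials $w_{1}w_{2}w_{3}^{n-2}$, $w_{2}^{3}w_{3}^{n-3}$, $w_{3}^{n-1}$ form a basis of $H^{3n-3}(G_{3,n};\mathbb{Z}_{2})$ and $w_{1}w_{3}^{n-1}$, $w_{2}^{2}w_{3}^{n-2}$ form a basis of $H^{3n-2}(G_{3,n};\mathbb{Z}_{2})$, so once the images are written as sums of monomials with exponent-sum at most $n$, no rewriting modulo $G$ is required.

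First I would apply (\ref{fsa1}) to $w_{1}w_{2}w_{3}^{n-2}$ (here $a=b=1$, $c=n-2$, $a+b+c=n$), obtaining $Sq^{1}(w_{1}w_{2}w_{3}^{n-2})=n\cdot w_{1}^{2}w_{2}w_{3}^{n-2}+w_{1}w_{3}^{n-1}=w_{1}w_{3}^{n-1}$ since $n$ is even. Next, for $w_{2}^{3}w_{3}^{n-3}$ ($a=0$, $b=3$, $c=n-3$, $a+b+c=n$), formula (\ref{fsa1}) gives $n\cdot w_{1}w_{2}^{3}w_{3}^{n-3}+3\cdot w_{2}^{2}w_{3}^{n-2}=w_{2}^{2}w_{3}^{n-2}$. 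Finally, for $w_{3}^{n-1}$ ($a=b=0$, $c=n-1$, $a+b+c=n-1$), one gets $(n-1)w_{1}w_{3}^{n-1}+0=w_{1}w_{3}^{n-1}$ since $n-1$ is odd.

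There is essentially no obstacle here: unlike Lemmas \ref{l22} and \ref{l24}, the images produced by (\ref{fsa1}) are already elements of the vector space basis from Corollary \ref{c2}, so no elements of the Gr\"obner basis $G$ from Theorem \ref{t3} need to be invoked. The whole proof is a three-line parity check after unpacking (\ref{fsa1}).
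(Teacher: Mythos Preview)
Your proof is correct and essentially identical to the paper's: both apply formula (\ref{fsa1}) directly to the three basis monomials and use the parity of $n$ to simplify, with no Gr\"obner reduction needed. The only difference is that you make explicit why no reduction is required, which the paper leaves implicit.
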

\begin{proof} As we have already noticed in the proof of the previous lemma, the classes $w_{1}w_{2}w_{3}^{n-2}$, $w_{2}^{3}w_{3}^{n-3}$ and $w_{3}^{n-1}$ form a vector space basis for $H^{3n-3}(G_{3,n};\mathbb{Z}_{2})$. According to formula (\ref{fsa1}), we have:
\begin{eqnarray*}
Sq^{1}(w_{1}w_{2}w_{3}^{n-2}) &= & nw_{1}^{2}w_{2}w_{3}^{n-2}+w_{1}w_{3}^{n-1}=w_{1}w_{3}^{n-1};\\
Sq^{1}(w_{2}^{3}w_{3}^{n-3}) & = &nw_{1}w_{2}^{3}w_{3}^{n-3}+3w_{2}^{2}w_{3}^{n-2}=w_{2}^{2}w_{3}^{n-2};\\
Sq^{1}(w_{3}^{n-1}) & = & (n-1)w_{1}w_{3}^{n-1}=w_{1}w_{3}^{n-1},
\end{eqnarray*} 

and the lemma is proved.
\end{proof}

\begin{lemma}\label{l26} If $n\equiv 6\imod 8$, then in $H^{*}(G_{3,n};\mathbb{Z}_{2})$ we have
\[Sq^{2}(w_{1}w_{2}w_{3}^{n-2}+w_{3}^{n-1})=w_{2}w_{3}^{n-1}.\]
\end{lemma}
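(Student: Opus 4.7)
The plan is to reduce the claim directly to Lemma \ref{l24} and Lemma \ref{l21}(c). Since $w_2(\nu) = w_1^2 + w_2$ in this case by Lemma \ref{l21}(c), I can write
\[Sq^2 = \bigl(Sq^2 + w_2(\nu)\bigr) + (w_1^2 + w_2),\]
so that computing $Sq^2(w_1 w_2 w_3^{n-2} + w_3^{n-1})$ splits into a part already handled by Lemma \ref{l24} and a purely multiplicative part.

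First, by Lemma \ref{l24} applied to both summands,
\[(Sq^2 + w_2(\nu))(w_1 w_2 w_3^{n-2}) + (Sq^2 + w_2(\nu))(w_3^{n-1}) = w_2 w_3^{n-1} + w_2 w_3^{n-1} = 0.\]
So $Sq^2(w_1 w_2 w_3^{n-2} + w_3^{n-1})$ equals the multiplicative correction
\[(w_1^2 + w_2)(w_1 w_2 w_3^{n-2} + w_3^{n-1}) = w_1^3 w_2 w_3^{n-2} + w_1 w_2^2 w_3^{n-2} + w_1^2 w_3^{n-1} + w_2 w_3^{n-1}.\]

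The remaining step is to observe that the first three summands combine into a Gr\"obner basis element times $w_1$. From Table \ref{tab1} we have $g_{1,n-2} = w_1^2 w_2 w_3^{n-2} + w_1 w_3^{n-1} + w_2^2 w_3^{n-2}$, so
\[w_1 \cdot g_{1,n-2} = w_1^3 w_2 w_3^{n-2} + w_1^2 w_3^{n-1} + w_1 w_2^2 w_3^{n-2},\]
which vanishes in $H^*(G_{3,n};\mathbb{Z}_2)$ since $g_{1,n-2} \in I_{3,n}$ by Theorem \ref{t3}. This leaves exactly $w_2 w_3^{n-1}$, as required.

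There is essentially no obstacle here: the only thing to recognize is the lucky cancellation between the two outputs of Lemma \ref{l24} and the observation that $w_1^3 w_2 w_3^{n-2} + w_1 w_2^2 w_3^{n-2} + w_1^2 w_3^{n-1}$ is the expansion of $w_1 \cdot g_{1,n-2}$. Alternatively, one could simply apply (\ref{fsa2}) directly to each of $w_1 w_2 w_3^{n-2}$ and $w_3^{n-1}$ (using $n \equiv 6 \pmod 8$ to evaluate the binomial coefficients $\binom{n}{2} \equiv 1$ and $\binom{n-1}{2} \equiv 0$) and then reduce via $g_{1,n-2}$; this route bypasses Lemma \ref{l24} but performs the identical final reduction.
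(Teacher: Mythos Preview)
Your proof is correct, and the final reduction via $w_1 g_{1,n-2}$ is exactly what the paper does. The only difference is that the paper takes the direct route you describe at the end---applying (\ref{fsa2}) straight to $w_1 w_2 w_3^{n-2}$ and $w_3^{n-1}$ with $\binom{n}{2}\equiv 1$, $n-1\equiv 1$, $\binom{n-1}{2}\equiv 0$---rather than detouring through Lemma~\ref{l24}; both arrive at $w_1^3 w_2 w_3^{n-2} + w_1^2 w_3^{n-1} + w_1 w_2^2 w_3^{n-2} + w_2 w_3^{n-1} = w_1 g_{1,n-2} + w_2 w_3^{n-1}$.
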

\begin{proof} Using (\ref{fsa2}), we calculate:
\[Sq^{2}(w_{1}w_{2}w_{3}^{n-2}+w_{3}^{n-1})=\bigl(\begin{smallmatrix} n\\ 2\end{smallmatrix}\bigr) w_{1}^{3}w_{2}w_{3}^{n-2}+(n-1)w_{1}^{2}w_{3}^{n-1}+(n-1)w_{1}w_{2}^{2}w_{3}^{n-2}\]
\[+\bigl(\begin{smallmatrix} n-1\\ 2\end{smallmatrix}\bigr) w_{1}^{2}w_{3}^{n-1}+(n-1)w_{2}w_{3}^{n-1}
=w_{1}^{3}w_{2}w_{3}^{n-2}+w_{1}^{2}w_{3}^{n-1}+w_{1}w_{2}^{2}w_{3}^{n-2}+w_{2}w_{3}^{n-1}\]
\[=w_{1}g_{1,n-2}+w_{2}w_{3}^{n-1}=w_{2}w_{3}^{n-1}\]
and we are done.
\end{proof}

Now, we are ready to prove Theorem \ref{theorem3}.

\medskip

\noindent{\bf Proof of Theorem \ref{theorem3}.} We shall prove that the classifying map for the stable normal bundle $\nu$ of $G_{3,n}$, $f_{\nu}:G_{3,n}\rightarrow BO$, can be lifted up to $BO(3n-5)$. The $3n$-MPT for the fibration $p:BO(3n-5)\rightarrow BO$ and the table of $k$-invariants of this tower are given below.
\[\bfig
 \morphism<900,0>[G_{3,n}`BO;f_{\nu}]
 \morphism(900,0)<1100,0>[BO`K_{3n-4}\times K_{3n-2};w_{3n-4}\times w_{3n-2}]
 \morphism(900,500)|r|<0,-500>[E_{1}`BO;q_{1}]
 \morphism(900,500)<1100,0>[E_{1}`K_{3n-3}\times K_{3n-2}\times K_{3n-1};k_{1}^{2}\times k_{2}^{2}\times k_{3}^{2}]
 \morphism(900,1000)|r|<0,-500>[E_{2}`E_{1};q_{2}]
 \morphism(900,1000)<1100,0>[E_{2}`K_{3n-2};k_{1}^{3}]
 \morphism(900,1500)|r|<0,-500>[E_{3}`E_{2};q_{3}]
 \morphism/-->/<900,500>[G_{3,n}`E_{1};g]
 \morphism/-->/<900,1000>[G_{3,n}`E_{2};h]
 \efig\]
\begin{table}[ht]
\label{eqtable}
\renewcommand\arraystretch{1.5}
\noindent\[
\begin{array}{|l|}
\hline
k_{1}^{2}: \quad (Sq^{2}+w_{2})w_{3n-4}=0\\
\hline
k_{2}^{2}: \quad (Sq^{2}+w_{1}^{2}+w_{2})Sq^{1}w_{3n-4}+Sq^{1}w_{3n-2}=0\\
\hline
k_{3}^{2}: \quad (Sq^{4}+w_{4})w_{3n-4}+Sq^{2}w_{3n-2}=0\\
\hline
k_{1}^{3}: \quad (Sq^{2}+w_{2})k_{1}^{2}+Sq^{1}k_{2}^{2}=0\\
\hline
\end{array}
\]
\end{table}

According to Lemma \ref{l21} (parts (a) and (b)), $f_{\nu}^{*}(w_{3n-4})=w_{3n-4}(\nu )=0$ and $f_{\nu}^{*}(w_{3n-2})=w_{3n-2}(\nu )=0$, so there is a lifting $g_{1}:G_{3,n}\rightarrow E_{1}$ of $f_{\nu}$.

\medskip

In order to make the next step (to lift $f_{\nu}$ up to $E_{2}$), we need to modify $g_{1}$  (if necessary) to a lifting $g$ such that $g^{*}(k_{1}^{2})=g^{*}(k_{2}^{2})=g^{*}(k_{3}^{2})=0$. By choosing a map $\alpha \times \beta :G_{3,n}\rightarrow K_{3n-5}\times K_{3n-3}=\Omega (K_{3n-4}\times K_{3n-2})$ (i.e., classes $\alpha \in H^{3n-5}(G_{3,n};\mathbb{Z}_{2})$ and $\beta \in H^{3n-3}(G_{3,n};\mathbb{Z}_{2})$), we get another lifting $g:G_{3,n}\rightarrow E_{1}$ as the composition:
\[\bfig
 \morphism<800,0>[G_{3,n}`G_{3,n}\times G_{3,n};\triangle]
 \morphism(800,0)<1200,0>[G_{3,n}\times G_{3,n}`K_{3n-5}\times K_{3n-3}\times E_{1};(\alpha \times \beta)\times g_{1}]
 \morphism(2000,0)<800,0>[K_{3n-5}\times K_{3n-3}\times E_{1}`E_{1},;\mu]
 \efig\]
where $\triangle$ is the diagonal mapping and $\mu :\Omega (K_{3n-4}\times K_{3n-2})\times E_{1}\rightarrow E_{1}$ is the action of the fibre in the principal fibration $q_{1}:E_{1}\rightarrow BO$. So, we are looking for classes $\alpha$ and $\beta$ such that $g^{*}(k_{1}^{2})=g^{*}(k_{2}^{2})=g^{*}(k_{3}^{2})=0$. By looking at the relations that produce the $k$-invariants $k_{1}^{2},k_{2}^{2}$ and $k_{3}^{2}$ we conclude that the following equalities hold (see \cite[p.\,95]{Gitler}):
\begin{enumerate}
\item[] $g^{*}(k_{1}^{2})=g_{1}^{*}(k_{1}^{2})+(Sq^{2}+w_{2}(\nu ))(\alpha )$;
\item[] $g^{*}(k_{2}^{2})=g_{1}^{*}(k_{2}^{2})+(Sq^{2}+w_{1}(\nu )^{2}+w_{2}(\nu ))Sq^{1}\alpha +Sq^{1}\beta$;
\item[] $g^{*}(k_{3}^{2})=g_{1}^{*}(k_{3}^{2})+(Sq^{4}+w_{4}(\nu ))(\alpha )+Sq^{2}\beta.$
\end{enumerate}
First we need to prove that $g_{1}^{*}(k_{1}^{2})$ is in the image of the map $(Sq^{2}+w_{2}(\nu )):H^{3n-5}(G_{3,n};\mathbb{Z}_{2})\rightarrow H^{3n-3}(G_{3,n};\mathbb{Z}_{2})$. Observe the relation $(Sq^{2}+w_{2})k_{1}^{2}+Sq^{1}k_{2}^{2}=0$ in $H^{*}(E_{1};\mathbb{Z}_{2})$ (which produces $k_{1}^{3}$). If we pull back this relation by $g_{1}^{*}$ to $H^{*}(G_{3,n};\mathbb{Z}_{2})$, we get: \[(Sq^{2}+w_{2}(\nu ))g_{1}^{*}(k_{1}^{2})=Sq^{1}g_{1}^{*}(k_{2}^{2}).\]
By Lemma \ref{l23}, $Sq^{1}g_{1}^{*}(k_{2}^{2})=0$ and we conclude that the class $g_{1}^{*}(k_{1}^{2})$ is in the kernel of the map $(Sq^{2}+w_{2}(\nu )):H^{3n-3}(G_{3,n};\mathbb{Z}_{2})\rightarrow H^{3n-1}(G_{3,n};\mathbb{Z}_{2})$. According to lemmas \ref{l22} and \ref{l24}, this kernel is contained in the image of the map $(Sq^{2}+w_{2}(\nu )):H^{3n-5}(G_{3,n};\mathbb{Z}_{2})\rightarrow H^{3n-3}(G_{3,n};\mathbb{Z}_{2})$ and so, we can find a class $\alpha \in H^{3n-5}(G_{3,n};\mathbb{Z}_{2})$ such that $g^{*}(k_{1}^{2})=0$.

By Corollary \ref{c2}, the classes $w_{1}w_{3}^{n-1}$ and $w_{2}^{2}w_{3}^{n-2}$ generate $H^{3n-2}(G_{3,n};\mathbb{Z}_{2})$ and now, from Lemma \ref{l25} it is obvious that there is a class $\beta \in H^{3n-3}(G_{3,n};\mathbb{Z}_{2})$ which produces a lifting $g$ such that $g^{*}(k_{1}^{2})=g^{*}(k_{2}^{2})=0$.

If, for these choices of $\alpha$ and $\beta$, $g^{*}(k_{3}^{2})\neq 0$, i.e., $g^{*}(k_{3}^{2})=w_{2}w_{3}^{n-1}$, we can add the class $w_{1}w_{2}w_{3}^{n-2}+w_{3}^{n-1}$ to $\beta$ and obtain a new $\beta\in H^{3n-3}(G_{3,n};\mathbb{Z}_{2})$. By Lemma \ref{l26}, now we have that $g^{*}(k_{3}^{2})=0$. Finally, since $w_{1}w_{2}w_{3}^{n-2}+w_{3}^{n-1}$ is in the kernel of $Sq^{1}:H^{3n-3}(G_{3,n};\mathbb{Z}_{2})\rightarrow H^{3n-2}(G_{3,n};\mathbb{Z}_{2})$ (Lemma \ref{l25}), we conclude that $g^{*}(k_{1}^{2})=g^{*}(k_{2}^{2})=g^{*}(k_{3}^{2})=0$.

Therefore, we can lift $f_{\nu}$ one more stage, i.e., there is a map $h:G_{3,n}\rightarrow E_{2}$ such that $q_{1}\circ q_{2}\circ h=q_{1}\circ g=f_{\nu}$.

\medskip

For the final step, we observe the relation that produces $k_{1}^{3}$ and note that the indeterminacy of $k_{1}^{3}$ is all of $H^{3n-2}(G_{3,n};\mathbb{Z}_{2})$ (by Lemma \ref{l25}). Hence, the lifting $h:G_{3,n}\rightarrow E_{2}$ can be chosen such that $h^{*}(k_{1}^{3})=0$. This completes the proof of the theorem.  \hfill $\Box$

\medskip

Our next task is to prove Theorem \ref{thm1}.

\begin{lemma}\label{ll2} Let $n\geq 3$ and $n\equiv 1\imod 8$. If $\nu$ is the stable normal bundle of $G_{3,n}$, then
\begin{enumerate}
\item[$\mathrm{(a)}$] $w_{i}(\nu )=0$ for $i\geq 3n-8$;
\item[$\mathrm{(b)}$] $w_{2}(\nu )=0$;
\item[$\mathrm{(c)}$] $w_{4}(\nu )=w_{2}^{2}$.
\end{enumerate}
\end{lemma}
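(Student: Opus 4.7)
The plan is to exploit the fact that, in each of the two explicit formulas (\ref{o1}) and (\ref{o2}) for $w(\nu)$, the exponent $N$ of the factor $(1+w_1+w_2+w_3)^N$ satisfies $N\equiv 4\imod 8$ under our hypothesis. The conditions $n\geq 3$ and $n\equiv 1\imod 8$ force $n\geq 9$ and hence $r\geq 3$, so $2^r$ and $2^{r+1}$ are both divisible by $8$. I would first split into two cases according to whether $n\geq 2^r$ or $n<2^r$: in the former, $n\neq 2^r$ (since $2^r\not\equiv 1\imod 8$), so $n\geq 2^r+1$ and (\ref{o1}) applies with $N:=2^{r+1}-n-3\equiv 4\imod 8$; in the latter, $n\leq 2^r-7\leq 2^r-3$, so (\ref{o2}) applies with $N:=2^r-n-3\equiv 4\imod 8$.

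For parts (b) and (c), I would write $N=4+8M$ and apply the mod $2$ Frobenius identity $(1+w_1+w_2+w_3)^{2^j}\equiv 1+w_1^{2^j}+w_2^{2^j}+w_3^{2^j}$ to rewrite
\[(1+w_1+w_2+w_3)^N\equiv (1+w_1^4+w_2^4+w_3^4)(1+w_1^8+w_2^8+w_3^8)^M\imod 2.\]
Every nonconstant monomial occurring in $(1+w_1^8+w_2^8+w_3^8)^M$ has cohomological degree at least $8$, while $w_2^4$ and $w_3^4$ have cohomological degrees $8$ and $12$ respectively; hence the portion of cohomological degree $\leq 7$ of $(1+w_1+w_2+w_3)^N$ is exactly $1+w_1^4$. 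Multiplying by $1+w_1^4+w_2^2+w_1^2w_2^2+w_3^2$ (which itself has no degree-$2$ summand) and reading off the coefficients in degrees $2$ and $4$ immediately gives $w_2(\nu)=0$ and $w_4(\nu)=w_1^4+(w_1^4+w_2^2)=w_2^2$, establishing (b) and (c).

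For part (a), the top cohomological degree present in the product expression for $w(\nu)$ is $6+3N$, so it suffices to show $6+3N<3n-8$, i.e.\ $N\leq n-5$. In the first case this reduces to $2^{r+1}-n-3\leq n-5$, equivalent to $n\geq 2^r+1$, which is our standing hypothesis; in the second case it reduces to $2^r-n-3\leq n-5$, equivalent to $n\geq 2^{r-1}+1$, which follows from the defining inequality $n>\frac{2}{3}\cdot 2^r$ once $r\geq 3$.

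The main obstacle is purely bookkeeping: one must keep straight which of the two formulas (\ref{o1}) and (\ref{o2}) is valid in which range, and verify both $N\equiv 4\imod 8$ and $N\leq n-5$ separately in each case. Once that case analysis is carried out, the rest is just the Frobenius computation together with reading off the low-degree coefficients of a product of two explicit polynomials.
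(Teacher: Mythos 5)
Your proposal is correct and follows the same overall route as the paper: for part (a) bound the top cohomological degree appearing in $w(\nu)$ (it is $6+3N$, and you show $N\leq n-5$ in both ranges), and for (b) and (c) extract the degree-$2$ and degree-$4$ parts of the product formula using $N\equiv 4\imod 8$. The only variation is computational: where the paper reads off $w_2(\nu)$ and $w_4(\nu)$ directly from formula (\ref{o1}) by evaluating the relevant binomial coefficients $\binom{N}{2}$, $\binom{N}{3}$, $\binom{N}{4}$ mod $2$, you instead write $N=4+8M$ and invoke the mod-$2$ Frobenius identity $(1+w_1+w_2+w_3)^N\equiv(1+w_1^4+w_2^4+w_3^4)(1+w_1^8+w_2^8+w_3^8)^M$ to isolate the low-degree portion $1+w_1^4$. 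That is a tidy way to package the same binomial arithmetic, and it also makes the case split unnecessary for (b) and (c) since formula (\ref{o1}) is valid in all cases (the paper in fact uses (\ref{o1}) alone there). Everything checks out; the bookkeeping you flag (which formula applies where, and $N\equiv 4\imod 8$, $N\leq n-5$ in each case) matches the paper's case analysis.
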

\begin{proof} As above, let $r\geq 3$ be the integer such that $2^{r+1}<3n<2^{r+2}$.

If $n\geq 2^{r}$, then $n$ must be $\geq 2^{r}+1$. So we have that $2^{r+1}\leq 2n-2$. The top class in the expression (\ref{o1}), $(w_{1}^{2}w_{2}^{2}+w_{3}^{2})w_{3}^{2^{r+1}-n-3}$, is in degree $6+3\cdot (2^{r+1}-n-3)\leq 6+3\cdot (n-5)=3n-9$ and (a) follows in this case.

If $n<2^{r}$, then we actually have that $n<2^{r}-2$ (since $n\equiv 1\imod 8$), so formula (\ref{o2}) holds. The top class there is in degree $6+3\cdot (2^{r}-n-3)$ and, since $3n>2^{r+1}$, we have that $2^{r}<\frac{3}{2}n$, implying $6+3\cdot (2^{r}-n-3)<6+3\cdot \frac{n-6}{2}<6+3\cdot (n-6)=3n-12$. This proves (a).

Parts (b) and (c) we read off from formula (\ref{o1}) (using the fact that $2^{r+1}-n-3\equiv 4\imod 8$):
\[w_{2}(\nu )=\bigl(\begin{smallmatrix} 2^{r+1}-n-3\\ 2\end{smallmatrix}\bigr) w_{1}^{2}+(2^{r+1}-n-3)w_{2}=0,\]
\[w_{4}(\nu )=w_{1}^{4}+w_{2}^{2}+\bigl(\begin{smallmatrix} 2^{r+1}-n-3\\ 4 \end{smallmatrix}\bigr) w_{1}^{4}+\bigl(\begin{smallmatrix} 2^{r+1}-n-3\\ 3\end{smallmatrix}\bigr) \bigl(\begin{smallmatrix} 3\\ 1\end{smallmatrix}\bigr) w_{1}^{2}w_{2}\]
\[+\bigl(\begin{smallmatrix} 2^{r+1}-n-3\\ 2\end{smallmatrix}\bigr) \bigl(\begin{smallmatrix} 2\\ 1\end{smallmatrix}\bigr) w_{1}w_{3}+\bigl(\begin{smallmatrix} 2^{r+1}-n-3\choose 2\end{smallmatrix}\bigr) w_{2}^{2}=w_{2}^{2}\]
and the lemma follows.
\end{proof}

\begin{lemma}\label{ll3} Let $n\geq 3$, $n\equiv 1\imod 8$. For the map $Sq^{2}:H^{3n-6}(G_{3,n};\mathbb{Z}_{2})\rightarrow H^{3n-4}(G_{3,n};\mathbb{Z}_{2})$ we have:
\begin{enumerate}
\item[] $Sq^{2}(w_{1}^{2}w_{2}^{2}w_{3}^{n-4})=w_{1}^{2}w_{3}^{n-2}+w_{1}w_{2}^{2}w_{3}^{n-3}
    +w_{2}^{4}w_{3}^{n-4}+w_{2}w_{3}^{n-2}$;
\item[] $Sq^{2}(w_{1}w_{2}w_{3}^{n-3})=w_{1}^{2}w_{3}^{n-2}+w_{1}w_{2}^{2}w_{3}^{n-3}$;
\item[] $Sq^{2}(w_{3}^{n-2})=w_{1}^{2}w_{3}^{n-2}+w_{2}w_{3}^{n-2}$.
\end{enumerate}
\end{lemma}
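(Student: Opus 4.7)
The proof is a routine computation that follows the same template used throughout Section \ref{immer}: apply the Cartan-Wu formula (\ref{fsa2}) to each monomial, simplify the binomial coefficients using the hypothesis $n\equiv 1\imod 8$, and then reduce the resulting polynomial modulo the Gr\"obner basis $G$ until it is written in the vector space basis from Corollary \ref{c2}.

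First I would apply (\ref{fsa2}) directly with $(a,b,c)=(2,2,n-4)$, $(1,1,n-3)$, and $(0,0,n-2)$ respectively. The binomial coefficients appearing are $\binom{n}{2}$, $\binom{n-1}{2}$, $\binom{n-2}{2}$, together with the linear factors $n-2$, $n-3$ and the small $\binom{2}{2}$, $\binom{1}{2}$. Since $n\equiv 1\imod 8$, we have $n$ odd, $n-1\equiv 0\imod 8$, $n-2\equiv 7\imod 8$, $n-3\equiv 6\imod 8$; hence $\binom{n}{2}=\frac{n(n-1)}{2}\equiv 0$, $\binom{n-1}{2}=\frac{(n-1)(n-2)}{2}\equiv 0$, $\binom{n-2}{2}=\frac{(n-2)(n-3)}{2}\equiv 1$, and $n-2\equiv 1\imod 2$. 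A straightforward substitution then yields
\[
Sq^{2}(w_{1}^{2}w_{2}^{2}w_{3}^{n-4})=w_{1}^{2}w_{2}^{3}w_{3}^{n-4}+w_{1}^{2}w_{3}^{n-2},
\]
together with $Sq^{2}(w_{1}w_{2}w_{3}^{n-3})=w_{1}^{2}w_{3}^{n-2}+w_{1}w_{2}^{2}w_{3}^{n-3}$ and $Sq^{2}(w_{3}^{n-2})=w_{1}^{2}w_{3}^{n-2}+w_{2}w_{3}^{n-2}$.

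The last two expressions are already sums of distinct basis monomials $w_{1}^{a}w_{2}^{b}w_{3}^{c}$ with $a+b+c\leq n$, so by Corollary \ref{c2} they are exactly the claimed values. For the first equation, the monomial $w_{1}^{2}w_{2}^{3}w_{3}^{n-4}$ has degree sum $n+1$ and must be reduced. From Table \ref{tab1} (or from formulas (\ref{fff1})-(\ref{fff3})),
\[
g_{3,n-4}=w_{1}^{2}w_{2}^{3}w_{3}^{n-4}+w_{1}w_{2}^{2}w_{3}^{n-3}+w_{2}^{4}w_{3}^{n-4}+w_{2}w_{3}^{n-2},
\]
which is zero in $H^{*}(G_{3,n};\mathbb{Z}_{2})$. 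Substituting the resulting identity $w_{1}^{2}w_{2}^{3}w_{3}^{n-4}=w_{1}w_{2}^{2}w_{3}^{n-3}+w_{2}^{4}w_{3}^{n-4}+w_{2}w_{3}^{n-2}$ back into the computation of $Sq^{2}(w_{1}^{2}w_{2}^{2}w_{3}^{n-4})$ gives the first claim.

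There is no real obstacle here; the only mild point of care is tracking the parities of the binomial coefficients under $n\equiv 1\imod 8$, and recognizing that only the single Gr\"obner element $g_{3,n-4}$ is needed to finish the reduction. The other two monomials land directly in the basis of Corollary \ref{c2}, so no reduction is needed for them.
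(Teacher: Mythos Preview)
Your proof is correct and follows essentially the same approach as the paper: apply formula (\ref{fsa2}), simplify using $n\equiv 1\imod 8$, and reduce the single non-basis monomial $w_{1}^{2}w_{2}^{3}w_{3}^{n-4}$ via $g_{3,n-4}$. The computations and the use of the Gr\"obner basis element match the paper's proof exactly.
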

\begin{proof} We use Gr\"obner basis $G$ to calculate:
\[Sq^{2}(w_{1}^{2}w_{2}^{2}w_{3}^{n-4})\!\!=\!\!\bigl(\begin{smallmatrix} n\\ 2\end{smallmatrix}\bigr) w_{1}^{4}w_{2}^{2}w_{3}^{n-4}+2(n-2)w_{1}^{3}w_{2}w_{3}^{n-3}+(n-2)w_{1}^{2}w_{2}^{3}w_{3}^{n-4}+\bigl(\begin{smallmatrix} 2\\ 2\end{smallmatrix}\bigr) w_{1}^{2}w_{3}^{n-2}\]
\[=w_{1}^{2}w_{2}^{3}w_{3}^{n-4}+w_{1}^{2}w_{3}^{n-2}=g_{3,n-4}+w_{1}w_{2}^{2}w_{3}^{n-3}
+w_{2}^{4}w_{3}^{n-4}+w_{2}w_{3}^{n-2}+w_{1}^{2}w_{3}^{n-2}.\]
Since $g_{m,l}=0$ in $H^{*}(G_{3,n};\mathbb{Z}_{2})$, we obtain the first equality. Also,
\[Sq^{2}(w_{1}w_{2}w_{3}^{n-3})=\bigl(\begin{smallmatrix} n-1\\ 2\end{smallmatrix}\bigr) w_{1}^{3}w_{2}w_{3}^{n-3}+(n-2)w_{1}^{2}w_{3}^{n-2}+(n-2)w_{1}w_{2}^{2}w_{3}^{n-3}\]
and using the congruence $n\equiv 1\imod 8$, we directly get the second equality. Similarly,
\[Sq^{2}(w_{3}^{n-2})=\bigl(\begin{smallmatrix} n-2\\ 2\end{smallmatrix}\bigr) w_{1}^{2}w_{3}^{n-2}+(n-2)w_{2}w_{3}^{n-2}=w_{1}^{2}w_{3}^{n-2}+w_{2}w_{3}^{n-2}\]
and we are done.
\end{proof}

\begin{lemma}\label{ll4} The map $Sq^{2}:H^{3n-4}(G_{3,n};\mathbb{Z}_{2})\rightarrow H^{3n-2}(G_{3,n};\mathbb{Z}_{2})$, where $n\geq 3$ and $n\equiv 1\imod 8$, is given by the following equalities:
\begin{enumerate}
\item[] $Sq^{2}(w_{1}^{2}w_{3}^{n-2})=w_{1}w_{3}^{n-1}+w_{2}^{2}w_{3}^{n-2}$,
\item[] $Sq^{2}(w_{1}w_{2}^{2}w_{3}^{n-3})=Sq^{2}(w_{2}^{4}w_{3}^{n-4})=Sq^{2}(w_{2}w_{3}^{n-2})=w_{1}w_{3}^{n-1}$.
\end{enumerate}
\end{lemma}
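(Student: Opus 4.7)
The plan is to apply formula (\ref{fsa2}) to each of the four basis monomials, use the congruence $n\equiv 1\imod 8$ to reduce the integer coefficients mod $2$, and then reduce the resulting monomials to normal form using the Gr\"obner basis $G$ from Theorem \ref{t3} (in particular, the explicit entries of Table \ref{tab1} in rows $m=1$ and $m=5$).

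First, for $w_{1}^{2}w_{3}^{n-2}$ (so $a=2,b=0,c=n-2$, $a+b+c=n$), formula (\ref{fsa2}) produces only the $(b+c)$-summand, since $\binom{n}{2}\equiv 0$, $b=0$, and $\binom{b}{2}=0$; hence $Sq^{2}(w_{1}^{2}w_{3}^{n-2})=(n-2)\,w_{1}^{2}w_{2}w_{3}^{n-2}=w_{1}^{2}w_{2}w_{3}^{n-2}$. From Table \ref{tab1} one reads $g_{1,n-2}=w_{1}^{2}w_{2}w_{3}^{n-2}+w_{1}w_{3}^{n-1}+w_{2}^{2}w_{3}^{n-2}$, so reducing mod $I_{3,n}$ yields $w_{1}w_{3}^{n-1}+w_{2}^{2}w_{3}^{n-2}$, as claimed.

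For $w_{1}w_{2}^{2}w_{3}^{n-3}$ (so $a+b+c=n$, $b=2$), all four coefficients are $\binom{n}{2}\equiv 0$, $b(a+c)=2(n-2)\equiv 0$, $b+c=n-1\equiv 0$, and $\binom{b}{2}=1$, so only the last monomial $w_{1}w_{3}^{n-1}$ survives. For $w_{2}^{4}w_{3}^{n-4}$ (so $a+b+c=n$, $b=4$) the same mod-$2$ considerations leave only $(b+c)\,w_{2}^{5}w_{3}^{n-4}=n\,w_{2}^{5}w_{3}^{n-4}=w_{2}^{5}w_{3}^{n-4}$; by $g_{5,n-4}=w_{2}^{5}w_{3}^{n-4}+w_{1}w_{3}^{n-1}$ this reduces to $w_{1}w_{3}^{n-1}$. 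Finally, for $w_{2}w_{3}^{n-2}$ (so $a+b+c=n-1\equiv 0\imod 8$, $b=1$) the only surviving term is $b(a+c)\,w_{1}w_{3}^{n-1}=(n-2)w_{1}w_{3}^{n-1}=w_{1}w_{3}^{n-1}$, because $\binom{n-1}{2}\equiv 0$, $b+c=n-1\equiv 0$, and $\binom{1}{2}=0$.

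The only genuine obstacle is bookkeeping: one must carefully check each binomial coefficient modulo $2$ using $n\equiv 1\imod 8$ (so $n-1\equiv 0$, $n-2\equiv 7$, and $\binom{n}{2}\equiv\binom{n-1}{2}\equiv 0$), and correctly identify the two Gr\"obner basis elements $g_{1,n-2}$ and $g_{5,n-4}$ needed to rewrite $w_{1}^{2}w_{2}w_{3}^{n-2}$ and $w_{2}^{5}w_{3}^{n-4}$ in the cohomology basis from Corollary \ref{c2}. Once these reductions are made, all four stated equalities follow immediately.
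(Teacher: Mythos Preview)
Your proposal is correct and follows essentially the same approach as the paper: apply formula (\ref{fsa2}) to each of the four basis monomials, simplify coefficients using $n\equiv 1\imod 8$, and reduce the surviving terms via the Gr\"obner basis elements $g_{1,n-2}$ and $g_{5,n-4}$ from Table~\ref{tab1}. The paper's computations are identical to yours line by line.
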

\begin{proof} According to Corollary \ref{c2}, the set $\{ w_{1}^{2}w_{3}^{n-2},w_{1}w_{2}^{2}w_{3}^{n-3},w_{2}^{4}w_{3}^{n-4},w_{2}w_{3}^{n-2}\}$ is a vector space basis for $H^{3n-4}(G_{3,n};\mathbb{Z}_{2})$. We proceed to the calculation.
\[Sq^{2}(w_{1}^{2}w_{3}^{n-2})=\bigl(\begin{smallmatrix} n\choose 2\end{smallmatrix}\bigr) w_{1}^{4}w_{3}^{n-2}+(n-2)w_{1}^{2}w_{2}w_{3}^{n-2}=w_{1}^{2}w_{2}w_{3}^{n-2}\]
\[ =g_{1,n-2}+w_{1}w_{3}^{n-1}+w_{2}^{2}w_{3}^{n-2}
=w_{1}w_{3}^{n-1}+w_{2}^{2}w_{3}^{n-2},\]
\[Sq^{2}(w_{1}w_{2}^{2}w_{3}^{n-3})=\bigl(\begin{smallmatrix} n\\ 2\end{smallmatrix}\bigr) w_{1}^{3}w_{2}^{2}w_{3}^{n-3}+2(n-2)w_{1}^{2}w_{2}w_{3}^{n-2}
+(n-1)w_{1}w_{2}^{3}w_{3}^{n-3}+\bigl(\begin{smallmatrix} 2\\ 2\end{smallmatrix}\bigr) w_{1}w_{3}^{n-1}\]
\[=w_{1}w_{3}^{n-1},\]
\[Sq^{2}(w_{2}^{4}w_{3}^{n-4})=\bigl(\begin{smallmatrix} n\\ 2\end{smallmatrix}\bigr) w_{1}^{2}w_{2}^{4}w_{3}^{n-4}+4\cdot (n-4)w_{1}w_{2}^{3}w_{3}^{n-3}
+nw_{2}^{5}w_{3}^{n-4}+\bigl(\begin{smallmatrix} 4\\ 2\end{smallmatrix}\bigr) w_{2}^{2}w_{3}^{n-2}\]\[=w_{2}^{5}w_{3}^{n-4}
=g_{5,n-4}+w_{1}w_{3}^{n-1}=w_{1}w_{3}^{n-1},\]
\[Sq^{2}(w_{2}w_{3}^{n-2})=\bigl(\begin{smallmatrix} n-1\\ 2\end{smallmatrix}\bigr) w_{1}^{2}w_{2}w_{3}^{n-2}+(n-2)w_{1}w_{3}^{n-1}
+(n-1)w_{2}^{2}w_{3}^{n-2}=w_{1}w_{3}^{n-1}.\]
\end{proof}

\begin{lemma}\label{ll5} The map $Sq^{1}:H^{3n-3}(G_{3,n};\mathbb{Z}_{2})\rightarrow H^{3n-2}(G_{3,n};\mathbb{Z}_{2})$, where $n\geq 3$ and $n\equiv 1\imod 8$, is given by the following equalities:
\begin{enumerate}
\item[] $Sq^{1}(w_{1}w_{2}w_{3}^{n-2})=w_{2}^{2}w_{3}^{n-2}$,
\item[] $Sq^{1}(w_{2}^{3}w_{3}^{n-3})=Sq^{1}(w_{3}^{n-1})=0$.
\end{enumerate}
\end{lemma}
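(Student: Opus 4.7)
My plan is to apply formula (\ref{fsa1}) term by term to each of the three basis monomials of $H^{3n-3}(G_{3,n};\mathbb{Z}_2)$, using the congruence $n\equiv 1\imod 8$ to simplify the integer coefficients, and then reduce the resulting polynomials modulo the Gr\"obner basis $G$ by means of a single $g_{m,l}$ identity in each case. No deep structural argument is needed; the lemma is the analogue of Lemma \ref{l25} for a different residue class of $n$ mod $8$, and the only real work is bookkeeping.

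For $Sq^{1}(w_{1}w_{2}w_{3}^{n-2})$ I would set $(a,b,c)=(1,1,n-2)$ in (\ref{fsa1}), obtaining $n w_{1}^{2}w_{2}w_{3}^{n-2}+w_{1}w_{3}^{n-1}$; since $n$ is odd this equals $w_{1}^{2}w_{2}w_{3}^{n-2}+w_{1}w_{3}^{n-1}$. Reading $g_{1,n-2}=w_{1}^{2}w_{2}w_{3}^{n-2}+w_{1}w_{3}^{n-1}+w_{2}^{2}w_{3}^{n-2}$ from Table \ref{tab1} and using $g_{1,n-2}=0$ in $H^{*}(G_{3,n};\mathbb{Z}_{2})$ leaves the residue $w_{2}^{2}w_{3}^{n-2}$.

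For $Sq^{1}(w_{2}^{3}w_{3}^{n-3})$ I would take $(a,b,c)=(0,3,n-3)$, getting $n w_{1}w_{2}^{3}w_{3}^{n-3}+3w_{2}^{2}w_{3}^{n-2}=w_{1}w_{2}^{3}w_{3}^{n-3}+w_{2}^{2}w_{3}^{n-2}$ since $n$ is odd. Table \ref{tab1} gives precisely $g_{3,n-3}=w_{1}w_{2}^{3}w_{3}^{n-3}+w_{2}^{2}w_{3}^{n-2}$, so this expression is zero in cohomology. For $Sq^{1}(w_{3}^{n-1})$ the formula yields $(n-1)w_{1}w_{3}^{n-1}$; the congruence $n\equiv 1\imod 8$ forces $n-1$ to be even, so the class is zero.

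Since each reduction uses a single entry of Table \ref{tab1} that is already computed, I do not anticipate any substantive obstacle; the whole argument is a three-line calculation modeled verbatim on Lemma \ref{l25}. The only pitfall is ensuring that the parities of the multinomial coefficients produced by (\ref{fsa1}) are correctly read off from $n\equiv 1\imod 8$, which matters only through $n\imod 2$ here.
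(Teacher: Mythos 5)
Your proposal matches the paper's proof essentially line for line: both apply formula (\ref{fsa1}) to each basis monomial, use $n$ odd to fix the parities of the coefficients, and reduce modulo the same Gr\"obner basis elements $g_{1,n-2}$ and $g_{3,n-3}$ from Table \ref{tab1}. The observation that only $n\bmod 2$ is actually used is correct, and the calculations are right.
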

\begin{proof} By Corollary \ref{c2}, the classes $w_{1}w_{2}w_{3}^{n-2}$, $w_{2}^{3}w_{3}^{n-3}$ and $w_{3}^{n-1}$ form an additive basis for $H^{3n-3}(G_{3,n};\mathbb{Z}_{2})$ . Using Gr\"obner basis $G$, we have:
\[Sq^{1}(w_{1}w_{2}w_{3}^{n-2})=nw_{1}^{2}w_{2}w_{3}^{n-2}+w_{1}w_{3}^{n-1}=g_{1,n-2}+w_{2}^{2}w_{3}^{n-2}=w_{2}^{2}w_{3}^{n-2},\]
\[Sq^{1}(w_{2}^{3}w_{3}^{n-3})=nw_{1}w_{2}^{3}w_{3}^{n-3}+3w_{2}^{2}w_{3}^{n-2}=w_{1}w_{2}^{3}w_{3}^{n-3}+w_{2}^{2}w_{3}^{n-2}=g_{3,n-3}=0,\]
\[Sq^{1}(w_{3}^{n-1})=(n-1)w_{1}w_{3}^{n-1}=0\]
and the lemma is proved.
\end{proof}

In the proof of the following lemma, we shall make use of the fact that for any cohomology class $u$ and any nonnegative integers $m$ and $k$,
\[Sq^{m}(u^{2^{k}})=
   \left\{
   \begin{array}{ll}
   (Sq^{\frac{m}{2^{k}}}u)^{2^{k}}, \; \quad 2^{k}\mid m  \\
   \quad \quad \quad 0, \; \quad 2^{k}\nmid m \\
   \end{array}
   \right..\]
The case $k=1$ is obtained from Cartan formula and the rest is easily proved by induction on $k$.

\begin{lemma}\label{ll6} For the class $w_{1}w_{2}^{4}w_{3}^{n-5}\in H^{3n-6}(G_{3,n};\mathbb{Z}_{2})$, where $n\geq 3$ and $n\equiv 1\imod 8$, we have the following:
\begin{enumerate}
\item[$\mathrm{(a)}$] $Sq^{2}Sq^{1}(w_{1}w_{2}^{4}w_{3}^{n-5})=w_{3}^{n-1}$,
\item[$\mathrm{(b)}$] $Sq^{2}(w_{1}w_{2}^{4}w_{3}^{n-5})=0$,
\item[$\mathrm{(c)}$] $(Sq^{4}+w_{2}^{2})(w_{1}w_{2}^{4}w_{3}^{n-5})=0$.
\end{enumerate}
\end{lemma}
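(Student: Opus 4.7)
\textbf{Proof plan for Lemma \ref{ll6}.}

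The overall strategy is to reduce each of (a), (b), (c) to explicit polynomial expressions in $\mathbb{Z}_2[w_1,w_2,w_3]$ using formulas (\ref{fsa1}), (\ref{fsa2}) and the Cartan formula for the higher Steenrod squares, and then rewrite the result in the basis of Corollary \ref{c2} by subtracting appropriate elements $g_{m,l}$ of the Gr\"obner basis from Theorem \ref{t3}. The congruence $n\equiv 1\imod 8$ is used repeatedly to kill binomial coefficients modulo $2$.

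For part (a), I first apply (\ref{fsa1}) to get $Sq^{1}(w_{1}w_{2}^{4}w_{3}^{n-5})=n\, w_{1}^{2}w_{2}^{4}w_{3}^{n-5}+4\, w_{1}w_{2}^{3}w_{3}^{n-4}=w_{1}^{2}w_{2}^{4}w_{3}^{n-5}$, since $n$ is odd. Then I apply (\ref{fsa2}) with $(a,b,c)=(2,4,n-5)$; mod $2$ only the first coefficient $\binom{n+1}{2}$ survives (because $b+c=n-1$ and $b(a+c)=4(n-3)$ and $\binom{b}{2}=6$ are all even), and for $n\equiv 1\imod 8$ we have $\binom{n+1}{2}\equiv 1$, so $Sq^{2}Sq^{1}(w_{1}w_{2}^{4}w_{3}^{n-5})=w_{1}^{4}w_{2}^{4}w_{3}^{n-5}$. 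Reduction is then immediate from Table \ref{tab1}: using $g_{4,n-5}$ we replace $w_{1}^{2}w_{2}^{4}w_{3}^{n-5}$ by $w_{2}^{5}w_{3}^{n-5}$; using $g_{5,n-5}$ we replace $w_{1}w_{2}^{5}w_{3}^{n-5}$ by $w_{2}^{4}w_{3}^{n-4}$; and using $g_{4,n-4}$ we replace $w_{1}w_{2}^{4}w_{3}^{n-4}$ by $w_{3}^{n-1}$, giving the claim.

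Part (b) is a direct application of (\ref{fsa2}) with $(a,b,c)=(1,4,n-5)$: the four coefficients are $\binom{n}{2}=n(n-1)/2$, $b(a+c)=4(n-4)$, $b+c=n-1$, $\binom{b}{2}=6$. For $n\equiv 1\imod 8$ we have $8\mid n-1$, so $\binom{n}{2}$ is even, and the other three are visibly even; hence $Sq^{2}(w_{1}w_{2}^{4}w_{3}^{n-5})=0$ already at the level of polynomials.

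Part (c) is the main obstacle, since the paper offers no closed-form analogue of (\ref{fsa1})--(\ref{fsa2}) for $Sq^{4}$. I will compute the total Steenrod square $Sq=\sum_i Sq^i$ of the product via Cartan. From (\ref{fsa1}), (\ref{fsa2}) and Wu's formula, $Sq(w_{1})=w_{1}+w_{1}^{2}$, $Sq(w_{2})=w_{2}+w_{1}w_{2}+w_{3}+w_{2}^{2}$, and $Sq(w_{3})=w_{3}+w_{1}w_{3}+w_{2}w_{3}+w_{3}^{2}=w_{3}(1+w_{1}+w_{2}+w_{3})$. Squaring (in characteristic $2$) gives $Sq(w_{2}^{4})=Sq(w_{2})^{4}=w_{2}^{4}+w_{1}^{4}w_{2}^{4}+w_{3}^{4}+w_{2}^{8}$, so $Sq^{j}(w_{2}^{4})$ is nonzero only for $j\in\{0,4,8\}$ with $Sq^{4}(w_{2}^{4})=w_{1}^{4}w_{2}^{4}+w_{3}^{4}$. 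Similarly $Sq(w_{3}^{n-5})=w_{3}^{n-5}(1+w_{1}+w_{2}+w_{3})^{n-5}$, so $Sq^{k}(w_{3}^{n-5})$ is $w_{3}^{n-5}$ times the degree-$k$ part of $(1+w_{1}+w_{2}+w_{3})^{n-5}$. A parity check of the relevant multinomial coefficients using $n-5\equiv 4\imod 8$ (by Lucas: bit $2$ of $n-5$ is $1$, bit $0$ and bit $1$ are $0$) shows that $Sq^{3}(w_{3}^{n-5})=0$ and $Sq^{4}(w_{3}^{n-5})=w_{1}^{4}w_{3}^{n-5}$. Now Cartan applied to $w_{1}\cdot(w_{2}^{4}w_{3}^{n-5})$, together with $Sq^{i}(w_{1})=0$ for $i\geq 2$, yields
\[Sq^{4}(w_{1}w_{2}^{4}w_{3}^{n-5})=w_{1}\bigl[w_{2}^{4}\cdot w_{1}^{4}w_{3}^{n-5}+(w_{1}^{4}w_{2}^{4}+w_{3}^{4})w_{3}^{n-5}\bigr]=w_{1}w_{3}^{n-1}.\]
Finally, Table \ref{tab1} gives $g_{6,n-5}=w_{2}^{6}w_{3}^{n-5}+w_{3}^{n-1}$, so $w_{2}^{2}\cdot w_{1}w_{2}^{4}w_{3}^{n-5}=w_{1}w_{2}^{6}w_{3}^{n-5}\equiv w_{1}w_{3}^{n-1}$ in $H^{*}(G_{3,n};\mathbb{Z}_{2})$, and (c) follows. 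The main delicate point is the multinomial parity argument for $Sq^{3}$ and $Sq^{4}$ of $w_{3}^{n-5}$; everything else is either direct substitution into (\ref{fsa1})--(\ref{fsa2}) or a short reduction modulo $G$.
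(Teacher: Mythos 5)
Your proof is correct and follows essentially the same strategy as the paper: formulas (\ref{fsa1}), (\ref{fsa2}), the $Sq^m(u^{2^k})$ fourth-power principle, Cartan's formula, and reduction modulo the Gr\"obner basis $G$. The only departures are cosmetic: in (a) you apply $Sq^2$ to the unreduced polynomial $w_1^2w_2^4w_3^{n-5}$ and then reduce $w_1^4w_2^4w_3^{n-5}$ via three Gr\"obner steps, whereas the paper first reduces $Sq^1$ to $w_2^5w_3^{n-5}$ and then applies $Sq^2$; and in (c) you expand $Sq^3$, $Sq^4$ of $w_2^4w_3^{n-5}$ via Cartan on the two separate fourth powers $w_2^4$ and $w_3^{n-5}$ (with a Lucas-style parity check on $(1+w_1+w_2+w_3)^{n-5}$), whereas the paper groups $w_2^4w_3^{n-5}=(w_2w_3^{(n-5)/4})^4$ as a single fourth power so that $Sq^3$ vanishes and $Sq^4=(Sq^1)^4$ at once. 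Both routes hinge on $n\equiv 1\pmod 8$ in the same way, and all your binomial parity claims and Gr\"obner reductions check out.
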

\begin{proof} One has:
\[Sq^{1}(w_{1}w_{2}^{4}w_{3}^{n-5})\!\!=\!\!nw_{1}^{2}w_{2}^{4}w_{3}^{n-5}+4w_{1}w_{2}^{3}w_{3}^{n-4}\!
=\!w_{1}^{2}w_{2}^{4}w_{3}^{n-5}\!=\!g_{4,n-5}+w_{2}^{5}w_{3}^{n-5}\!=\!w_{2}^{5}w_{3}^{n-5}\]
and
\[Sq^{2}Sq^{1}(w_{1}w_{2}^{4}w_{3}^{n-5})=\bigl(\begin{smallmatrix} n\\ 2\end{smallmatrix}\bigr) w_{1}^{2}w_{2}^{5}w_{3}^{n-5}+5(n-5)w_{1}w_{2}^{4}w_{3}^{n-4}+nw_{2}^{6}w_{3}^{n-5}+\bigl(\begin{smallmatrix} 5\\ 2\end{smallmatrix}\bigr) w_{2}^{3}w_{3}^{n-3}\]
\[=w_{2}^{6}w_{3}^{n-5}=g_{6,n-5}+w_{3}^{n-1}=w_{3}^{n-1}.\]
This proves (a). Also,
\[Sq^{2}(w_{1}w_{2}^{4}w_{3}^{n-5})\!\!=\!\!\bigl(\begin{smallmatrix} n\\ 2\end{smallmatrix}\bigr) w_{1}^{3}w_{2}^{4}w_{3}^{n-5}+4(n-4)w_{1}^{2}w_{2}^{3}w_{3}^{n-4}+(n-1)w_{1}w_{2}^{5}w_{3}^{n-5}+\bigl(\begin{smallmatrix} 4\\ 2\end{smallmatrix}\bigr) w_{1}w_{2}^{2}w_{3}^{n-3}\]
and since $n\equiv 1\imod 8$, this is obviously equal to $0$. Finally, for (c) we use Cartan formula and we get:
\[(Sq^{4}+w_{2}^{2})(w_{1}w_{2}^{4}w_{3}^{n-5})
=w_{1}^{2}Sq^{3}(w_{2}^{4}w_{3}^{n-5})+w_{1}Sq^{4}(w_{2}^{4}w_{3}^{n-5})+w_{1}w_{2}^{6}w_{3}^{n-5}.\]
Now, since $n-5$ is divisible by $4$, $w_{2}^{4}w_{3}^{n-5}=\left(w_{2}w_{3}^{\frac{n-5}{4}}\right)^{4}$ and so $Sq^{3}(w_{2}^{4}w_{3}^{n-5})=0$ and
\[Sq^{4}(w_{2}^{4}w_{3}^{n-5})=\left(Sq^{1}\left(w_{2}w_{3}^{\frac{n-5}{4}}\right)\right)^{4}
=\left(\left(1+\frac{n-5}{4}\right)w_{1}w_{2}w_{3}^{\frac{n-5}{4}}+w_{3}^{\frac{n-5}{4}+1}\right)^{4}=w_{3}^{n-1},\]
where the latter equality holds because $\frac{n-5}{4}$ is an odd integer (since $n\equiv 1\imod 8$). We conclude that
\[(Sq^{4}+w_{2}^{2})(w_{1}w_{2}^{4}w_{3}^{n-5})=w_{1}w_{3}^{n-1}+w_{1}w_{2}^{6}w_{3}^{n-5}=w_{1}g_{6,n-5}=0\]
and the proof of the lemma is completed.
\end{proof}

\begin{lemma}\label{ll7} For the classes $w_{1}w_{2}^{2}w_{3}^{n-3}, w_{2}w_{3}^{n-2}\in H^{3n-4}(G_{3,n};\mathbb{Z}_{2})$, where $n\geq 3$ and $n\equiv 1\imod 8$, we have the following:
\begin{enumerate}
\item[$\mathrm{(a)}$] $Sq^{1}(w_{1}w_{2}^{2}w_{3}^{n-3})=w_{2}^{3}w_{3}^{n-3}+w_{3}^{n-1}$, \quad $Sq^{1}(w_{2}w_{3}^{n-2})=w_{3}^{n-1}$;
\item[$\mathrm{(b)}$] $Sq^{2}(w_{1}w_{2}^{2}w_{3}^{n-3}+w_{2}w_{3}^{n-2})=0$.
\end{enumerate}
\end{lemma}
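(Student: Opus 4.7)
The plan is to carry out both parts by direct computation using formulas (\ref{fsa1}) and (\ref{fsa2}), then reduce the results modulo the ideal $I_{3,n}$ using explicit elements of the Gr\"obner basis $G$ that can be read off Table \ref{tab1}. Throughout, I will use the congruence $n\equiv 1\imod 8$ to simplify the mod $2$ values of the multinomial coefficients appearing as scalars.

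For part (a), applying (\ref{fsa1}) with $(a,b,c)=(1,2,n-3)$ gives $Sq^{1}(w_{1}w_{2}^{2}w_{3}^{n-3}) = n\,w_{1}^{2}w_{2}^{2}w_{3}^{n-3}+2w_{1}w_{2}w_{3}^{n-2}$, which reduces modulo $2$ (using that $n$ is odd) to $w_{1}^{2}w_{2}^{2}w_{3}^{n-3}$. From Table \ref{tab1}, $g_{2,n-3}=w_{1}^{2}w_{2}^{2}w_{3}^{n-3}+w_{2}^{3}w_{3}^{n-3}+w_{3}^{n-1}$, so this equals $w_{2}^{3}w_{3}^{n-3}+w_{3}^{n-1}$ in cohomology. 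The second equality comes from (\ref{fsa1}) with $(a,b,c)=(0,1,n-2)$: $Sq^{1}(w_{2}w_{3}^{n-2})=(n-1)w_{1}w_{2}w_{3}^{n-2}+w_{3}^{n-1}$, and $n-1$ is even.

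For part (b), the key observation is that the two summands each contribute a $w_{1}w_{3}^{n-1}$ term under $Sq^{2}$, and these cancel; all other contributions vanish mod $2$ because of the strong divisibility built into $n\equiv 1\imod 8$. Explicitly, (\ref{fsa2}) with $(a,b,c)=(1,2,n-3)$ yields
\[
Sq^{2}(w_{1}w_{2}^{2}w_{3}^{n-3})=\bigl(\begin{smallmatrix}n\\ 2\end{smallmatrix}\bigr)w_{1}^{3}w_{2}^{2}w_{3}^{n-3}+2(n-2)w_{1}^{2}w_{2}w_{3}^{n-2}+(n-1)w_{1}w_{2}^{3}w_{3}^{n-3}+w_{1}w_{3}^{n-1},
\]
and since $n-1\equiv 0\imod 8$ we have $\binom{n}{2}=n(n-1)/2\equiv 0\imod 2$ and $n-1\equiv 0\imod 2$, so only the last term survives. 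Similarly, (\ref{fsa2}) with $(a,b,c)=(0,1,n-2)$ yields
\[
Sq^{2}(w_{2}w_{3}^{n-2})=\bigl(\begin{smallmatrix}n-1\\ 2\end{smallmatrix}\bigr)w_{1}^{2}w_{2}w_{3}^{n-2}+(n-2)w_{1}w_{3}^{n-1}+(n-1)w_{2}^{2}w_{3}^{n-2},
\]
and again $\binom{n-1}{2}\equiv 0$ and $n-1\equiv 0\imod 2$, while $n-2$ is odd, so only $w_{1}w_{3}^{n-1}$ survives. Adding gives $2w_{1}w_{3}^{n-1}=0$.

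No step is really an obstacle here: both parts reduce to careful bookkeeping of binomial coefficients mod $2$ combined with one Gr\"obner-basis reduction via $g_{2,n-3}$. The only mild subtlety is making sure each coefficient is evaluated correctly under $n\equiv 1\imod 8$ (rather than just $n$ odd), which is essential for $\binom{n}{2}$ and $n-1$ to vanish mod $2$ in part (b).
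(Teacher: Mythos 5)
Your proof is correct and follows essentially the same route as the paper: direct application of formulas (\ref{fsa1}) and (\ref{fsa2}) followed by reduction against $g_{2,n-3}$ in part (a), and for part (b) the mod $2$ vanishing of $\binom{n}{2}$, $\binom{n-1}{2}$, and $n-1$ together with the cancellation of the two surviving $w_{1}w_{3}^{n-1}$ terms. The only cosmetic difference is that the paper applies $Sq^{2}$ to the sum in a single display, whereas you treat the two summands separately before adding.
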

\begin{proof} (a) We have:
\[Sq^{1}(w_{1}w_{2}^{2}w_{3}^{n-3})=nw_{1}^{2}w_{2}^{2}w_{3}^{n-3}+2w_{1}w_{2}w_{3}^{n-2}=w_{1}^{2}w_{2}^{2}w_{3}^{n-3}
=g_{2,n-3}+w_{2}^{3}w_{3}^{n-3}+w_{3}^{n-1}\]
\[=w_{2}^{3}w_{3}^{n-3}+w_{3}^{n-1},\]
\[Sq^{1}(w_{2}w_{3}^{n-2})=(n-1)w_{1}w_{2}w_{3}^{n-2}+w_{3}^{n-1}=w_{3}^{n-1}.\]

(b) Similarly,
\[Sq^{2}(w_{1}w_{2}^{2}w_{3}^{n-3}+w_{2}w_{3}^{n-2})={n\choose 2}w_{1}^{3}w_{2}^{2}w_{3}^{n-3}+2(n-2)w_{1}^{2}w_{2}w_{3}^{n-2}+(n-1)w_{1}w_{2}^{3}w_{3}^{n-3}\]
\[+{2\choose 2}w_{1}w_{3}^{n-1}+{n-1\choose 2}w_{1}^{2}w_{2}w_{3}^{n-2}+(n-2)w_{1}w_{3}^{n-1}+(n-1)w_{2}^{2}w_{3}^{n-2}=0\]
and we are done.
\end{proof}

\begin{lemma}\label{ll8} For the class $w_{1}w_{3}^{n-2}\in H^{3n-5}(G_{3,n};\mathbb{Z}_{2})$, where $n\geq 3$ and $n\equiv 1\imod 8$, we have that
\[Sq^{2}(w_{1}w_{3}^{n-2})=w_{1}w_{2}w_{3}^{n-2}.\]
\end{lemma}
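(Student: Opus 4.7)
The plan is to apply formula (\ref{fsa2}) directly to $w_{1}w_{3}^{n-2}$ with $a=1$, $b=0$, $c=n-2$, and then observe that the hypothesis $n\equiv 1\imod 8$ kills all unwanted terms mod $2$, leaving $w_{1}w_{2}w_{3}^{n-2}$ on the nose (no Gr\"obner reduction required).

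Substituting $a=1$, $b=0$, $c=n-2$ into (\ref{fsa2}), the two summands involving $b$ as a factor (the $b(a+c)$ and $\binom{b}{2}$ terms) vanish, and one obtains
\[Sq^{2}(w_{1}w_{3}^{n-2})=\bigl(\begin{smallmatrix}n-1\\ 2\end{smallmatrix}\bigr)w_{1}^{3}w_{3}^{n-2}+(n-2)w_{1}w_{2}w_{3}^{n-2}.\]
The main (and essentially only) point to check is that the first coefficient is even. Since $n\equiv 1\imod 8$, we have $n-1\equiv 0\imod 8$, so the binary expansion of $n-1$ has zeros in its three lowest positions; by Lucas's theorem $\binom{n-1}{2}\equiv 0\imod 2$. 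Also $n-2$ is odd, so the second coefficient equals $1$ mod $2$.

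Therefore $Sq^{2}(w_{1}w_{3}^{n-2})=w_{1}w_{2}w_{3}^{n-2}$. No reduction by elements of $G$ is needed, since the right-hand side is already a basis element of $H^{3n-3}(G_{3,n};\mathbb{Z}_{2})$ in the basis of Corollary \ref{c2} (the exponents satisfy $1+1+(n-2)=n$). I expect no real obstacle: the only non-routine observation is the divisibility $\binom{n-1}{2}\equiv 0\imod 2$, which is immediate from $8\mid n-1$.
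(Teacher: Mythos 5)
Your proof is correct and follows essentially the same route as the paper's: apply formula (\ref{fsa2}) with $a=1$, $b=0$, $c=n-2$, and use $n\equiv 1\imod 8$ to see that $\binom{n-1}{2}\equiv 0$ and $n-2\equiv 1$ mod $2$. The only difference is that you spell out the Lucas-theorem justification for $\binom{n-1}{2}$ being even, which the paper leaves implicit.
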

\begin{proof} We simply calculate:
\[Sq^{2}(w_{1}w_{3}^{n-2})={n-1\choose 2}w_{1}^{3}w_{3}^{n-2}+(n-2)w_{1}w_{2}w_{3}^{n-2}=w_{1}w_{2}w_{3}^{n-2}\]
proving the lemma.
\end{proof}

\medskip

\noindent{\bf Proof of Theorem \ref{thm1}.} Since $n+3$ is even, Grassmannian $G_{3,n}$ is orientable (see \cite[p.\,179]{Oproiu}) and so, we can make the proof slightly easier by using the "orientable" version of Hirsch's theorem which states that a smooth orientable compact $m$-manifold $M^{m}$ immerses into $\mathbb{R}^{m+l}$ if and only if the classifying map $f_{\nu}:M^{m}\rightarrow BSO$ of the stable normal bundle $\nu$ of $M^{m}$ lifts up to $BSO(l)$.
\[\bfig
\morphism<600,0>[M^{m}`BSO;f_{\nu}]
\morphism(600,500)|r|<0,-500>[BSO(l)`BSO;p]
\morphism/-->/<600,500>[M^{m}`BSO(l);]
\efig\]
Hence, we need to lift $f_{\nu}:G_{3,n}\rightarrow BSO$ up to $BSO(3n-6)$. The $3n$-MPT for the fibration $p:BSO(3n-6)\rightarrow BSO$ is given in the following diagram.
\[\bfig
 \morphism<900,0>[G_{3,n}`BSO;f_{\nu}]
 \morphism(900,0)<1100,0>[BSO`K_{3n-5}\times K_{3n-3};w_{3n-5}\times w_{3n-3}]
 \morphism(900,500)|r|<0,-500>[E_{1}`BSO;q_{1}]
 \morphism(900,500)<1100,0>[E_{1}`K_{3n-4}\times K_{3n-3}\times K_{3n-2};k_{1}^{2}\times k_{2}^{2}\times k_{3}^{2}]
 \morphism(900,1000)|r|<0,-500>[E_{2}`E_{1};q_{2}]
 \morphism(900,1000)<1100,0>[E_{2}`K_{3n-3};k_{1}^{3}]
 \morphism(900,1500)|r|<0,-500>[E_{3}`E_{2};q_{3}]
 \morphism/-->/<900,500>[G_{3,n}`E_{1};g]
 \morphism/-->/<900,1000>[G_{3,n}`E_{2};h]
 \efig\]
The table of $k$-invariants is the following one:
\begin{table}[ht]
\label{eqtable}
\renewcommand\arraystretch{1.5}
\noindent\[
\begin{array}{|l|}
\hline
k_{1}^{2}: \quad (Sq^{2}+w_{2})w_{3n-5}=0\\
\hline
k_{2}^{2}: \quad (Sq^{2}+w_{2})Sq^{1}w_{3n-5}+Sq^{1}w_{3n-3}=0\\
\hline
k_{3}^{2}: \quad (Sq^{4}+w_{4})w_{3n-5}+Sq^{2}w_{3n-3}=0\\
\hline
k_{1}^{3}: \quad (Sq^{2}+w_{2})k_{1}^{2}+Sq^{1}k_{2}^{2}=0\\
\hline
\end{array}
\]
\end{table}

Since $\mathrm{dim}(G_{3,n})=3n$, $f_{\nu}$ lifts up to $BSO(3n-6)$ if and only if it lifts up to $E_{3}$.

According to Lemma \ref{ll2} (a), $f_{\nu}^{*}(w_{3n-5})=w_{3n-5}(\nu )=0$ and $f_{\nu}^{*}(w_{3n-3})=w_{3n-3}(\nu )=0$, so there is a lifting $g_{1}:G_{3,n}\rightarrow E_{1}$ of $f_{\nu}$.

Let us remark here that for every lifting $g:G_{3,n}\rightarrow E_{1}$ of $f_{\nu}$, one has
\begin{equation}\label{e1} Sq^{2}(g^{*}(k_{1}^{2}))=Sq^{1}(g^{*}(k_{2}^{2})).
\end{equation}
This is obtained by applying $g^{*}$ to the relation $(Sq^{2}+w_{2})k_{1}^{2}=Sq^{1}k_{2}^{2}$ in $H^{*}(E_{1};\mathbb{Z}_{2})$ (which produces the $k$-invariant $k_{1}^{3}$) and using Lemma \ref{ll2} (b).

\medskip

We have a lifting $g_{1}:G_{3,n}\rightarrow E_{1}$ and in order to make the next step (to lift $f_{\nu}$ up to $E_{2}$), we need to modify $g_{1}$ (if necessary) to a lifting $g$ such that $g^{*}(k_{1}^{2})=g^{*}(k_{2}^{2})=g^{*}(k_{3}^{2})=0$. By choosing a map $\alpha \times \beta :G_{3,n}\rightarrow K_{3n-6}\times K_{3n-4}=\Omega (K_{3n-5}\times K_{3n-3})$ (i.e., classes $\alpha \in H^{3n-6}(G_{3,n};\mathbb{Z}_{2})$ and $\beta \in H^{3n-4}(G_{3,n};\mathbb{Z}_{2})$), we get another lifting $g_{2}:G_{3,n}\rightarrow E_{1}$ (induced by $g_{1}$,$\alpha$ and $\beta$)  as the composition:
\[\bfig
 \morphism<800,0>[G_{3,n}`G_{3,n}\times G_{3,n};\triangle]
 \morphism(800,0)<1200,0>[G_{3,n}\times G_{3,n}`K_{3n-6}\times K_{3n-4}\times E_{1};(\alpha \times \beta)\times g_{1}]
 \morphism(2000,0)<800,0>[K_{3n-6}\times K_{3n-4}\times E_{1}`E_{1},;\mu]
 \efig\]
where $\triangle$ is the diagonal mapping and $\mu :\Omega (K_{3n-5}\times K_{3n-3})\times E_{1}\rightarrow E_{1}$ is the action of the fibre in the principal fibration $q_{1}:E_{1}\rightarrow BSO$. By looking at the relations that produce the $k$-invariants $k_{1}^{2},k_{2}^{2}$ and $k_{3}^{2}$ and using Lemma \ref{ll2} we conclude that the following equalities hold (see \cite[p.\,95]{Gitler}):
\begin{enumerate}
\item[] $g_{2}^{*}(k_{1}^{2})=g_{1}^{*}(k_{1}^{2})+(Sq^{2}+w_{2}(\nu ))(\alpha )=g_{1}^{*}(k_{1}^{2})+Sq^{2}\alpha$;
\item[] $g_{2}^{*}(k_{2}^{2})=g_{1}^{*}(k_{2}^{2})+(Sq^{2}+w_{2}(\nu ))Sq^{1}\alpha +Sq^{1}\beta=g_{1}^{*}(k_{2}^{2})+Sq^{2}Sq^{1}\alpha +Sq^{1}\beta$;
\item[] $g_{2}^{*}(k_{3}^{2})=g_{1}^{*}(k_{3}^{2})+(Sq^{4}+w_{4}(\nu ))(\alpha )+Sq^{2}\beta=g_{1}^{*}(k_{3}^{2})+(Sq^{4}+w_{2}^{2})(\alpha )+Sq^{2}\beta.$
\end{enumerate}
First we need to prove that $g_{1}^{*}(k_{1}^{2})$ is in the image of the map $Sq^{2}:H^{3n-6}(G_{3,n};\mathbb{Z}_{2})\rightarrow H^{3n-4}(G_{3,n};\mathbb{Z}_{2})$. Let us assume, to the contrary, that $g_{1}^{*}(k_{1}^{2})$ is not in this image. The classes $w_{1}^{2}w_{3}^{n-2}$, $w_{1}w_{2}^{2}w_{3}^{n-3}$, $w_{2}^{4}w_{3}^{n-4}$ and $w_{2}w_{3}^{n-2}$ form a vector space basis for $H^{3n-4}(G_{3,n};\mathbb{Z}_{2})$ (Corollary \ref{c2}) and from Lemma \ref{ll3} we conclude that the sum of all basis elements and the sum of any two basis elements are in the image of $Sq^{2}$. This means that $g_{1}^{*}(k_{1}^{2})$ is either a basis element or a sum of three distinct basis elements. Now, by looking at Lemma \ref{ll4}, we see that $Sq^{2}(g_{1}^{*}(k_{1}^{2}))\in \{ w_{1}w_{3}^{n-1},w_{1}w_{3}^{n-1}+w_{2}^{2}w_{3}^{n-2}\}$ and from formula (\ref{e1}) we have that $Sq^{2}(g_{1}^{*}(k_{1}^{2}))=Sq^{1}(g_{1}^{*}(k_{2}^{2}))$. But according to Lemma \ref{ll5}, $Sq^{1}(g_{1}^{*}(k_{2}^{2}))$ cannot belong to $\{ w_{1}w_{3}^{n-1},w_{1}w_{3}^{n-1}+w_{2}^{2}w_{3}^{n-2}\}$. This contradiction proves that we can find a class $\alpha \in H^{3n-6}(G_{3,n};\mathbb{Z}_{2})$ such that $Sq^{2}\alpha =g_{1}^{*}(k_{1}^{2})$.

The set $\{ w_{1}w_{3}^{n-1},w_{2}^{2}w_{3}^{n-2}\}$ is a vector space basis for $H^{3n-2}(G_{3,n};\mathbb{Z}_{2})$ (Corollary \ref{c2}) and by Lemma \ref{ll4}, there is a class $\beta \in H^{3n-4}(G_{3,n};\mathbb{Z}_{2})$ such that $Sq^{2}\beta =g_{1}^{*}(k_{3}^{2})+(Sq^{4}+w_{2}^{2})(\alpha )$ and so we have a lifting $g_{2}:G_{3,n}\rightarrow E_{1}$ (induced by $g_{1}$ and these classes $\alpha$ and $\beta$) such that $g_{2}^{*}(k_{1}^{2})=g_{2}^{*}(k_{3}^{2})=0$.

There is one more obstruction to lifting $f_{\nu}$ up to $E_{2}$: $g_{2}^{*}(k_{2}^{2})\in H^{3n-3}(G_{3,n};\mathbb{Z}_{2})$. Since $g_{2}^{*}(k_{1}^{2})=0$, by equality (\ref{e1}), we have that $Sq^{1}(g_{2}^{*}(k_{2}^{2}))=0$ and according to Lemma \ref{ll5}, $g_{2}^{*}(k_{2}^{2})$ must be in the subgroup of $H^{3n-3}(G_{3,n};\mathbb{Z}_{2})$ generated by $w_{2}^{3}w_{3}^{n-3}$ and $w_{3}^{n-1}$. Observe the classes $\alpha':=w_{1}w_{2}^{4}w_{3}^{n-5}\in H^{3n-6}(G_{3,n};\mathbb{Z}_{2})$ and $\beta':=w_{1}w_{2}^{2}w_{3}^{n-3}+w_{2}w_{3}^{n-2}\in H^{3n-4}(G_{3,n};\mathbb{Z}_{2})$. By Lemma \ref{ll6} (a), $Sq^{2}Sq^{1}\alpha'=w_{3}^{n-1}$ and according to Lemma \ref{ll7} (a), $Sq^{1}\beta'=w_{2}^{3}w_{3}^{n-3}$. This means that we can choose the coefficients $a,b\in \{ 0,1\}$ such that $Sq^{2}Sq^{1}(a\alpha')+Sq^{1}(b\beta')=g_{2}^{*}(k_{2}^{2})$. Finally, from Lemma \ref{ll6}, parts (b) and (c), and Lemma \ref{ll7} (b), we conclude that for the lifting $g:G_{3,n}\rightarrow E_{1}$ induced by $g_{2}$ and the classes $a\alpha'$ and $b\beta'$, all obstructions vanish, i.e., $g^{*}(k_{1}^{2})=g^{*}(k_{2}^{2})=g^{*}(k_{3}^{2})=0$.

Therefore, the lifting $g$ lifts up to $E_{2}$, i.e., there is a map $h:G_{3,n}\rightarrow E_{2}$ such that $q_{1}\circ q_{2}\circ h=q_{1}\circ g=f_{\nu}$.

\medskip

For the final step, we observe that the set $\{w_{1}w_{2}w_{3}^{n-2},w_{2}^{3}w_{3}^{n-3},w_{3}^{n-1}\}$ is a vector space basis for $H^{3n-3}(G_{3,n};\mathbb{Z}_{2})$ (Corollary \ref{c2}). By looking at the relation that produces the $k$-invariant $k_{1}^{3}$ and according to Lemma \ref{ll7} (a) and Lemma \ref{ll8}, one sees that the indeterminacy of $k_{1}^{3}$ is all of $H^{3n-3}(G_{3,n};\mathbb{Z}_{2})$. Hence, the lifting $h:G_{3,n}\rightarrow E_{2}$ can be chosen such that $h^{*}(k_{1}^{3})=0$. This completes the proof of the theorem.  \qed

\medskip

We are left to prove Theorem \ref{theorem4}. Several lemmas will be helpful.

\begin{lemma}\label{l27} Let $n\geq 3$ and $n\equiv 2\imod 8$. If $\nu$ is the stable normal bundle of $G_{3,n}$, then
\begin{enumerate}
\item[$\mathrm{(a)}$] $w_{i}(\nu )=0$ for $i\geq 3n-14$;
\item[$\mathrm{(b)}$] $w_{1}(\nu )=w_{1}$;
\item[$\mathrm{(c)}$] $w_{2}(\nu )=w_{1}^{2}+w_{2}$;
\item[$\mathrm{(d)}$] $w_{3}(\nu )=w_{1}^{3}+w_{3}$;
\item[$\mathrm{(e)}$] $w_{4}(\nu )=w_{1}^{4}+w_{1}^{2}w_{2}$.
\end{enumerate}
\end{lemma}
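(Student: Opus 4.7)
The plan is to follow the template of Lemmas~\ref{l11}, \ref{l21}(c), and \ref{ll2}. Let $r\geq 3$ be the integer with $2^{r+1}<3n<2^{r+2}$; this is $\geq 3$ because $n\equiv 2\imod 8$ and $n\geq 3$ force $n\geq 10$. Set $m:=2^{r+1}-n-3$, so that $m\equiv 3\imod 8$ (since $2^{r+1}\equiv 0\imod 8$). Parts (b)--(e) will be read directly off formula~(\ref{o1}); part~(a) will require splitting into two sub-cases according to whether $n\geq 2^r$ or $n<2^r$.

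For (a), if $n\geq 2^r$ the mod~$8$ constraint forces $n\geq 2^r+2$, hence $2^{r+1}\leq 2n-4$ and the top potentially nonzero degree in (\ref{o1}), namely $6+3m=6\cdot 2^r-3n-3$, is at most $3n-15$. If $n<2^r$ then the mod~$8$ constraint forces $n\leq 2^r-6$, so (\ref{o2}) applies; since $3n>2^{r+1}$ gives $2n-2^r>0$, and $2n-2^r\equiv 4\imod 8$, one concludes $2n-2^r\geq 4$, whence $2^r\leq 2n-4$ and the top degree $3\cdot 2^r-3n-3$ is again $\leq 3n-15$. Either way $w_i(\nu)=0$ for $i\geq 3n-14$. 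For parts (b)--(e), Lucas' theorem applied to $m\equiv 3\imod 8$ (binary ending $\ldots011$) gives $\binom{m}{1}\equiv\binom{m}{2}\equiv\binom{m}{3}\equiv 1$ and $\binom{m}{4}\equiv 0\imod 2$; together with $m(m-1)\equiv 0$ and $3\binom{m}{3}\equiv 1\imod 2$, the degree-$\leq 4$ portion of $(1+w_1+w_2+w_3)^m$ modulo $2$ becomes $1+w_1+(w_1^2+w_2)+(w_1^3+w_3)+(w_1^2w_2+w_2^2)$. Multiplying by the first factor of (\ref{o1}), which contributes $1$ in degrees $\leq 3$ and adds $w_1^4+w_2^2$ in degree~$4$, yields exactly the identities~(b)--(e).

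The entire argument is essentially mechanical; the only real subtlety lies in~(a), where the sub-case $n<2^r$ requires one to extract $2n-2^r\geq 4$ from the mod~$8$ data rather than from a cruder inequality as in Lemmas~\ref{l11}(a) and~\ref{ll2}(a). Unlike Lemma~\ref{l21}(a)--(b), no Gr\"obner reductions are needed, since all cancellations already occur at the polynomial level before passing to $H^{*}(G_{3,n};\mathbb{Z}_2)$.
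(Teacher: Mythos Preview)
Your proof is correct and follows essentially the same approach as the paper. The only minor difference is in the sub-case $n<2^r$ of~(a): the paper uses the cruder bound $2^r\leq\tfrac{3n-1}{2}$ (from $3n\geq 2^{r+1}+1$) to conclude the top degree is $<3n-15$, whereas you extract the sharper $2^r\leq 2n-4$ from the mod~$8$ condition to get $\leq 3n-15$ directly---but both arguments reach the same conclusion, and parts (b)--(e) are identical in method.
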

\begin{proof} As before, let $r\geq 3$ be the integer such that $2^{r+1}<3n<2^{r+2}$.

If $n\geq 2^{r}$, i.e., $n\geq 2^{r}+2$, then $2^{r+1}\leq 2n-4$. The top class in the expression (\ref{o1}) is in degree $6+3\cdot (2^{r+1}-n-3)\leq 6+3\cdot (n-7)=3n-15$ and (a) follows in this case.

If $n<2^{r}$ then $n$ must be $<2^{r}-2$ (since $n\equiv 2\imod 8$), so formula (\ref{o2}) holds. The top class there is in degree $6+3\cdot (2^{r}-n-3)$ and, since $3n\geq 2^{r+1}+1$, we have that $2^{r}\leq \frac{3n-1}{2}$, implying $6+3\cdot (2^{r}-n-3)\leq 6+3\cdot \frac{n-7}{2}<6+3\cdot (n-7)=3n-15$. This proves (a).

Parts (b), (c), (d) and (e) we read off from formula (\ref{o1}) (using the fact that $2^{r+1}-n-3\equiv 3\imod 8$):
\[w_{1}(\nu )=(2^{r+1}-n-3)w_{1}=w_{1},\]
\[w_{2}(\nu )=\bigl(\begin{smallmatrix} 2^{r+1}-n-3\\ 2\end{smallmatrix}\bigr) w_{1}^{2}+(2^{r+1}-n-3)w_{2}=w_{1}^{2}+w_{2},\]
\[w_{3}(\nu )=\bigl(\begin{smallmatrix} 2^{r+1}-n-3\\ 3\end{smallmatrix}\bigr) w_{1}^{3}+\bigl(\begin{smallmatrix} 2^{r+1}-n-3\\ 2\end{smallmatrix}\bigr) \bigl(\begin{smallmatrix} 2\\ 1\end{smallmatrix}\bigr) w_{1}w_{2}+(2^{r+1}-n-3)w_{3}=w_{1}^{3}+w_{3},\]
\[w_{4}(\nu )=w_{1}^{4}+w_{2}^{2}+\bigl(\begin{smallmatrix} 2^{r+1}-n-3\\ 4\end{smallmatrix}\bigr) w_{1}^{4}+\bigl(\begin{smallmatrix} 2^{r+1}-n-3\\ 3\end{smallmatrix}\bigr) \bigl(\begin{smallmatrix} 3\\ 1\end{smallmatrix}\bigr) w_{1}^{2}w_{2}\]
\[+\bigl(\begin{smallmatrix} 2^{r+1}-n-3\\ 2\end{smallmatrix}\bigr) \bigl(\begin{smallmatrix} 2\\ 1\end{smallmatrix}\bigr) w_{1}w_{3}+\bigl(\begin{smallmatrix} 2^{r+1}-n-3\\ 2\end{smallmatrix}\bigr) w_{2}^{2}=w_{1}^{4}+w_{1}^{2}w_{2}\]
and the lemma follows.
\end{proof}

\begin{lemma}\label{l28} Let $n$ be an integer $\geq 3$ such that $n\equiv 2\imod 8$. Then, for the map $F_{1}:=(Sq^{2}+w_{1}(\nu )^{2}+w_{2}(\nu ))Sq^{1}:H^{3n-7}(G_{3,n};\mathbb{Z}_{2})\rightarrow H^{3n-4}(G_{3,n};\mathbb{Z}_{2})$ we have
\begin{enumerate}
\item[] $F_{1}(w_{1}^{3}w_{2}w_{3}^{n-4})=F_{1}(w_{1}^{2}w_{3}^{n-3})=w_{1}^{2}w_{3}^{n-2}+w_{1}w_{2}^{2}w_{3}^{n-3}$;
\item[] $F_{1}(w_{1}w_{2}^{5}w_{3}^{n-6})=F_{1}(w_{2}^{4}w_{3}^{n-5})=w_{2}^{4}w_{3}^{n-4}$;
\item[] $F_{1}(w_{2}^{7}w_{3}^{n-7})=F_{1}(w_{2}w_{3}^{n-3})=w_{2}w_{3}^{n-2}$.
\end{enumerate}
\end{lemma}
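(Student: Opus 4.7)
The plan is to simplify $F_{1}$ using Lemma \ref{l27} and then reduce everything to routine Steenrod-operation and Gr\"obner-basis calculations. By parts (b) and (c) of Lemma \ref{l27}, $w_{1}(\nu)^{2}+w_{2}(\nu) = w_{1}^{2}+(w_{1}^{2}+w_{2})=w_{2}$, so $F_{1}=(Sq^{2}+w_{2})\circ Sq^{1}$. From $n\equiv 2\imod 8$ I would record the parity facts that $n$, $n-2$, $4$, $6$ are even (hence zero mod $2$), that $n-1$, $n-3$, $5$, $7$ are odd, and that ${n\choose 2}$ is odd (since $n\equiv 2\imod 4$); these are the only arithmetic inputs needed.

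The second step is to apply formula (\ref{fsa1}) to each of the six input classes. The parity check yields $Sq^{1}(w_{1}^{3}w_{2}w_{3}^{n-4})=w_{1}^{3}w_{3}^{n-3}=Sq^{1}(w_{1}^{2}w_{3}^{n-3})$ and $Sq^{1}(w_{1}w_{2}^{5}w_{3}^{n-6})=w_{1}w_{2}^{4}w_{3}^{n-5}=Sq^{1}(w_{2}^{4}w_{3}^{n-5})$, which already makes the asserted $F_{1}$-equalities within the first two pairs automatic; for the third pair $Sq^{1}(w_{2}^{7}w_{3}^{n-7})=w_{2}^{6}w_{3}^{n-6}$ and $Sq^{1}(w_{2}w_{3}^{n-3})=w_{3}^{n-2}$, which do not agree and will have to be handled separately.

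The third step is to apply $(Sq^{2}+w_{2})$ to each intermediate class via (\ref{fsa2}) and then reduce using the Gr\"obner basis $G$ from Theorem \ref{t3}. For the first pair the result collapses to $w_{1}^{5}w_{3}^{n-3}$, and the reductions $w_{1}\cdot g_{0,n-3}=0$ and $g_{1,n-3}=0$ taken from Table \ref{tab1} send this to $w_{1}^{2}w_{3}^{n-2}+w_{1}w_{2}^{2}w_{3}^{n-3}$. For the second pair the result is $w_{1}^{3}w_{2}^{4}w_{3}^{n-5}$, and $w_{1}\cdot g_{4,n-5}=0$ together with $g_{5,n-5}=0$ reduce it to $w_{2}^{4}w_{3}^{n-4}$. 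For the easy half of the third pair, $(Sq^{2}+w_{2})(w_{3}^{n-2})=w_{2}w_{3}^{n-2}$ is immediate because every binomial coefficient entering the formula is even.

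The main obstacle will be the remaining computation $(Sq^{2}+w_{2})(w_{2}^{6}w_{3}^{n-6})$, which unwinds to $w_{1}^{2}w_{2}^{6}w_{3}^{n-6}+w_{2}^{7}w_{3}^{n-6}+w_{2}^{4}w_{3}^{n-4}$, whose first two monomials have exponent sum $n+2$ and therefore require reductions by Gr\"obner basis elements $g_{m,l}$ with $l<n-5$ that are not printed in Table \ref{tab1}. The plan here is to generate the needed $g_{m,l}$ by iterated application of the recursions (\ref{fff1})--(\ref{fff3}), in exactly the style of the derivations of $g_{8,n-9}$, $g_{10,n-9}$, $g_{10,n-10}$, $g_{12,n-12}$ listed after Table \ref{tab1}, and then reduce the above sum stepwise until only $w_{2}w_{3}^{n-2}$ remains, matching the value already obtained from $w_{2}w_{3}^{n-3}$.
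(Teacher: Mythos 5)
Your proposal is correct and follows essentially the same route as the paper: simplify $F_1 = (Sq^2+w_2)Sq^1$ via Lemma \ref{l27}, apply formulas (\ref{fsa1}) and (\ref{fsa2}), and reduce by the Gr\"obner basis — exactly what the paper does for the first equality before saying the rest is ``similar.'' Your observation that the third pair forces you off Table \ref{tab1} is accurate and worth knowing: indeed one finds $g_{5,n-6}=w_1^2w_2^5w_3^{n-6}+w_1w_2^4w_3^{n-5}+w_2^6w_3^{n-6}+w_3^{n-2}$, $g_{6,n-6}=w_1w_2^6w_3^{n-6}+w_1w_3^{n-2}$ and $g_{7,n-6}=w_2^7w_3^{n-6}+w_2^4w_3^{n-4}+w_1^2w_3^{n-2}+w_2w_3^{n-2}$ by (\ref{fff1})--(\ref{fff3}), and these reduce $w_1^2w_2^6w_3^{n-6}+w_2^7w_3^{n-6}+w_2^4w_3^{n-4}$ to $w_2w_3^{n-2}$ as required, confirming your plan closes.
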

\begin{proof} By Lemma \ref{l27}, $F_{1}=(Sq^{2}+w_{2})Sq^{1}$. According to (\ref{fsa1}), $Sq^{1}(w_{1}^{3}w_{2}w_{3}^{n-4})=nw_{1}^{4}w_{2}w_{3}^{n-4}+w_{1}^{3}w_{3}^{n-3}=w_{1}^{3}w_{3}^{n-3}$ and $Sq^{1}(w_{1}^{2}w_{3}^{n-3})=(n-1)w_{1}^{3}w_{3}^{n-3}=w_{1}^{3}w_{3}^{n-3}$ too. So,
\[F_{1}(w_{1}^{3}w_{2}w_{3}^{n-4})=F_{1}(w_{1}^{2}w_{3}^{n-3})=(Sq^{2}+w_{2})(w_{1}^{3}w_{3}^{n-3})\]
\[=Sq^{2}(w_{1}^{3}w_{3}^{n-3})+w_{1}^{3}w_{2}w_{3}^{n-3}={n\choose 2}w_{1}^{5}w_{3}^{n-3}+(n-3)w_{1}^{3}w_{2}w_{3}^{n-3}+w_{1}^{3}w_{2}w_{3}^{n-3}\]
\[=w_{1}^{5}w_{3}^{n-3}=w_{1}(g_{0,n-3}+w_{1}^{2}w_{2}w_{3}^{n-3}+w_{2}^{2}w_{3}^{n-3})
=w_{1}^{3}w_{2}w_{3}^{n-3}+w_{1}w_{2}^{2}w_{3}^{n-3}\]
\[=g_{1,n-3}+w_{1}^{2}w_{3}^{n-2}+w_{1}w_{2}^{2}w_{3}^{n-3}=w_{1}^{2}w_{3}^{n-2}+w_{1}w_{2}^{2}w_{3}^{n-3},\]
by (\ref{fsa2}) and Gr\"obner basis from Theorem \ref{t3}. The remaining equalities are proved similarly.
\end{proof}

\begin{lemma}\label{l29} Let $n$ be an integer $\geq 3$ such that $n\equiv 2\imod 8$. Then, for the map $D:=(Sq^{2}+w_{1}(\nu )^{2}+w_{2}(\nu )):H^{3n-4}(G_{3,n};\mathbb{Z}_{2})\rightarrow H^{3n-2}(G_{3,n};\mathbb{Z}_{2})$ we have
\begin{enumerate}
\item[] $D(w_{1}^{2}w_{3}^{n-2})=D(w_{1}w_{2}^{2}w_{3}^{n-3})=w_{2}^{2}w_{3}^{n-2}\neq 0$;
\item[] $D(w_{2}^{4}w_{3}^{n-4})=D(w_{2}w_{3}^{n-2})=0$.
\end{enumerate}
\end{lemma}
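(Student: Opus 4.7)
The plan is to simplify the operator $D$ first, then reduce each of the four values by applying formula (\ref{fsa2}) and the Gr\"obner basis from Table \ref{tab1}. By Lemma \ref{l27}(b) and (c), $w_{1}(\nu)^{2}+w_{2}(\nu)=w_{1}^{2}+(w_{1}^{2}+w_{2})=w_{2}$, so throughout the argument $D$ collapses to the much simpler operator $Sq^{2}+w_{2}$. This is the key preparatory step and it makes everything manageable.

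Next I would evaluate $D$ on each of the four generators using the Cartan-Wu formula (\ref{fsa2}), keeping in mind the binomial simplifications available from $n\equiv 2\imod 8$: in particular $\binom{n}{2}\equiv 1$, $\binom{n-1}{2}\equiv 0$, $n-1\equiv 1$ and $n$, $n-2$, $\binom{4}{2}$ are all even. After expansion, each result is a sum of monomials whose exponent-sum equals $n+1$, and so each can be rewritten modulo $I_{3,n}$ using the appropriate $g_{m,l}\in G$. Concretely I expect to need $g_{0,n-2}$, $g_{1,n-2}$ for the first class; $g_{2,n-3}$, $g_{3,n-3}$ for the second; $g_{4,n-4}$, $g_{5,n-4}$ for the third; and only $g_{m,l}$ already computed in Table \ref{tab1} for the fourth (in fact the fourth case reduces immediately because $Sq^{2}(w_{2}w_{3}^{n-2})$ and $w_{2}\cdot w_{2}w_{3}^{n-2}$ both become $w_{2}^{2}w_{3}^{n-2}$ and cancel).

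Nontriviality of the shared image $w_{2}^{2}w_{3}^{n-2}$ follows directly from Corollary \ref{c2}: the monomial has exponent-sum $n$ and so is a basis element of $H^{3n-2}(G_{3,n};\mathbb{Z}_{2})$.

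The work is purely mechanical; the only place where one has to be slightly careful is in the second case, where after computing $Sq^{2}$ one obtains $w_{1}^{3}w_{2}^{2}w_{3}^{n-3}+w_{1}w_{3}^{n-1}$ and must use $w_{1}g_{2,n-3}=0$ followed by $g_{3,n-3}=0$ to convert it into $w_{2}^{2}w_{3}^{n-2}$. This chained reduction, and the corresponding pair $w_{1}g_{4,n-4}=0$, $g_{5,n-4}=0$ in the third case, are the only substantive parts of the calculation; everything else is immediate from Table \ref{tab1}.
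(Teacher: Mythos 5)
Your proposal is correct and follows essentially the same route as the paper: reduce $D$ to $Sq^{2}+w_{2}$ via Lemma \ref{l27}, expand by the Wu--Cartan formula (\ref{fsa2}) using the $\bmod\ 2$ binomial coefficients dictated by $n\equiv 2\pmod 8$, and reduce modulo the Gr\"obner basis using exactly the $g_{m,l}$ you list (the paper writes out only the first two equalities, declaring the others analogous, and your sketch of the third and fourth cases is accurate). One tiny slip: in the second case the combination $w_{1}^{3}w_{2}^{2}w_{3}^{n-3}+w_{1}w_{3}^{n-1}$ appears after applying $D=Sq^{2}+w_{2}$, not after $Sq^{2}$ alone (the $w_{2}\cdot$ term cancels the $w_{1}w_{2}^{3}w_{3}^{n-3}$ coming from $Sq^{2}$), but the subsequent reduction via $w_{1}g_{2,n-3}$ then $g_{3,n-3}$ is exactly right.
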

\begin{proof} As for the previous lemma, we shall prove the first two equalities and omit the proof of the other two (since it is analogous). We have that $D=Sq^{2}+w_{2}$ (Lemma \ref{l27}) and so,
\[D(w_{1}^{2}w_{3}^{n-2})=Sq^{2}(w_{1}^{2}w_{3}^{n-2})+w_{1}^{2}w_{2}w_{3}^{n-2}={n\choose 2}w_{1}^{4}w_{3}^{n-2}+(n-2)w_{1}^{2}w_{2}w_{3}^{n-2}\]
\[+w_{1}^{2}w_{2}w_{3}^{n-2}=w_{1}^{4}w_{3}^{n-2}+w_{1}^{2}w_{2}w_{3}^{n-2}=w_{1}g_{0,n-2}+g_{1,n-2}+w_{2}^{2}w_{3}^{n-2}=w_{2}^{2}w_{3}^{n-2};\]

\[D(w_{1}w_{2}^{2}w_{3}^{n-3})=Sq^{2}(w_{1}w_{2}^{2}w_{3}^{n-3})+w_{1}w_{2}^{3}w_{3}^{n-3}\]
\[={n\choose 2}w_{1}^{3}w_{2}^{2}w_{3}^{n-3}+2(n-2)w_{1}^{2}w_{2}w_{3}^{n-2}+(n-1)w_{1}w_{2}^{3}w_{3}^{n-3}+{2\choose 2}w_{1}w_{3}^{n-1}\]
\[+w_{1}w_{2}^{3}w_{3}^{n-3}=w_{1}^{3}w_{2}^{2}w_{3}^{n-3}+w_{1}w_{3}^{n-1}=w_{1}g_{2,n-3}+w_{1}w_{2}^{3}w_{3}^{n-3}=w_{1}w_{2}^{3}w_{3}^{n-3}\]
\[=g_{3,n-3}+w_{2}^{2}w_{3}^{n-2}=w_{2}^{2}w_{3}^{n-2}.\]
The fact $w_{2}^{2}w_{3}^{n-2}\neq 0$ is a direct consequence of Corollary \ref{c2}.
\end{proof}

\begin{lemma}\label{l30} Let $n$ be an integer $\geq 3$ such that $n\equiv 2\imod 8$ and let $F_{2}$ be the map $[(Sq^{4}+w_{2}(\nu )^{2}+w_{4}(\nu ))Sq^{1}+(w_{1}(\nu )w_{2}(\nu )+w_{3}(\nu ))Sq^{2}+(w_{1}(\nu )^{2}+w_{2}(\nu ))Sq^{3}]:H^{3n-7}(G_{3,n};\mathbb{Z}_{2})\rightarrow H^{3n-2}(G_{3,n};\mathbb{Z}_{2})$. Then
\begin{enumerate}
\item[] $F_{2}(w_{1}^{3}w_{2}w_{3}^{n-4}+w_{1}^{2}w_{3}^{n-3})=w_{1}w_{3}^{n-1}+w_{2}^{2}w_{3}^{n-2}$;
\item[] $F_{2}(w_{1}w_{2}^{5}w_{3}^{n-6}+w_{2}^{4}w_{3}^{n-5})=0$;
\item[] $F_{2}(w_{2}^{7}w_{3}^{n-7}+w_{2}w_{3}^{n-3})=w_{2}^{2}w_{3}^{n-2}$.
\end{enumerate}
\end{lemma}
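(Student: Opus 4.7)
The plan is to follow the same template as Lemmas \ref{l22}, \ref{l24}, \ref{l26}, \ref{l28}, and \ref{l29}: substitute the explicit expressions from Lemma \ref{l27} into the definition of $F_2$, apply the Steenrod squares to each test class using \eqref{fsa1}, \eqref{fsa2}, and the Cartan formula, and then reduce the resulting polynomial modulo the Gröbner basis $G$ from Theorem \ref{t3}. Substituting $w_1(\nu)=w_1$, $w_2(\nu)=w_1^{2}+w_2$, $w_3(\nu)=w_1^{3}+w_3$, $w_4(\nu)=w_1^{4}+w_1^{2}w_2$ into the definition of $F_2$ and working mod $2$ yields
\[
F_2=(Sq^{4}+w_{1}^{2}w_{2}+w_{2}^{2})Sq^{1}+(w_{1}w_{2}+w_{3})Sq^{2}+w_{2}Sq^{3},
\]
since $w_{2}(\nu)^{2}+w_{4}(\nu)=w_{1}^{2}w_{2}+w_{2}^{2}$, $w_{1}(\nu)w_{2}(\nu)+w_{3}(\nu)=w_{1}w_{2}+w_{3}$, and $w_{1}(\nu)^{2}+w_{2}(\nu)=w_{2}$.

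Next, for each of the three arguments $x\in H^{3n-7}(G_{3,n};\mathbb{Z}_{2})$, I would compute $Sq^{1}x$, $Sq^{2}x$, and $Sq^{3}x$ separately. The operators $Sq^{1}$ and $Sq^{2}$ are given directly by \eqref{fsa1} and \eqref{fsa2}, while $Sq^{3}$ can be obtained either via the Adem relation $Sq^{3}=Sq^{1}Sq^{2}$ or by one application of the Cartan formula together with the observation already exploited in Lemma \ref{ll6} that $Sq^{m}(u^{2^{k}})$ vanishes unless $2^{k}\mid m$, in which case it equals $(Sq^{m/2^{k}}u)^{2^{k}}$; this is useful because monomials such as $w_{3}^{n-4}=w_{3}^{n-5}\cdot w_{3}$ and $w_{2}^{4}$ contain large pure powers. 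The class $Sq^{4}Sq^{1}x$ entering through the first summand of $F_2$ is handled the same way, either by combining $Sq^{4}$ via Cartan with $Sq^{1}x$, or, since $Sq^{4}=Sq^{1}Sq^{3}+Sq^{2}Sq^{2}$ by Adem, by iterating the already computed low-degree squares. Multiplication by the coefficient polynomials $Sq^{4}+w_{1}^{2}w_{2}+w_{2}^{2}$, $w_{1}w_{2}+w_{3}$, and $w_{2}$ is then purely algebraic.

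After expansion, all three results live in $H^{3n-2}(G_{3,n};\mathbb{Z}_{2})$, whose $\mathbb{Z}_{2}$-basis by Corollary \ref{c2} is $\{w_{1}w_{3}^{n-1},\,w_{2}^{2}w_{3}^{n-2}\}$. Reduction to this basis is carried out by subtracting off elements of the Gröbner basis whose leading monomials match the high-weight terms appearing in the expansion; the entries of Table \ref{tab1} together with the supplementary polynomials $g_{8,n-9}$, $g_{10,n-9}$, $g_{10,n-10}$, $g_{12,n-12}$ listed after it are exactly the ones one would expect to need, because the exponents occurring in the calculations of Lemmas \ref{l21}--\ref{l29} already exhausted all the similar leading monomials in this degree range. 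Once every monomial with $a+b+c=n+1$ has been removed, the remainder must coincide with the asserted right-hand side. The main obstacle is purely bookkeeping: the number of monomial summands produced by $F_2(x)$ is large and several reductions have to be chained, but every individual step is of the same kind already performed several times above, so no genuinely new ingredient is required.
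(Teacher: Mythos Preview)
Your plan is essentially the paper's own proof: simplify $F_2$ via Lemma~\ref{l27}, compute $Sq^1$, $Sq^2$, $Sq^3=Sq^1Sq^2$ on each argument with \eqref{fsa1}--\eqref{fsa2}, and reduce with the Gr\"obner basis from Table~\ref{tab1}. One small but genuine error: the relation $Sq^4=Sq^1Sq^3+Sq^2Sq^2$ you cite is false (Adem gives $Sq^1Sq^3=0$ and $Sq^2Sq^2=Sq^3Sq^1$), so that alternative route would not work; you must stick with the Cartan formula for the $Sq^4Sq^1$ term. In practice this term is less trouble than you anticipate, since for each of the three arguments $Sq^1$ either vanishes outright (first two cases, as already seen in the proof of Lemma~\ref{l28}) or yields something tame, and the reductions needed stay within Table~\ref{tab1} --- the extra polynomials $g_{8,n-9}$, $g_{10,n-9}$, $g_{10,n-10}$, $g_{12,n-12}$ are not required here.
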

\begin{proof} By Lemma \ref{l27}, $F_{2}=(Sq^{4}+w_{1}^{2}w_{2}+w_{2}^{2})Sq^{1}+(w_{1}w_{2}+w_{3})Sq^{2}+w_{2}Sq^{3}$. As we have already shown in the proof of Lemma \ref{l28}, $Sq^{1}(w_{1}^{3}w_{2}w_{3}^{n-4}+w_{1}^{2}w_{3}^{n-3})=0$.

For $Sq^{2}$, according to (\ref{fsa2}) we have:
\[Sq^{2}(w_{1}^{3}w_{2}w_{3}^{n-4}+w_{1}^{2}w_{3}^{n-3})={n\choose 2}w_{1}^{5}w_{2}w_{3}^{n-4}+(n-1)w_{1}^{4}w_{3}^{n-3}\]
\[+(n-3)w_{1}^{3}w_{2}^{2}w_{3}^{n-4}+{n-1\choose 2}w_{1}^{4}w_{3}^{n-3}+(n-3)w_{1}^{2}w_{2}w_{3}^{n-3}=w_{1}^{5}w_{2}w_{3}^{n-4}\]
\[+w_{1}^{4}w_{3}^{n-3}+w_{1}^{3}w_{2}^{2}w_{3}^{n-4}+w_{1}^{2}w_{2}w_{3}^{n-3}
=w_{1}(g_{1,n-4}+w_{2}^{3}w_{3}^{n-4}+w_{3}^{n-2})+w_{1}^{2}w_{2}w_{3}^{n-3}\]
\[=w_{1}^{2}w_{2}w_{3}^{n-3}+w_{1}w_{2}^{3}w_{3}^{n-4}+w_{1}w_{3}^{n-2}.\]

Since $Sq^{3}=Sq^{1}Sq^{2}$, we use the previous equality and (\ref{fsa1}) to calculate:
\[Sq^{3}(w_{1}^{3}w_{2}w_{3}^{n-4}+w_{1}^{2}w_{3}^{n-3})=Sq^{1}(w_{1}^{2}w_{2}w_{3}^{n-3}+w_{1}w_{2}^{3}w_{3}^{n-4}+w_{1}w_{3}^{n-2})\]
\[=nw_{1}^{3}w_{2}w_{3}^{n-3}+w_{1}^{2}w_{3}^{n-2}+nw_{1}^{2}w_{2}^{3}w_{3}^{n-4}+3w_{1}w_{2}^{2}w_{3}^{n-3}
+(n-1)w_{1}^{2}w_{3}^{n-2}=w_{1}w_{2}^{2}w_{3}^{n-3}.\]

By collecting all these facts, we obtain:
\[F_{2}(w_{1}^{3}w_{2}w_{3}^{n-4}+w_{1}^{2}w_{3}^{n-3})\]
\[=(w_{1}w_{2}+w_{3})(w_{1}^{2}w_{2}w_{3}^{n-3}+w_{1}w_{2}^{3}w_{3}^{n-4}+w_{1}w_{3}^{n-2})+w_{2}w_{1}w_{2}^{2}w_{3}^{n-3}\]
\[=w_{1}^{3}w_{2}^{2}w_{3}^{n-3}+w_{1}^{2}w_{2}^{4}w_{3}^{n-4}+w_{1}w_{3}^{n-1}=w_{1}(g_{2,n-3}+w_{2}^{3}w_{3}^{n-3}+w_{3}^{n-1})+w_{1}g_{4,n-4}\]
\[=w_{1}w_{2}^{3}w_{3}^{n-3}+w_{1}w_{3}^{n-1}=g_{3,n-3}+w_{2}^{2}w_{3}^{n-2}+w_{1}w_{3}^{n-1}
=w_{1}w_{3}^{n-1}+w_{2}^{2}w_{3}^{n-2}.\]

The proofs of the remaining equalities are similar.
\end{proof}

\begin{lemma}\label{l31} Let $n$ be an integer $\geq 3$, $n\equiv 2\imod 8$ and let $F_{3}$ be the map $[(Sq^{4}+w_{2}(\nu )^{2}+w_{4}(\nu ))Sq^{2}+(w_{1}(\nu )w_{2}(\nu )+w_{3}(\nu ))Sq^{3}]:H^{3n-7}(G_{3,n};\mathbb{Z}_{2})\rightarrow H^{3n-1}(G_{3,n};\mathbb{Z}_{2})$. Then
\[F_{3}(w_{1}w_{2}^{5}w_{3}^{n-6}+w_{2}^{4}w_{3}^{n-5})=w_{2}w_{3}^{n-1}\neq 0.\]
\end{lemma}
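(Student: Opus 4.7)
The plan is to reproduce the template of Lemmas \ref{l28}--\ref{l30}: substitute the Wu classes of $\nu$ into $F_3$, evaluate the Steenrod squares using $(\ref{fsa1})$, $(\ref{fsa2})$ and the Cartan formula, then reduce modulo the Gr\"obner basis $G$ of Theorem \ref{t3}. Nontriviality of the final answer follows from Corollary \ref{c2}, since $w_2w_3^{n-1}$ has sum of exponents equal to $n$ and so is a basis element of $H^{3n-1}(G_{3,n};\mathbb{Z}_{2})\cong \mathbb{Z}_{2}$.

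First I would simplify the operator. By Lemma \ref{l27}(c)--(e),
\[w_{2}(\nu)^{2}+w_{4}(\nu)=(w_{1}^{2}+w_{2})^{2}+w_{1}^{4}+w_{1}^{2}w_{2}=w_{1}^{2}w_{2}+w_{2}^{2},\]
\[w_{1}(\nu)w_{2}(\nu)+w_{3}(\nu)=w_{1}(w_{1}^{2}+w_{2})+w_{1}^{3}+w_{3}=w_{1}w_{2}+w_{3},\]
so $F_{3}=(Sq^{4}+w_{1}^{2}w_{2}+w_{2}^{2})Sq^{2}+(w_{1}w_{2}+w_{3})Sq^{3}$.

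Next, using $(\ref{fsa2})$ together with $n\equiv 2\pmod{8}$ (which makes $\binom{n}{2}\equiv 1$, $\binom{n-1}{2}\equiv 0$, $n-1$ and $n-5$ odd, $\binom{5}{2}$ and $\binom{4}{2}$ odd), I would write out $Sq^{2}(w_{1}w_{2}^{5}w_{3}^{n-6})$ and $Sq^{2}(w_{2}^{4}w_{3}^{n-5})$ explicitly as sums of a handful of monomials. I would then obtain $Sq^{3}(w_{1}w_{2}^{5}w_{3}^{n-6}+w_{2}^{4}w_{3}^{n-5})$ by applying $Sq^{1}$ to those results via $(\ref{fsa1})$, exactly as the author does in Lemma \ref{l30}.

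Then I would compute $Sq^{4}\bigl(Sq^{2}(w_{1}w_{2}^{5}w_{3}^{n-6}+w_{2}^{4}w_{3}^{n-5})\bigr)$ monomial by monomial, applying $Sq^{4}$ via the Cartan formula together with the Wu formulas $Sq^{1}w_{1}=w_{1}^{2}$, $Sq^{1}w_{2}=w_{1}w_{2}+w_{3}$, $Sq^{1}w_{3}=w_{1}w_{3}$, $Sq^{2}w_{2}=w_{2}^{2}$, $Sq^{2}w_{3}=w_{1}^{2}w_{3}+w_{2}w_{3}$, etc. (so that each $Sq^{4}w_{1}^{a}w_{2}^{b}w_{3}^{c}$ reduces to an explicit polynomial in $w_{1},w_{2},w_{3}$). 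I would simultaneously multiply the remaining pieces by $w_{1}^{2}w_{2}+w_{2}^{2}$ and by $w_{1}w_{2}+w_{3}$ respectively.

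Finally, I would collect all the resulting monomials and reduce them in $H^{*}(G_{3,n};\mathbb{Z}_{2})$ using the elements of $G$ tabulated in Table \ref{tab1} (together with the formulas $(\ref{fff1})$--$(\ref{fff3})$), exactly as was done throughout Lemmas \ref{l28}--\ref{l30}; the target cohomology group $H^{3n-1}(G_{3,n};\mathbb{Z}_{2})$ is one-dimensional with basis $w_{2}w_{3}^{n-1}$, so the whole sum collapses to $0$ or to $w_{2}w_{3}^{n-1}$, and I expect it to equal the latter. The main obstacle is purely bookkeeping: the $Sq^{4}Sq^{2}$ computation generates many terms and one has to be very careful with the binomial coefficients (mod $2$) and with repeatedly applying the Gr\"obner reductions to bring every monomial of total degree $>n$ into the canonical basis of Corollary \ref{c2}. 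No new conceptual ingredient beyond what was used in Lemmas \ref{l28}--\ref{l30} is required.
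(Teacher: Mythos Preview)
Your plan is correct and follows essentially the same route as the paper's own proof: simplify $F_{3}$ via Lemma~\ref{l27}, compute $Sq^{2}$ and $Sq^{3}=Sq^{1}Sq^{2}$ of the given class, apply $Sq^{4}$ via Cartan and Wu, and reduce everything with the Gr\"obner basis. The only practical refinement in the paper is that it first reduces $Sq^{2}(w_{1}w_{2}^{5}w_{3}^{n-6}+w_{2}^{4}w_{3}^{n-5})$ to $w_{2}^{5}w_{3}^{n-5}+w_{1}w_{3}^{n-2}$ before applying $Sq^{4}$, which makes the $Sq^{4}$ computation noticeably shorter (and in fact that piece vanishes); you may want to do the same.
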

\begin{proof} Again by Lemma \ref{l27}, $F_{3}=(Sq^{4}+w_{1}^{2}w_{2}+w_{2}^{2})Sq^{2}+(w_{1}w_{2}+w_{3})Sq^{3}$. In the same manner as in the previous proofs, one can show that:
\[Sq^{2}(w_{1}w_{2}^{5}w_{3}^{n-6}+w_{2}^{4}w_{3}^{n-5})=w_{2}^{5}w_{3}^{n-5}+w_{1}w_{3}^{n-2};\]
\[Sq^{3}(w_{1}w_{2}^{5}w_{3}^{n-6}+w_{2}^{4}w_{3}^{n-5})=w_{1}^{2}w_{3}^{n-2}+w_{2}^{4}w_{3}^{n-4}.\]
Let us now calculate $Sq^{4}(Sq^{2}(w_{1}w_{2}^{5}w_{3}^{n-6}+w_{2}^{4}w_{3}^{n-5}))$. By formulae of Cartan and Wu (and Gr\"obner basis $G$):
\[Sq^{4}(w_{2}^{5}w_{3}^{n-5}+w_{1}w_{3}^{n-2})=(w_{1}^{4}w_{2}^{5}+w_{2}w_{3}^{4})w_{3}^{n-5}
+w_{2}^{6}w_{2}w_{3}^{n-5}+(w_{1}w_{2}^{5}+w_{2}^{4}w_{3})w_{3}^{n-4}\]
\[+w_{2}^{5}w_{1}^{4}w_{3}^{n-5}
=w_{2}w_{3}^{n-1}+w_{2}^{7}w_{3}^{n-5}+w_{1}w_{2}^{5}w_{3}^{n-4}+w_{2}^{4}w_{3}^{n-3}=w_{2}g_{6,n-5}\]
\[+w_{1}(g_{5,n-4}+w_{1}w_{3}^{n-1})+g_{4,n-3}+w_{2}w_{3}^{n-1}=w_{1}^{2}w_{3}^{n-1}+w_{2}w_{3}^{n-1}=g_{0,n-1}=0.\]

Finally, we have that
\[F_{3}(w_{1}w_{2}^{5}w_{3}^{n-6}+w_{2}^{4}w_{3}^{n-5})\]
\[=(w_{1}^{2}w_{2}+w_{2}^{2})(w_{2}^{5}w_{3}^{n-5}+w_{1}w_{3}^{n-2})+(w_{1}w_{2}+w_{3})(w_{1}^{2}w_{3}^{n-2}+w_{2}^{4}w_{3}^{n-4})\]
\[=w_{1}^{2}w_{2}^{6}w_{3}^{n-5}
+w_{2}^{7}w_{3}^{n-5}+w_{1}w_{2}^{2}w_{3}^{n-2}+w_{1}w_{2}^{5}w_{3}^{n-4}+w_{1}^{2}w_{3}^{n-1}+w_{2}^{4}w_{3}^{n-3}\]
\[=w_{1}w_{2}g_{5,n-5}+w_{2}(g_{6,n-5}+w_{3}^{n-1})
+g_{2,n-2}+g_{0,n-1}+g_{4,n-3}=w_{2}w_{3}^{n-1}\]
and from Corollary \ref{c2}, we directly deduce that $w_{2}w_{3}^{n-1}\neq 0$.
\end{proof}

\begin{lemma}\label{l32} Let $n$ be an integer $\geq 3$ such that $n\equiv 2\imod 8$. Then, for the map $H:=(Sq^{2}+w_{1}(\nu )^{2}+w_{2}(\nu )):H^{3n-5}(G_{3,n};\mathbb{Z}_{2})\rightarrow H^{3n-3}(G_{3,n};\mathbb{Z}_{2})$ we have
\begin{enumerate}
\item[] $H(w_{1}^{2}w_{2}w_{3}^{n-3})=w_{2}^{3}w_{3}^{n-3}+w_{3}^{n-1}$;
\item[] $H(w_{1}w_{3}^{n-2})=w_{1}w_{2}w_{3}^{n-2}$;
\item[] $H(w_{2}^{2}w_{3}^{n-3})=w_{3}^{n-1}$.
\end{enumerate}
\end{lemma}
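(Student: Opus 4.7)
The plan is to first exploit Lemma \ref{l27}(b)-(c), which gives $w_1(\nu)=w_1$ and $w_2(\nu)=w_1^2+w_2$. Substituting, $w_1(\nu)^2+w_2(\nu)=w_1^2+(w_1^2+w_2)=w_2$, so the operator $H$ collapses to $Sq^2+w_2\cdot(-)$, with multiplication by a single cohomology class replacing the twisted term. This simplification — parallel to the one used in Lemmas \ref{l28}--\ref{l31} — reduces each of the three equalities to a routine calculation: compute $Sq^2$ of the given monomial via formula \eqref{fsa2}, add $w_2$ times the original monomial, and reduce the result modulo the Gr\"obner basis $G$ from Theorem \ref{t3} using Table \ref{tab1}.

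The mod-$2$ behavior of the binomial coefficients appearing in \eqref{fsa2} is controlled by $n\equiv 2\imod 8$, which forces $n$ even, $n-1$ odd, $n-2\equiv 0\imod 8$, and hence $\binom{n-1}{2}$ even while $\binom{n}{2}$ is odd. The second and third equalities should fall out almost immediately: for $w_1w_3^{n-2}$, both surviving coefficients in $Sq^2$ are even so $Sq^2(w_1w_3^{n-2})=0$ and only the $w_2\cdot w_1w_3^{n-2}=w_1w_2w_3^{n-2}$ term survives; for $w_2^2w_3^{n-3}$, the $w_2^3w_3^{n-3}$ contribution from $Sq^2$ cancels the added $w_2\cdot w_2^2w_3^{n-3}$, leaving only the $\binom{2}{2}w_3^{n-1}=w_3^{n-1}$ term from the last summand of \eqref{fsa2}.

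The first equality is where a genuine Gr\"obner reduction is needed. The output of $Sq^2(w_1^2w_2w_3^{n-3})+w_2\cdot w_1^2w_2w_3^{n-3}$ contains monomials of total degree $n+2$ and $n+1$, so it must be reduced to the basis of Corollary \ref{c2}. Reading from Table \ref{tab1}, the elements $g_{1,n-3}=w_1^3w_2w_3^{n-3}+w_1^2w_3^{n-2}$ and $g_{2,n-3}=w_1^2w_2^2w_3^{n-3}+w_2^3w_3^{n-3}+w_3^{n-1}$ are exactly what is needed: multiplying $g_{1,n-3}$ by $w_1$ eliminates the two highest-degree monomials produced by $Sq^2$, after which $g_{2,n-3}$ converts the leftover $w_1^2w_2^2w_3^{n-3}$ into the target expression $w_2^3w_3^{n-3}+w_3^{n-1}$.

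There is no real obstacle here — the whole argument is a finite bookkeeping exercise in parity of binomial coefficients followed by two explicit applications of Gr\"obner basis elements already tabulated. The structure mirrors the proof of Lemma \ref{l28}, and no new ideas are required beyond the simplification $w_1(\nu)^2+w_2(\nu)=w_2$ and the Gr\"obner data of Table \ref{tab1}.
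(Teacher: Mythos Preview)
Your proposal is correct and follows essentially the same approach as the paper: reduce $H$ to $Sq^{2}+w_{2}$ via Lemma~\ref{l27}, apply formula~\eqref{fsa2} with the parities forced by $n\equiv 2\pmod 8$, and then reduce using $w_{1}g_{1,n-3}$ and $g_{2,n-3}$ from Table~\ref{tab1}. The paper writes out only the first equality explicitly and declares the other two analogous; your treatment of those two is accurate and matches what the omitted computations would show.
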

\begin{proof} We prove the first equality only. $H=Sq^{2}+w_{2}$ (Lemma \ref{l27}) and so:
\[H(w_{1}^{2}w_{2}w_{3}^{n-3})=Sq^{2}(w_{1}^{2}w_{2}w_{3}^{n-3})+w_{1}^{2}w_{2}^{2}w_{3}^{n-3}={n\choose 2}w_{1}^{4}w_{2}w_{3}^{n-3}+(n-1)w_{1}^{3}w_{3}^{n-2}\]
\[+(n-2)w_{1}^{2}w_{2}^{2}w_{3}^{n-3}+w_{1}^{2}w_{2}^{2}w_{3}^{n-3}=w_{1}^{4}w_{2}w_{3}^{n-3}
+w_{1}^{3}w_{3}^{n-2}+w_{1}^{2}w_{2}^{2}w_{3}^{n-3}\]
\[=w_{1}g_{1,n-3}+g_{2,n-3}+w_{2}^{3}w_{3}^{n-3}+w_{3}^{n-1}=w_{2}^{3}w_{3}^{n-3}+w_{3}^{n-1},\]
by (\ref{fsa2}).
\end{proof}

\begin{lemma}\label{l33} Let $n\geq 3$, $n\equiv 2\imod 8$. In $H^{*}(G_{3,n};\mathbb{Z}_{2})$ the folowing equalities hold:
\begin{enumerate}
\item[] $Sq^{1}(w_{2}^{3}w_{3}^{n-3})=w_{2}^{2}w_{3}^{n-2}$;
\item[] $Sq^{1}(w_{3}^{n-1})=w_{1}w_{3}^{n-1}$.
\end{enumerate}
\end{lemma}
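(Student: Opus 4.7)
The plan is to obtain both equalities by a direct application of formula (\ref{fsa1}), using only the parity information $n\equiv 2\imod 8$. No reduction against the Gr\"obner basis $G$ will be needed, because the monomials that appear on the right-hand side already lie in the basis of $H^{*}(G_{3,n};\mathbb{Z}_{2})$ described in Corollary \ref{c2} (they have exponent sum exactly $n$).

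For the first equality I would plug $a=0$, $b=3$, $c=n-3$ into (\ref{fsa1}), getting
\[
Sq^{1}(w_{2}^{3}w_{3}^{n-3})=nw_{1}w_{2}^{3}w_{3}^{n-3}+3w_{2}^{2}w_{3}^{n-2}.
\]
Since $n\equiv 2\imod 8$ is even, the first summand vanishes mod $2$; and $3\equiv 1\imod 2$, so the right-hand side reduces to $w_{2}^{2}w_{3}^{n-2}$, which is indeed a basis element by Corollary \ref{c2}.

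For the second equality I would plug $a=b=0$, $c=n-1$ into (\ref{fsa1}), obtaining
\[
Sq^{1}(w_{3}^{n-1})=(n-1)w_{1}w_{3}^{n-1}+0.
\]
Now $n-1$ is odd (again because $n\equiv 2\imod 8$), so the coefficient is $1$ mod $2$ and the expression reduces to $w_{1}w_{3}^{n-1}$, which is a basis element.

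There is really no obstacle here; the only thing to check is the parity of $n$ and $n-1$, both of which are immediate from the hypothesis $n\equiv 2\imod 8$. The lemma is therefore an immediate consequence of the Wu/Cartan formula (\ref{fsa1}) together with Corollary \ref{c2}, which ensures that no further rewriting modulo $I_{3,n}$ is required.
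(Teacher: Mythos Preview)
Your proof is correct and follows exactly the same approach as the paper: a direct application of formula (\ref{fsa1}) together with the parity of $n$. The additional remark about Corollary \ref{c2} is accurate but not strictly needed, since the paper simply stops once the monomials $w_{2}^{2}w_{3}^{n-2}$ and $w_{1}w_{3}^{n-1}$ are obtained.
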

\begin{proof} By (\ref{fsa1}) we have:
\[Sq^{1}(w_{2}^{3}w_{3}^{n-3})=nw_{1}w_{2}^{3}w_{3}^{n-3}+3w_{2}^{2}w_{3}^{n-2}=w_{2}^{2}w_{3}^{n-2};\] 
\[Sq^{1}(w_{3}^{n-1})=(n-1)w_{1}w_{3}^{n-1}=w_{1}w_{3}^{n-1}\]
and we are done.
\end{proof}

Finally, we come to the proof of Theorem \ref{theorem4}.

\medskip

\noindent{\bf Proof of Theorem \ref{theorem4}.} As in the proofs of previous theorems, we are going to lift the classifying map $f_{\nu}:G_{3,n}\rightarrow BO$ up to $BO(3n-7)$. Since $n\equiv 2\imod 8$, we have that $3n\equiv 14\imod 8$, so $3n-7\equiv 7\imod 8$. This means that $3n$-MPT for the fibration $p:BO(3n-7)\rightarrow BO$ is of the following form.
\[\bfig
 \morphism<900,0>[G_{3,n}`BO;f_{\nu}]
 \morphism(900,0)<1100,0>[BO`K_{3n-6};w_{3n-6}]
 \morphism(900,500)|r|<0,-500>[E_{1}`BO;q_{1}]
 \morphism(900,500)<1100,0>[E_{1}`K_{3n-4}\times K_{3n-2}\times K_{3n-1};k_{1}^{2}\times k_{2}^{2}\times k_{3}^{2}]
 \morphism(900,1000)|r|<0,-500>[E_{2}`E_{1};q_{2}]
 \morphism(900,1000)<1100,0>[E_{2}`K_{3n-3}\times K_{3n-2};k_{1}^{3}\times k_{2}^{3}]
 \morphism(900,1500)|r|<0,-500>[E_{3}`E_{2};q_{3}]
 \morphism(900,1500)<1100,0>[E_{3}`K_{3n-2};k_{1}^{4}]
 \morphism(900,2000)|r|<0,-500>[E_{4}`E_{3};q_{4}]
 \morphism/-->/<900,500>[G_{3,n}`E_{1};g]
 \morphism/-->/<900,1000>[G_{3,n}`E_{2};h]
 \morphism/-->/<900,1500>[G_{3,n}`E_{3};l]
 \efig\]
The $k$-invariants are produced from the following relations.
\begin{table}[ht]
\label{eqtable}
\renewcommand\arraystretch{1.5}
\noindent\[
\begin{array}{|l|}
\hline
k_{1}^{2}: \quad (Sq^{2}+w_{1}^{2}+w_{2})Sq^{1}w_{3n-6}=0\\
\hline
k_{2}^{2}: \quad [(Sq^{4}+w_{2}^{2}+w_{4})Sq^{1}+(w_{1}w_{2}+w_{3})Sq^{2}+(w_{1}^{2}+w_{2})Sq^{3}]w_{3n-6}=0\\
\hline
k_{3}^{2}: \quad [(Sq^{4}+w_{2}^{2}+w_{4})Sq^{2}+(w_{1}w_{2}+w_{3})Sq^{3}]w_{3n-6}=0\\
\hline
k_{1}^{3}: \quad (Sq^{2}+w_{1}^{2}+w_{2})k_{1}^{2}=0\\
\hline
k_{2}^{3}: \quad (Sq^{2}Sq^{1}+w_{1}w_{2}+w_{3})k_{1}^{2}+Sq^{1}k_{2}^{2}=0\\
\hline
k_{1}^{4}: \quad (Sq^{2}+w_{1}^{2}+w_{2})k_{1}^{3}+Sq^{1}k_{2}^{3}=0\\
\hline
\end{array}
\]
\end{table}

We start by applying Lemma \ref{l27} (a): $f_{\nu}^{*}(w_{3n-6})=w_{3n-6}(\nu )=0$. Hence, there is a lifting $g_{1}:G_{3,n}\rightarrow E_{1}$ of $f_{\nu}$.

\medskip

Reasoning as before, if we take a class $\alpha \in H^{3n-7}(G_{3,n};\mathbb{Z}_{2})$, we get another lifting $g:G_{3,n}\rightarrow E_{1}$ such that the following relations hold (we use the notation from lemmas \ref{l28}-\ref{l31}):
\[g^{*}(k_{1}^{2})=g_{1}^{*}(k_{1}^{2})+F_{1}(\alpha ); \quad g^{*}(k_{2}^{2})=g_{1}^{*}(k_{2}^{2})+F_{2}(\alpha ); \quad g^{*}(k_{3}^{2})=g_{1}^{*}(k_{3}^{2})+F_{3}(\alpha ).\]

The $k$-invariant $k_{1}^{3}$ is produced by the relation $(Sq^{2}+w_{1}^{2}+w_{2})k_{1}^{2}=0$ which holds in $H^{*}(E_{1};\mathbb{Z}_{2})$. Pulling this relation back to $H^{*}(G_{3,n};\mathbb{Z}_{2})$ by $g_{1}^{*}$, we see that $g_{1}^{*}(k_{1}^{2})$ is in the kernel of the map $D$ (from Lemma \ref{l29}). According to Corollary \ref{c2}, the classes $w_{1}^{2}w_{3}^{n-2},w_{1}w_{2}^{2}w_{3}^{n-3},w_{2}^{4}w_{3}^{n-4}$ and $w_{2}w_{3}^{n-2}$ form a vector space basis for $H^{3n-4}(G_{3,n};\mathbb{Z}_{2})$ and by looking at lemmas \ref{l28} and \ref{l29}, one easily verifies that $\mathrm{ker}D\subseteq \mathrm{im}F_{1}$. This means that $g_{1}^{*}(k_{1}^{2})$ is in the image of the map $F_{1}$, so we can choose a class $\alpha$ such that $g^{*}(k_{1}^{2})=0$.

The group $H^{3n-2}(G_{3,n};\mathbb{Z}_{2})$ is (additively) generated by the classes $w_{1}w_{3}^{n-1}$ and $w_{2}^{2}w_{3}^{n-2}$ (Corollary \ref{c2}). By Lemma \ref{l30}, we can modify the class $\alpha$ by adding a class of the form $\alpha'=a(w_{1}^{3}w_{2}w_{3}^{n-4}+w_{1}^{2}w_{3}^{n-3})+b(w_{2}^{7}w_{3}^{n-7}+w_{2}w_{3}^{n-3})$, $a,b\in \{ 0,1\}$, and achieve the equality $g^{*}(k_{2}^{2})=0$. According to Lemma \ref{l28}, $\alpha'\in \mathrm{ker}F_{1}$, so the relation $g^{*}(k_{1}^{2})=0$ still holds.

If $g^{*}(k_{3}^{2})\neq 0$ in $H^{3n-1}(G_{3,n};\mathbb{Z}_{2})\cong \mathbb{Z}_{2}$ for this choice of $\alpha$, we modify $\alpha$ by adding the class $w_{1}w_{2}^{5}w_{3}^{n-6}+w_{2}^{4}w_{3}^{n-5}$. Lemmas \ref{l28}, \ref{l30} and \ref{l31} ensure that now we have $g^{*}(k_{1}^{2})=g^{*}(k_{2}^{2})=g^{*}(k_{3}^{2})=0$. Therefore, there is a lifting $h_{1}:G_{3,n}\rightarrow E_{2}$ of $f_{\nu}$.

\medskip

Again, by taking classes $\beta \in H^{3n-5}(G_{3,n};\mathbb{Z}_{2})$ and $\gamma \in H^{3n-3}(G_{3,n};\mathbb{Z}_{2})$, we obtain another lifting $h:G_{3,n}\rightarrow E_{2}$ and the following equalities hold (we use the notation from Lemma \ref{l32}):
\[h^{*}(k_{1}^{3})=h_{1}^{*}(k_{1}^{3})+H(\beta );\]
\[h^{*}(k_{2}^{3})=h_{1}^{*}(k_{1}^{3})+(Sq^{2}Sq^{1}+w_{1}(\nu )w_{2}(\nu )+w_{3}(\nu ))(\beta )+Sq^{1}\gamma .\]
Using the fact that the classes $w_{1}w_{2}w_{3}^{n-2},w_{2}^{3}w_{3}^{n-3}$ and $w_{3}^{n-1}$ form a vector space basis for $H^{3n-3}(G_{3,n};\mathbb{Z}_{2})$ (Corollary \ref{c2}), from Lemma \ref{l32} it is obvious that we can find a class $\beta$ such that $h_{1}^{*}(k_{1}^{3})=H(\beta )$. Also, according to Lemma \ref{l33} and Corollary \ref{c2}, by choosing appropriate class $\gamma$ (without changing $\beta$), one can obtain a lifting $h:G_{3,n}\rightarrow E_{2}$ with the property $h^{*}(k_{1}^{3})=h^{*}(k_{2}^{3})=0$, i.e., a lifting $h$ which lifts up to $E_{3}$.

\medskip

Finally, by looking at the relation that produces the $k$-invariant $k_{1}^{4}$ and according to Lemma \ref{l33} and Corollary \ref{c2} again, one observes that the indeterminacy of $k_{1}^{4}$ is all of $H^{3n-2}(G_{3,n};\mathbb{Z}_{2})$, so there is a lifting $l:G_{3,n}\rightarrow E_{3}$ of $f_{\nu}$ which lifts up to $E_{4}$. This concludes the proof of the theorem.     \hfill $\Box$

\bibliographystyle{amsplain}

\end{document}